\providecommand{\algorithmname}{Algorithm}
\theoremstyle{definition}
      \newtheorem{defn}{\protect\definitionname}
      \newtheorem{defn}{\protect\definitionname}[chapter]
\theoremstyle{plain}
      \newtheorem{assumption}{\protect\assumptionname}
      \newtheorem{assumption}{\protect\assumptionname}[chapter]
\theoremstyle{plain}
      \newtheorem{prop}{\protect\propositionname}
      \newtheorem{prop}{\protect\propositionname}[chapter]
\theoremstyle{plain}
	    \newtheorem{thm}{\protect\theoremname}
      \newtheorem{thm}{\protect\theoremname}[chapter]
\theoremstyle{plain}
  \newtheorem{cor}{\protect\corollaryname}
      \newtheorem{cor}{\protect\corollaryname}[chapter]
\setlist{leftmargin=*, topsep=0.5em, parsep=0pt, itemsep=1em, labelindent=0pt, align=left}
\providecommand{\assumptionname}{Assumption}
\providecommand{\corollaryname}{Corollary}
\providecommand{\definitionname}{Definition}
\providecommand{\propositionname}{Proposition}
\providecommand{\theoremname}{Theorem}
\providecommand{\keywords}[1]{\textit{Keywords:} #1}
\providecommand{\AMS}[1]{AMS subject classifications: #1}
\begin{document}
\title{Robust utility maximization under model uncertainty via a penalization
approach}
\author[1,2]{Ivan Guo}
\author[3]{Nicolas Langren\'e}
\author[1,2,4]{Gr\'egoire Loeper}
\author[1]{Wei Ning}

\affil[1]{\small School of Mathematical Sciences, Monash University, Melbourne, Australia}
\affil[2]{\small Centre for Quantitative Finance and Investment Strategies, Monash University, Australia}
\affil[3]{\small Data61, Commonwealth Scientific and Industrial Research Organisation, RiskLab Australia}
\affil[4]{\small BNP Paribas Global Markets}

\date{First version: July 31, 2019\\
This revised version: July 3, 2020}

\maketitle
\begin{abstract}
This paper addresses the problem of utility maximization under uncertain parameters. In contrast with the classical  approach, where the parameters of the model evolve freely within
a given range, we constrain them via a penalty function. We show that this
robust optimization process can be interpreted as a two-player zero-sum
stochastic differential game. We prove that the value function satisfies
the Dynamic Programming Principle and that it is the unique viscosity
solution of an associated Hamilton--Jacobi--Bellman--Isaacs equation.
We test this robust algorithm on real market data.
The results show that robust portfolios generally have higher expected utilities and are more stable under strong market downturns.
To solve for the value function, we derive an analytical solution in the logarithmic utility case and
obtain accurate numerical approximations in the general case by three
methods: finite difference method, Monte Carlo simulation, and Generative Adversarial Networks.  
\end{abstract}
\keywords{robust portfolio optimization, differential games, HJBI
equation, Monte Carlo, GANs}

\AMS{49N90, 49K35, 49K20, 49L20, 49L25, 91G80}

\section{Introduction\label{sec:Introduction}}

This paper addresses the problem of continuous-time utility maximization.
Besides the choice of utility function, a key element in the formulation
of such a problem is the a~priori knowledge assumed for the evolution
of the underlying assets (e.g., the expected returns and the quadratic
covariation of the diffusion process). In a landmark paper, \citet{merton1969lifetime}
found an explicit solution for the problem of optimal portfolio selection
and consumption, for a constant relative risk aversion (CRRA) utility
function $\frac{X^{\gamma}}{\gamma}$, $\gamma\in(0,1)$ (a.k.a. power
utility or isoelastic utility). He found that the optimal fraction
of the wealth to be invested in the risky asset is given by $\pi^{*}=\frac{\mu-r}{\sigma^{2}(1-\gamma)}$\footnote{Here, $\mu$ is the expected rate of asset returns, $\sigma^{2}$
is the variance of the asset returns, $r$ is the risk-free interest
rate and $1-\gamma$ is the relative risk aversion constant.}, which is independent of both time and the current wealth, even though
this quantity is a~priori allowed to evolve dynamically. This conclusion
is arguably one of the most important results in portfolio optimization
(and it is also consistent with the results of Markowitz portfolio
optimization \citealt{markowitz1952portfolio}). It has led to various
extensions, some of which are illustrated in the textbook by \citet{rogers2013optimal}.

In the original Merton problem, the evolution of the risky asset,
although stochastic by essence, is governed by the Black-Scholes model
\citep{black1973pricing} with fixed parameters $\mu,r$ and $\sigma$.
This is a very simplistic model for the underlying asset price. Stochastic
models (for the volatility and interest rates) that describe the price
evolution more realistically have later emerged. Several papers have
addressed the problem in this context: \citet{matoussi2015robust}
examined the case of stochastic volatility, while \citet{noh2011optimal}
addressed the case of stochastic interest rates. The expected return
(or drift) $\mu$ plays an essential role in the optimal allocation;
even when it is considered stochastic, it is still assumed to be an
\textit{observable} input of the problem. This assumption clearly
does not match the reality that investors are facing. Several works
by \citet{lakner1995utility} and then \citet{bel2017forecasting}
addressed the utility maximization problem with an uncertain drift,
although it was assumed to follow some form of prescribed dynamics
or prior distribution.

Two decades ago, the concept of \textit{robust portfolio optimization}
had emerged. It was first introduced in the operations research literature
by \citet{el1997robust} and \citet{ben1998robust}. Instead of assuming
a model with a known drift, interest rate or volatility, the problem
of \textit{robust} optimal allocation assumes that they will evolve
dynamically in the most unfavourable way within a given range. The
resulting allocation process tends to be more stable and less vulnerable
to changes and misspecifications in model parameters.

There has been a substantial amount of literature on robust portfolio
optimization over the last decade and the area is still developing.
A comprehensive introduction of the trends and methods can be found
in the book by \citet{fabozzi2007robust}. \citet{gabrel2014recent}
provided an overview of advances in robust optimization, including
but not limited to applications in finance, where they stated that
``robustifying'' stochastic optimization is one of the key advancements
that should develop following the 2007 financial crisis. We list below
a few pieces of influential research in this direction. For instance,
\citet{elliott2009robust} supposed that an agent wants to maximize
the minimal utility function, over a family of probability measures.
This problem was then formulated as a Markovian regime-switching model,
where the market parameters are modulated by a continuous-time finite-state
Markov chain that is determined by the probability measures. \citet{glasserman2013robust}
went beyond parameter uncertainties to consider the effect of changes
in the probability distributions that define an underlying model.
They used relative entropy to quantify the deviation of the worst-case
model from a baseline model. \citet{fouque2016portfolio} studied
an asset allocation problem with stochastic volatility and uncertain
correlation, and derived closed-form solutions for a class of utility
functions. \citet{ismail2019robust} studied a robust Markowitz portfolio
selection problem under covariance uncertainty. The value function
is obtained by optimizing the worst-case mean-variance functional,
over the admissible investing strategies $\alpha$. They then solved
this problem by the McKean-Vlasov dynamic programming approach and
characterized the solution with a Bellman-Isaacs PDE. They also illustrated
the robust efficient frontier in two examples: uncertain volatilities
and uncertain correlation. Last but not least, we also mention the
work by \citet{talay2002worst}, which studied the robust optimization
problem in the context of derivatives hedging.

A robust investment process can be interpreted as a two-player game.
On one hand, the market can be thought of as an adversarial player
controlling the volatility (or the drift) in order to minimize the
gains of an investor, on the other hand, the investor, who controls
the allocation of the portfolio, is trying to maximize her gains under
the worst possible behaviour of the market. The two controllers have
conflicting interests, with the gain of one player being a loss for
the other. Hence we call this competition between the investor and
the market a two-player zero-sum stochastic differential game (SDG).
Differential games were first introduced by \citet{isaacs1965differential};
the book by \citet{fleming2006controlled} provides a concise introduction
to the theory of viscosity solutions and deterministic zero-sum differential
games. The first complete theory for two-player zero-sum SDGs was
developed by \citet{fleming1989existence}, where they proved the
existence of value functions of the games. \citet{buckdahn2008stochastic}
generalized the results of \citet{fleming1989existence} by considering
the gain functional as a solution of a Backward Stochastic Differential
Equation (BSDE). With the help of BSDE methods, they proved the Dynamic
Programming Principle (DPP) for the value functions in a more straightforward
approach. Some more recent works on zero-sum SDG
include \citet{hernandez2018zero}, \citet{baltas2019robust} and \citet{cosso2019zero}.

The main novelty of our work is threefold. Firstly, we do not assume a given range
of parameters in the evolution of the underlying process. 
In other papers considering uncertain volatility, the authors assume the admissible $\sigma\in[\sigma_{\min},\sigma_{\max}]$,
where $\sigma_{\min}$ and $\sigma_{\max}$ are model bounds in
accordance with the uncertainty about future fluctuations.
Instead, we allow the parameters to move freely and use a penalty
function $F=F(r,\mu,\sigma,\hdots)$ to penalize unrealistic values
of the parameters. Mathematically speaking, the penalty function gives
some \textit{coercivity} to the problem so that an optimal solution
can be found. This approach has been used for robust derivatives pricing
in \citet{tan2013optimal} and \citet{guo2017local}. Note that one
can asymptotically recover the aforementioned approaches that involve
a \textit{fixed parameter range}, by taking the penalty function $F$
to be 0 over a given set and $+\infty$ outside.

Secondly, in the classical papers studying two-player zero-sum
SDGs, \citet{fleming1989existence} and \citet{nisio2015stochastic}
made the assumptions that the domain is bounded and the utility function
$U$ is bounded and Lipschitz continuous. 
The present paper extends these results to more general assumptions
by considering an unbounded domain and an unbounded utility function
$U$. Moreover, we prove that the lower- and upper-value of the SDG
\eqref{eq:dynamic of X}-\eqref{eq:gain function} in fact coincide.

Last but not least, we devise two innovative algorithms to compute the value functions, 
which are control randomization and Generative Adversarial Networks (GANs). 
In particular, it is, to our knowledge,
the first application of the control randomization method (see \citealt{kharroubi2014numerical})
in the context of a robust portfolio optimization problem. It is also the first time GANs are used to solve 
a robust optimization problem in the field of quantitative finance. 

GANs are an exciting recent innovation in machine learning. 
The fundamental principle of GANs is to use two different neural networks as two opponents with conflicting goals, and its solution is a Nash equilibrium. Hence, GANs training is closely related to game theory. \citet{cao2020connecting} reviewed the minimax structures underlying GANs, and they established theoretical connections between GANs and Mean-Field Games. However, there are few applications of GANs in quantitative finance so far. The only relevant work is by \citet{wiese2020quant}. Being inspired by GANs' ability to generate images,  they approximated a realistic asset price simulator using adversarial training techniques.

The rest of the paper is organized as follows. In \prettyref{sec:Problem-Formulation},
we formulate a portfolio optimization problem in a robust setting
and introduce the uncertain drift and uncertain volatility processes.
In the subsequent sections, we only focus on the uncertain volatility
case because the uncertain drift case can be solved in a similar way.
In \prettyref{sec:The-classical-results}, we define the value functions
for static games and two-player zero-sum SDGs. In \prettyref{sec:Existence-of-saddle}
we show that the differential game has a saddle point and as a consequence,
the lower- and upper-values of the SDG coincide. We prove that the
value function satisfies the DPP in \prettyref{sec:The-Dynamic-Programming}
and that our value function is the unique viscosity solution of an
HJBI equation in \prettyref{sec:Viscosity-solution-HJBI}. In section \prettyref{subsec:Analytical-Solution}, we derive a closed-form solution for the logarithmic utility. 
In section \prettyref{subsec:Monte Carlo comparison}, we add some noise to the covariance matrix and simulate portfolios with robust and non-robust strategies, respectively. Then, in section \prettyref{subsec:Empirical Experiment}, we test our robust mechanism by constructing two empirical portfolios using market data. 
In section \prettyref{subsec:Finite-Difference-Method} and  \prettyref{subsec:Monte-Carlo-Method}, we provide numerical results
for general utility functions using PDE techniques via finite difference
methods  and Monte Carlo simulations via control randomization. Finally, in section \prettyref{subsec:GANs}, we present the algorithm and result of solving a robust portfolio optimization problem with GANs. 

\section{Problem formulation\label{sec:Problem-Formulation}}

We consider a portfolio with $d$ risky assets and one risk-free asset
compounding at a constant interest rate $r\in\mathbb{R}$. The price
process of the risky assets is denoted by $S_{t}\in\mathbb{R}^{d}$
$(0\leq t\leq T)$, and the $i$th element of $S_{t}$ follows the
dynamics 
\begin{equation}
\frac{dS_{t}^{i}}{S_{t}^{i}}=\mu_{t}^{i}dt+\sum_{j=1}^{d}\sigma_{t}^{ij}dW_{t}^{j},\quad1\leq i\leq d,\label{eq:dynamic of S}
\end{equation}
with drift $\mu_{t}\in\mathbb{R}^{d}$, covariance matrix $\Sigma_{t}\in\mathbb{R}^{d\times d}$
and its square-root matrix $\sigma_{t}\coloneqq\Sigma_{t}^{\frac{1}{2}}\in\mathbb{R}^{d\times d}$.

We consider a  probability space
$\left(\Omega,\mathcal{F},\mathbb{P}\right)$, and processes $\mu,\Sigma$ which are progressively measurable with respect to the $\mathbb{P}$-augmented filtration of the $d$-dimensional Brownian motion $W_t$

Let $X_{t}\in\mathbb{R}$ be the value of the portfolio at time $t$.
A portfolio allocation strategy $\alpha_{t}\in\mathbb{R}^{d}$ represents
the proportion of total wealth the agent invests in the $d$ risky
assets at time $t$, and $1-\sum_{i=1}^{d}\alpha_{t}^{i}$ is the
proportion invested in the risk-free asset.

Assuming the strategy is self-financed, the wealth process evolves
as follows 
\[
\frac{dX_{t}}{X_{t}}=\sum_{i=1}^{d}\alpha_{t}^{i}\frac{dS_{t}^{i}}{S_{t}^{i}}+\left(1-\sum_{i=1}^{d}\alpha_{t}^{i}\right)rdt.
\]
We define $\mathbf{r}\coloneqq r\times\mathbb{\mathbf{1}}$ with $\mathbf{1}\in\mathbb{R}^{d}$
being a $d$-dimensional ones vector. The wealth evolution can be
rewritten as 
\begin{equation}
dX_{t}=X_{t}(\alpha_{t}^{\intercal}(\mu_{t}-\mathbf{r})+r)dt+X_{t}\alpha_{t}^{\intercal}\sigma_{t}dW_{t}.\label{eq:dynamic of X}
\end{equation}

We will follow the framework set in \citet{fleming1989existence}
and \citet{talay2002worst}.  We first introduce the canonical sample spaces for the underlying Brownian motion in  \eqref{eq:dynamic of S} and  \eqref{eq:dynamic of X}.
For each $t\in[0,T]$, we set
\[
\Omega_t \coloneqq(\omega\in C([t,T];\mathbb{R}^{d}):\omega_{t}=0).
\]
We denote by $\mathbb{F} = \mathcal{F}_{t,s}$ $ (s \in [t,T]) $, the filtration generated by the canonical process from time $t$ to time $s$. Equipped with the Wiener measure $\mathbb{P}_t$ on $\mathcal{F}_{t,T}$, the filtered probability space $(\Omega_t, \mathcal{F}_{t,T}, \mathbb{P}_t, \mathbb{F} )$ is the canonical sample space, and $W$ is the standard $d$-dimensional Brownian motion.

Now, we introduce the concept of \textit{admissible controls}. 
\begin{defn}
\label{def: An-admissible-vol}An admissible control process $\Sigma$
(resp. $\mu$) for the market on $[t,T]$ is a progressively measurable
process with respect to $\mathbb{F}$, taking values in a compact
convex set $B\subset\mathbb{S}^{d}$ (resp. $M\subset\mathbb{R}^{d}$),
where $\mathbb{S}^{d}\subset\mathbb{R}^{d\times d}$ is a set of symmetric
positive semi-definite matrices. The set of all admissible $\Sigma$
(resp. $\mu$) on $[t,T]$ is compact and convex, denoted by $\mathcal{B}$
(resp. $\mathcal{M}$). 
\end{defn}
\begin{defn}
\label{def: An-admissible-alpha}An admissible control process $\alpha$
for the investor on $[t,T]$ is a progressively measurable process
with respect to $\mathbb{F}$, taking values in a compact convex set
$A\subset\mathbb{R}^{d}$. The set of all admissible $\alpha$ is
compact and convex, denoted by $\mathcal{A}$.
\end{defn}
Note that although the sets for the value of the controls
are compact, in practice, $A=[-R,R]^{d},B=[-R,R]^{d\times d}\cap\mathbb{S}^{d}$
where $R$ is arbitrarily large.

Next, let us define the payoff function as the expectation of a terminal utility
function $U$ plus a penalty function $F$: 
\begin{equation}
J(t,x,\alpha,\mu,\Sigma)=\mathbb{E}^{t,x}\left[U(X_{T}^{\alpha,\mu,{\scriptscriptstyle \Sigma}})+\lambda_{0}\int_{t}^{T}F(\mu_{s},\Sigma_{s})ds\right],\label{eq:gain function}
\end{equation}
where $\mathbb{E}^{t,x}(\cdot)$ denotes
the expectation given the initial time and wealth $(t,x)\in[0,T]\times\mathbb{R}$.
and $\lambda_{0}\in\mathbb{R}$ is a positive constant. Throughout the paper, we will often include $\alpha,\mu$ and $\Sigma$
in the superscript of $X$ to indicate the dependency of the wealth
process on the allocation, drift and volatility processes. Our objective
is to find the optimal portfolio allocation process $\alpha$ that
maximizes the worst-case payoff function given by the drift process
$\mu$ or the covariance process $\Sigma$. Throughout the paper,
$F$ will be a convex function in $\Sigma_{s}$ and $\mu_{s}$. 

\subsection{Robust value functions\label{subsec:Uncertain-Volatility}}

We are now ready to define the value functions. In our problem, the
covariance (or drift) is unknown. We want to find the optimal portfolio
allocation process that maximizes the worst-case situation given by
the covariance (or drift). Then, given an initial condition $(t,x)\in[0,T]\times\mathbb{R}$,
this value is given by 
\[
\underline{u}(t,x)=\sup_{\alpha\in\mathcal{A}}\inf_{\Sigma\in\mathcal{B},\mu\in\mathcal{M}}\left\{ \mathbb{{E}}^{t,x}\left[U(X_{T}^{\alpha,\mu,{\scriptscriptstyle \Sigma}})+\lambda_{0}\int_{t}^{T}F(\mu_{s},\Sigma_{s})ds\right]\right\} .
\]
We say $\hat{\alpha}$ and $\hat{\Sigma},\hat{\mu}$ are optimal controls
if $\underline{u}(t,x)=J(t,x,\hat{\alpha},\hat{\mu},\hat{\Sigma})=\inf_{\Sigma\in\mathcal{B},\mu\in\mathcal{M}}J(t,x,\hat{\alpha},\mu,\Sigma)$.
Hereafter, we focus on the robust optimization problem with an uncertain
covariance, that is, 
\begin{equation}
\underline{u}(t,x)=\adjustlimits\sup_{\alpha\in\mathcal{A}}\inf_{\Sigma\in\mathcal{B}}\left\{ \mathbb{{E}}^{t,x}\left[U(X_{T}^{\alpha,{\scriptscriptstyle \Sigma}})+\lambda_{0}\int_{t}^{T}F(\Sigma_{s})ds\right]\right\} ,\label{eq:value function}
\end{equation}
because the uncertain drift case can be studied in a similar manner.

This problem is known as a \textit{static game}, and the function
$\underline{u}(t,x)$ is called the \textit{lower value of the static
game}. If we reverse the moving order of the two players, we obtain
the \textit{upper value of the static game}, which is 
\begin{equation}
\bar{u}(t,x)=\adjustlimits\inf_{\Sigma\in\mathcal{B}}\sup_{\alpha\in\mathcal{A}}\left\{ \mathbb{{E}}^{t,x}\left[U(X_{T}^{\alpha,{\scriptscriptstyle \Sigma}})+\lambda_{0}\int_{t}^{T}F(\Sigma_{s})ds\right]\right\} .\label{eq:u_bar}
\end{equation}

Note that $X_{s}^{\alpha,{\scriptscriptstyle \Sigma}},\forall s\in[t,T]$
denotes a process controlled by processes $\alpha,\Sigma$. When $X_{s}^{\alpha,{\scriptscriptstyle \Sigma}}$
starts from an initial condition $(t,x)$, we write the expectation
of $f(X_{s}^{\alpha,{\scriptscriptstyle \Sigma}})$ as $\mathbb{{E}}^{t,x}\left[f(X_{s}^{\alpha,{\scriptscriptstyle \Sigma}})\right]$.

\subsection{Assumptions}

In this section, we make the following assumptions which will hold
throughout the paper. \begin{assumption} \textcolor{black}{\label{assu:U}
The utility function $U:\mathbb{R}\rightarrow\mathbb{R}$ }is a continuous,
increasing and concave function such that 
\begin{equation}
\Bigl|U(x)-U(\bar{x})\Bigr|\leq Q(\left|x\right|,\left|\bar{x}\right|)\left|x-\bar{x}\right|,\label{eq:polyassumption}
\end{equation}
where $Q(\left|x\right|,\left|\bar{x}\right|)$ is a positive polynomial
function. \end{assumption} \begin{assumption} \label{assu:F convex}\textcolor{black}{T}he
penalty function {$F:B\rightarrow\mathbb{R}$} is a continuous convex
function, and $F$ attains its minimum in the interior of $B$. \end{assumption}

In addition to Definition \prettyref{def: An-admissible-vol}
and \prettyref{def: An-admissible-alpha}, we need the following conditions
to ensure the existence and uniqueness of a strong solution of the
SDE \eqref{eq:dynamic of X}.

\begin{assumption} \label{assu:finiteness}For any $\Sigma_{s,s\in[t,T]}\in B$
and $\alpha_{s,s\in[t,T]}\in A$, we have 
\[
\mathbb{E}\Bigl[\int_{t}^{T}\Bigl|F(\Sigma_{s})\Bigr|ds\Bigr]<\infty,
\]
and for any fixed value $x_{0}$, 
\[
\mathbb{E}\Bigl[\int_{t}^{T}\left|(\alpha_{s}^{\intercal}\mu+r-\alpha_{s}^{\intercal}\mathbf{r})x_{0}\right|^{2}+\left|{\color{black}{\color{black}{\color{brown}{\color{black}\alpha_{s}^{\intercal}\sigma_{s}x_{0}}}}}\right|^{2}ds\Bigr]<\infty.
\]
\end{assumption}

\textcolor{blue}{}%

\section{\textcolor{black}{Value functions of} two-player zero-sum stochastic
differential games\label{sec:The-classical-results}}

In order to complete the description of the game, we need to clarify
what information is available to the controllers at each time $s$.
For multi-stage discrete time games this can be formulated inductively.
However, this is problematic in continuous time, because control choices
can be changed instantaneously \citep[Chapter 11]{fleming2006controlled}.
To address this issue, \citet{fleming1989existence} adopted the idea
of a progressive strategy in a two-player zero-sum SDG, which is defined
as follows: 
\begin{defn}
An admissible strategy $\Gamma$ (resp. $\Delta$) for the investor
(resp. market) on $[t,T]$ is a mapping $\Gamma:\mathcal{B}\rightarrow\mathcal{A}$
(resp. $\Delta:\mathcal{A}\rightarrow\mathcal{B}$ ) such that, for
any $s\in[t,T]$ and $\Sigma,\tilde{\Sigma}\in\mathcal{B}$ (resp.
$\alpha,\tilde{\alpha}\in\mathcal{A}$), $\Sigma(u)=\tilde{\Sigma}(u)$
(resp. $\alpha(u)=\tilde{\alpha}(u)$) for all $u\in[t,s]$ implies
$\Gamma(\Sigma)(u)=\Gamma(\tilde{\Sigma})(u)$ (resp. $\Delta(\alpha)(u)=\Delta(\tilde{\alpha})(u)$)
for all $u\in[t,s]$. The set of all admissible strategies for the
investor (resp. market) on $[t,T]$ is denoted by $\mathcal{N}$ (resp.
$\mathcal{M}$). 
\end{defn}
In the two-player zero-sum SDG, one player is allowed to strategically
adapt his control according to the control of his opponent in a non-anticipative
fashion. This is in contrast to the static game, in which the player
must choose his control without any knowledge of the opponent's choice.
Then, we may define another set of value functions using these admissible
strategies: the \textit{upper value function} of the two-player zero-sum
SDG is defined by 
\begin{equation}
\bar{v}(t,x)=\adjustlimits\sup_{\Gamma\in\mathcal{\mathcal{N}}}\inf_{\Sigma\in\mathcal{B}}\Bigl\{\mathbb{E}^{t,x}\Bigl[\lambda_{0}\int_{t}^{T}F(\Sigma_{s})ds+U(X_{T}^{{\scriptscriptstyle \Gamma,\Sigma}})\Bigr]\Bigr\},\label{eq:upper}
\end{equation}
and the corresponding \textit{lower value function} is 
\begin{equation}
\underline{v}(t,x)=\adjustlimits\inf_{\Delta\in\mathcal{M}}\sup_{\alpha\in\mathcal{A}}\Bigl\{\mathbb{E}^{t,x}\Bigl[\lambda_{0}\int_{t}^{T}F(\Delta_{s})ds+U(X_{T}^{\alpha,{\scriptscriptstyle \Delta}})\Bigr]\Bigr\}.\label{eq:lower}
\end{equation}

The terms ``lower'' and ``upper'' are not obvious at first glance,
one might first guess the opposite because $\inf\sup\geq\sup\inf$.
We will justify $\underline{v}\leq\bar{v}$ in Corollary \prettyref{cor:u<v}
using the comparison principle.

\section{Existence of a value for the differential games\label{sec:Existence-of-saddle}}

In this section, we prove that the four value functions defined in
the previous sections all coincide, i.e., $\underline{u}(t,x)=\underline{v}(t,x)=\bar{v}(t,x)=\bar{u}(t,x)$.
This is established via the following propositions. 
\begin{prop}
\label{prop001} The four value functions defined in \prettyref{sec:Problem-Formulation}
and \prettyref{sec:The-classical-results} satisfy the following inequalities:
\begin{equation}
\underline{u}(t,x)\leq\underline{v}(t,x)\leq\bar{v}(t,x)\leq\bar{u}(t,x).\label{eq:value order}
\end{equation}
\end{prop}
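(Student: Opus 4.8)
The plan is to break \eqref{eq:value order} into its three links and to settle the two outer ones, $\underline u\le\underline v$ and $\bar v\le\bar u$, by elementary manipulations of suprema and infima, leaving the middle one for later. The single structural fact needed for the outer links is that feeding an admissible control of one player into an admissible strategy of the other again produces an admissible control of the latter's opponent: for $\Delta\in\mathcal{M}$ and $\alpha\in\mathcal{A}$ one has $\Delta(\alpha)\in\mathcal{B}$, and for $\Gamma\in\mathcal{N}$ and $\Sigma\in\mathcal{B}$ one has $\Gamma(\Sigma)\in\mathcal{A}$, both by \prettyref{def: An-admissible-vol} and \prettyref{def: An-admissible-alpha}. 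For the first link I would fix $\Delta\in\mathcal{M}$; since $\Delta(\alpha)$ is one of the competitors in the inner infimum defining $\underline u$, we get $\inf_{\Sigma\in\mathcal{B}}J(t,x,\alpha,\Sigma)\le J(t,x,\alpha,\Delta(\alpha))$ for every $\alpha\in\mathcal{A}$, and taking $\sup_{\alpha}$ then $\inf_{\Delta}$ yields $\underline u(t,x)\le\underline v(t,x)$. The third link is the mirror image: for fixed $\Gamma\in\mathcal{N}$ and any $\Sigma\in\mathcal{B}$, $\inf_{\Sigma'\in\mathcal{B}}J(t,x,\Gamma(\Sigma'),\Sigma')\le J(t,x,\Gamma(\Sigma),\Sigma)\le\sup_{\alpha\in\mathcal{A}}J(t,x,\alpha,\Sigma)$, and taking $\inf_{\Sigma}$ then $\sup_{\Gamma}$ gives $\bar v(t,x)\le\bar u(t,x)$. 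The only thing to check along the way is that all the payoffs appearing are finite, so that the suprema and infima are meaningful, which follows from \prettyref{assu:U} and \prettyref{assu:finiteness} together with the compactness of $A$ and $B$; progressive measurability is automatically preserved under these compositions.

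The middle link $\underline v\le\bar v$ is the substantive one, and I do not expect to get it from sup/inf gymnastics: the heuristic $\inf\sup\ge\sup\inf$ does not apply here, since $\underline v$ and $\bar v$ are built over different strategy classes, and the classical device of matching two non-anticipative strategies along a common realization is delicate in this setting (and, when it can be carried out, tends to give $\bar v\le\underline v$, the opposite of what we want). Instead I would obtain $\underline v\le\bar v$ from the analytic theory developed in \prettyref{sec:The-Dynamic-Programming} and \prettyref{sec:Viscosity-solution-HJBI}: once the dynamic programming principle is available, $\underline v$ and $\bar v$ are shown to be a viscosity sub- and a viscosity supersolution, respectively, of \emph{one and the same} HJBI equation with terminal condition $U$. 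What makes the lower and upper Isaacs equations coincide is the structure of the Hamiltonian of \eqref{eq:dynamic of X}: it depends on $\Sigma$ only through the term $\tfrac12 x^2\,\alpha^{\intercal}\Sigma\alpha$ coming from the diffusion, hence is affine in $\Sigma$ up to the added penalty, which is convex by \prettyref{assu:F convex}; so it is convex in $\Sigma$, and --- on the region where the solution is concave in $x$, which is forced by the concavity of $U$ in \prettyref{assu:U} --- concave in $\alpha$, so Sion's minimax theorem over the compact convex sets $A$ and $B$ furnishes the Isaacs condition $\sup_\alpha\inf_\Sigma=\inf_\Sigma\sup_\alpha$. The comparison principle for the resulting equation then yields $\underline v\le\bar v$, which is recorded as \prettyref{cor:u<v}; combining it with the two elementary links completes \eqref{eq:value order}.

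The step I expect to be the main obstacle is exactly this middle inequality: it is the only one of the three that is not a formal manipulation of suprema and infima, and making it rigorous is precisely what forces the whole programme --- the DPP, the viscosity characterization of $\underline v$ and $\bar v$, verification of the Isaacs condition, and the comparison principle. Everything else in the proof of \prettyref{prop001} is routine bookkeeping: keeping track of admissibility and progressive measurability across the compositions of strategies with controls, and invoking \prettyref{assu:U}, \prettyref{assu:F convex} and \prettyref{assu:finiteness} so that the expectations and the suprema and infima are all well defined.
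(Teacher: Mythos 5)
Your proposal follows essentially the same route as the paper: the outer inequalities $\underline{u}\leq\underline{v}$ and $\bar{v}\leq\bar{u}$ are obtained by the same elementary argument of evaluating a strategy at a control (the paper phrases the first link through an $\epsilon$-optimal $\bar{\Delta}$, which is equivalent to your direct estimate), and the substantive middle inequality $\underline{v}\leq\bar{v}$ is deferred to Corollary \prettyref{cor:u<v}, exactly as the paper does. One minor remark on your sketch of that corollary: the paper does not need the Isaacs condition or Sion's minimax theorem there, since it only uses the trivial pointwise inequality $H\geq\tilde{H}$ between the upper and lower Hamiltonians (so that $\bar{v}$, a supersolution for $H$, is automatically a supersolution for $\tilde{H}$) together with the comparison principle against $\underline{v}$; arguing via equality of the two Hamiltonians on a concavity region, as you suggest, is more than is required and would be delicate to justify at the viscosity-solution level, where test functions need not be concave in $x$.
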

\begin{proof}
The inequality $\underline{v}(t,x)\leq\bar{u}(t,x)$ holds because
$\mathcal{M}$ contains constant mappings, i.e., $\Delta(\alpha)=\Sigma$
for any $\alpha\in\mathcal{A}$ and fixed $\Sigma\in\mathcal{B}$.
Similarly, $\underline{u}(t,x)\leq\bar{v}(t,x)$ holds because $\mathcal{N}$
contains a copy of $\mathcal{A}$. Then for all $\alpha\in\mathcal{A}$
and $\epsilon>0$, there exists some $\bar{\Delta}$ such that 
\[
\inf_{\Delta\in\mathcal{M}}\sup_{\alpha\in\mathcal{A}}J\left(t,x,\alpha,\Delta(\alpha)\right)+\epsilon\geq\sup_{\alpha\in\mathcal{A}}J\left(t,x,\alpha,\bar{\Delta}(\alpha)\right)\geq J\left(t,x,\alpha,\bar{\Delta}(\alpha)\right)\geq\inf_{\Sigma\in\mathcal{B}}J(t,x,\alpha,\Sigma).
\]
So $\underline{u}(t,x)\leq\underline{v}(t,x)$. A similar argument
gives us $\bar{v}(t,x)\leq\bar{u}(t,x)$. Hence we have 
\[
\underline{u}\leq\underline{v}\leq\bar{u},\qquad\underline{u}\leq\bar{v}\leq\bar{u}.
\]
In order to complete the proof, it suffices to show that $\underline{v}(t,x)\leq\bar{v}(t,x)$.
This is proven in Corollary \prettyref{cor:u<v}. 
\end{proof}

\begin{prop}
\label{prop:concave in alpha}Let $U$ be a continuous, increasing
and concave utility function on $\mathbb{R}$, suppose that Assumption~\prettyref{assu:F convex}
holds, then $\underline{u}(t,x)=\underline{v}(t,x)=\bar{v}(t,x)=\bar{u}(t,x)$. 
\end{prop}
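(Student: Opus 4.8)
The plan is to reduce the statement, using Proposition~\ref{prop001}, to the single reverse inequality $\bar{u}(t,x)\le\underline{u}(t,x)$: since that proposition already gives $\underline{u}\le\underline{v}\le\bar{v}\le\bar{u}$ (the link $\underline{v}\le\bar{v}$ being Corollary~\prettyref{cor:u<v}), the inequality $\bar{u}\le\underline{u}$ collapses the whole chain. I would establish $\bar{u}\le\underline{u}$ by producing a saddle point, that is, admissible controls $(\hat{\alpha},\hat{\Sigma})$ with
\[
J(t,x,\alpha,\hat{\Sigma})\;\le\;J(t,x,\hat{\alpha},\hat{\Sigma})\;\le\;J(t,x,\hat{\alpha},\Sigma)\qquad\text{for all }\alpha\in\mathcal{A},\ \Sigma\in\mathcal{B},
\]
since then $\bar{u}=\inf_{\Sigma}\sup_{\alpha}J=J(t,x,\hat{\alpha},\hat{\Sigma})=\sup_{\alpha}\inf_{\Sigma}J=\underline{u}$, and all four values coincide.

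The concavity in the name of the proposition is not that of $J$ in $\alpha$ (which need not hold) but that of the Hamiltonian of the associated HJBI equation,
\[
H(t,x,p,M,\alpha,\Sigma)=x\bigl(\alpha^{\intercal}(\mu-\mathbf{r})+r\bigr)p+\tfrac{1}{2}x^{2}\alpha^{\intercal}\Sigma\alpha\,M+\lambda_{0}F(\Sigma).
\]
When $M\le0$ and $\Sigma\in\mathbb{S}^{d}$, the map $\alpha\mapsto H$ is concave --- affine in $\alpha$, plus $\tfrac{1}{2}x^{2}M$ (a nonpositive number) times the convex quadratic $\alpha^{\intercal}\Sigma\alpha$ --- while $\Sigma\mapsto H$ is convex for every $\alpha$, being affine in $\Sigma$ plus $\lambda_{0}F(\Sigma)$, convex by Assumption~\prettyref{assu:F convex}. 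Since $A$ and $B$ are compact and convex, Sion's minimax theorem then supplies both the Isaacs identity $\sup_{\alpha}\inf_{\Sigma}H=\inf_{\Sigma}\sup_{\alpha}H$ and a saddle of $H$ for every $(t,x,p,M)$ with $M\le0$. That $M\le0$ is the only relevant case follows once one knows the value functions are concave in $x$: rewriting \eqref{eq:dynamic of X} in terms of the dollar amounts $\pi_{s}:=X_{s}\alpha_{s}$ makes it affine in the pair $(X,\pi)$, so for admissible strategies $\Gamma^{1},\Gamma^{2}$ and $\theta\in[0,1]$ the ``amount-blended'' strategy, whose fraction process $(\theta\pi^{1}+(1-\theta)\pi^{2})/(\theta X^{1}+(1-\theta)X^{2})$ is a convex combination of the two fractions and hence admissible since $A$ is convex, produces wealth $\theta X^{1}+(1-\theta)X^{2}$; concavity of $U$ then yields the concavity of $\bar{v}(t,\cdot)$ and $\underline{v}(t,\cdot)$ in $x$.

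With the Isaacs identity in hand, the dynamic programming principle of Section~\prettyref{sec:The-Dynamic-Programming} and the viscosity-solution analysis of Section~\prettyref{sec:Viscosity-solution-HJBI} apply: $\underline{v}$ and $\bar{v}$ solve one and the same HJBI equation, so by the comparison principle $\underline{v}=\bar{v}=:V$, concave in $x$. Choosing the Hamiltonian saddle $(\hat{\alpha}(t,x),\hat{\Sigma}(t,x))$ measurably along $V$ and feeding it into \eqref{eq:dynamic of X}, a verification argument against $V$ gives $J(t,x,\hat{\alpha},\Sigma)\ge V\ge J(t,x,\alpha,\hat{\Sigma})$ for all $\alpha,\Sigma$ with equality at $(\hat{\alpha},\hat{\Sigma})$, so $\underline{u}\ge V$ and $\bar{u}\le V$; combined with Proposition~\ref{prop001} this closes the chain $\underline{u}=\underline{v}=\bar{v}=\bar{u}$. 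I expect the verification step to be the main obstacle: in the absence of a priori regularity of $V$ one must either extract enough smoothness or argue with the sub/superjets of the viscosity solution, give a rigorous meaning to the feedback-controlled SDE, and justify the It\^{o}--Dynkin computation, and the Hamiltonian saddle must also be selected measurably in $(t,x)$. The groundwork that makes this go through --- the DPP and the viscosity characterization under an unbounded domain and an unbounded $U$ --- is precisely what Sections~\prettyref{sec:The-Dynamic-Programming}--\prettyref{sec:Viscosity-solution-HJBI} provide.
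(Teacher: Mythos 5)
Your reduction to $\bar{u}\le\underline{u}$ and the identification of the concave--convex structure of the Hamiltonian are the right ingredients, but the decisive step of your plan --- the verification argument that selects a Hamiltonian saddle $(\hat{\alpha}(t,x),\hat{\Sigma}(t,x))$ measurably, feeds it back into \eqref{eq:dynamic of X}, and compares against the (merely continuous, viscosity) value $V$ --- is left as an acknowledged obstacle rather than carried out, and that is precisely where the content of the proposition lies; as written the proposal does not establish $\bar{u}\le\underline{u}$. Two further points would need repair even if you completed it. First, Sion's theorem gives $\sup_{\mathbf{a}}\inf_{\mathbf{\Sigma}}=\inf_{\mathbf{\Sigma}}\sup_{\mathbf{a}}$ for the Hamiltonian only when $M\le 0$, but in the viscosity subsolution property the test function touching from above may have a positive second derivative at the contact point even though the value function is concave in $x$ (e.g.\ $\phi(x)=x^{2}$ touches $v(x)=-x^{2}$ from above at $0$); so ``$\underline{v}$ and $\bar{v}$ solve one and the same HJBI equation'' does not follow from concavity of $V$ alone and needs an argument at the level of the relevant jets. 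Second, the closing sandwich conflates open-loop controls with feedback: the realized process $s\mapsto\hat{\alpha}(s,X_{s})$ is a \emph{different} element of $\mathcal{A}$ for each $\Sigma$, so the bound $J(t,x,\hat{\alpha},\Sigma)\ge V$ for all $\Sigma$ controls the strategy value $\bar{v}$ rather than directly $\underline{u}=\sup_{\alpha\in\mathcal{A}}\inf_{\Sigma\in\mathcal{B}}J$, and symmetrically for $\bar{u}\le V$; an extra step is needed here.

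For comparison, the paper's proof takes a shorter route that avoids any verification theorem. For a fixed admissible $\Sigma$, the one-player value $q(t,x)=\sup_{\alpha\in\mathcal{A}}\mathbb{E}^{t,x}[U(X_{T}^{\alpha,\Sigma})+\lambda_{0}\int_{t}^{T}F(\Sigma_{s})ds]$ is concave in $x$ by the classical argument of \citet[p.52]{pham2009continuous}; with $q_{xx}\le 0$ the instantaneous Lagrangian $L$ is concave in $\alpha$ and convex in $\Sigma$ (Assumption \prettyref{assu:F convex}), so a pointwise saddle $(\alpha^{*},\Sigma^{*})$ exists by a minimax theorem \citep[Theorem 49.A]{zeidler2013nonlinear}, see \eqref{eq:saddle point}. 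Freezing the opponent at this saddle, both $\sup_{\alpha}J(t,x,\alpha,\Sigma^{*})$ and $\inf_{\Sigma}J(t,x,\alpha^{*},\Sigma)$ are viscosity solutions of the \emph{same} one-control HJB equation \eqref{eq:inf HJB}, hence equal by uniqueness; the elementary chain \eqref{eq:inequality} then yields $\bar{u}\le\underline{u}$, and Proposition \prettyref{prop001} closes all four equalities. Thus the paper uses concavity of the fixed-$\Sigma$ value rather than of the game value, and replaces your measurable selection, feedback SDE well-posedness and It\^{o}--Dynkin computation by a uniqueness-of-viscosity-solutions comparison for standard single-control problems. If you wish to keep your route, substituting that uniqueness argument for the verification step is the natural way to fill the hole.
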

\begin{proof}
See Appendix \ref{sec:Appendix-proof in section 4}.
\end{proof}

Using Proposition \prettyref{prop:concave in alpha}, we can conclude
that there exists a value for the two-player zero-sum SDG, i.e., $\underline{v}=\bar{v}$.
We focus on the analysis of $\bar{v}(t,x)$ in the following sections.

\section{Dynamic programming principle \label{sec:The-Dynamic-Programming}}

If the drift and volatility functions of dynamics \eqref{eq:dynamic of X}
and the utility function $U$ were bounded and $U$ was Lipschitz
continuous, we could apply the results of \citet{fleming1989existence}
directly. However, in our model, the drift and volatility functions
are unbounded and $U$ is only locally Lipschitz continuous. So we
must extend the classical results and use localization techniques
to prove that the value function $\bar{v}(t,x)$ defined in \eqref{eq:upper}
satisfies the Dynamic Programming Principle (DPP). The DPP is widely
used in numerical methods, such as the least squares Monte Carlo method.

Before presenting the main result, we require the following important
property of the value function. 
\begin{prop}
\label{prop:continuous in x}Suppose that Assumptions \prettyref{assu:U}
and \prettyref{assu:finiteness} hold true. Then the value function
$\bar{v}(t,x)$ \eqref{eq:upper} is locally Lipchitz continuous w.r.t
$x$. There exists a positive polynomial function $\Phi$ such that
\begin{equation}
\Bigl|\bar{v}(t,x)-\bar{v}(t,\bar{x})\Bigr|\leq\Phi(\left|x\right|,\left|\bar{x}\right|)\left|x-\bar{x}\right|,\quad\forall(t,x)\in[0,T]\times\mathbb{R}.\label{eq:value function polynomial growth}
\end{equation}
\end{prop}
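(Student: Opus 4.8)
The plan is to establish the local Lipschitz estimate \eqref{eq:value function polynomial growth} by propagating the corresponding estimate on the utility function $U$ through the wealth dynamics and then through the $\sup$/$\inf$ operations defining $\bar v$. The starting point is the observation that $\sup$ and $\inf$ are nonexpansive: for any family of functions $\{g_i\}$ and $\{h_i\}$ on the same index set one has $|\sup_i g_i - \sup_i h_i| \le \sup_i |g_i - h_i|$, and likewise for $\inf$. Applying this twice to \eqref{eq:upper} (first to $\inf_{\Sigma\in\mathcal B}$, then to $\sup_{\Gamma\in\mathcal N}$) reduces the problem to bounding, uniformly over admissible $\Gamma$ and $\Sigma$,
\[
\Bigl| \mathbb{E}^{t,x}\bigl[U(X_T^{\Gamma,\Sigma})\bigr] - \mathbb{E}^{t,\bar x}\bigl[U(X_T^{\Gamma,\Sigma})\bigr]\Bigr|,
\]
since the penalty term $\lambda_0\int_t^T F(\Sigma_s)\,ds$ does not depend on $x$ and cancels. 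By Assumption~\prettyref{assu:U}, $|U(X_T^{t,x}) - U(X_T^{t,\bar x})| \le Q(|X_T^{t,x}|,|X_T^{t,\bar x}|)\,|X_T^{t,x} - X_T^{t,\bar x}|$, so after taking expectations and applying the Cauchy--Schwarz inequality it suffices to control two things: (i) the $L^2$ (or higher $L^p$) norm of the difference $X_T^{t,x} - X_T^{t,\bar x}$, and (ii) the $L^p$ norms of $X_T^{t,x}$ and $X_T^{t,\bar x}$ themselves, which feed into the polynomial $Q$ via Hölder's inequality.

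For both ingredients I would exploit the key structural fact that \eqref{eq:dynamic of X} is \emph{linear} in $X$: writing $b_s \coloneqq \alpha_s^\intercal(\mu_s - \mathbf r) + r$ and $\varsigma_s \coloneqq \sigma_s^\intercal \alpha_s \in \mathbb{R}^d$, the solution has the explicit exponential form $X_s^{t,x} = x\,\mathcal{E}_s$ where $\mathcal E_s = \exp\bigl(\int_t^s (b_u - \tfrac12|\varsigma_u|^2)\,du + \int_t^s \varsigma_u^\intercal dW_u\bigr)$ is a positive process \emph{independent of the initial value $x$}. Consequently $X_T^{t,x} - X_T^{t,\bar x} = (x - \bar x)\,\mathcal E_T$ and $X_T^{t,x} = x\,\mathcal E_T$ exactly. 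Because the controls take values in the compact sets $A$ and $B$ (Definitions~\prettyref{def: An-admissible-vol} and \prettyref{def: An-admissible-alpha}), the integrands $b_u$ and $|\varsigma_u|^2$ are bounded by a deterministic constant depending only on $R$, $r$, $T$ and the radius of $B$; hence all moments $\mathbb{E}[\mathcal E_T^{\,p}] \le C_p$ are finite and bounded \emph{uniformly} over all admissible $(\alpha,\Sigma)$ — this is exactly where the compactness assumptions do their work, and it is the analogue of the boundedness hypotheses used by \citet{fleming1989existence} that we are replacing. (Alternatively, if one prefers not to invoke the closed-form solution, the same bounds follow from standard $L^p$ estimates for linear SDEs together with Gronwall's inequality, using Assumption~\prettyref{assu:finiteness} to justify that the processes are genuine martingales.)

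Putting the pieces together: by Hölder with conjugate exponents chosen according to the degree of $Q$,
\[
\mathbb{E}^{t,x}\bigl[Q(|X_T^{t,x}|,|X_T^{t,\bar x}|)\,|X_T^{t,x}-X_T^{t,\bar x}|\bigr] \le |x-\bar x|\,\bigl\|Q(|x|\mathcal E_T,|\bar x|\mathcal E_T)\bigr\|_{L^{q}}\,\|\mathcal E_T\|_{L^{q'}},
\]
and since $Q$ is a polynomial, $\|Q(|x|\mathcal E_T,|\bar x|\mathcal E_T)\|_{L^q}$ is itself a polynomial in $|x|$ and $|\bar x|$ with coefficients depending only on the uniform moment bounds $C_p$. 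Defining $\Phi(|x|,|\bar x|)$ to be this polynomial times $\|\mathcal E_T\|_{L^{q'}}$ yields \eqref{eq:value function polynomial growth}, uniformly in $t$, $\Gamma$ and $\Sigma$; taking $\sup_\Gamma \inf_\Sigma$ and using nonexpansiveness finishes the argument. The only mildly delicate point — the part I would be most careful about — is making sure the polynomial $\Phi$ does not depend on the controls, which forces one to extract the moment bounds on $\mathcal E_T$ in a control-uniform way; once the compactness of $A$ and $B$ is used to bound the drift and diffusion coefficients pathwise, this is routine, but it is the step that genuinely uses the hypotheses and that distinguishes the unbounded-$U$ setting here from the classical bounded case.
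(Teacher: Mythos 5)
Your proposal is correct and follows essentially the same route as the paper's proof: reduce via the nonexpansiveness of $\sup$/$\inf$ to a per-control estimate, apply Assumption~\prettyref{assu:U} with Cauchy--Schwarz/H\"older, and close with control-uniform moment bounds on the wealth process. Your use of the explicit exponential form $X_T^{t,x}=x\,\mathcal{E}_T$ just makes exact the linear-SDE/Gronwall-type difference and moment estimates that the paper invokes as standard (via \citet[Theorem 1.3.15]{pham2009continuous}), so it is a minor streamlining rather than a different argument.
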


\begin{proof}
See Appendix \ref{sec:Appendix-proof in section 5}.
\end{proof}

We are now in the position to present a main result in this paper. 
\begin{thm}[Dynamic Programming Principle]
\label{thm:The-Dynamic-Programming}

Suppose that Assumptions \prettyref{assu:U}, \prettyref{assu:F convex}
and \prettyref{assu:finiteness} hold true. Define the value function
$\bar{v}(t,x)$ by \eqref{eq:upper} for $(t,x)\in[0,T]\times\mathbb{R}$. Let $t+\theta$ be a stopping time, 
then, for $t\leq t+\theta\leq T$, we have 
\begin{equation}
\bar{v}(t,x)=\adjustlimits\sup_{\Gamma\in\mathcal{N}}\inf_{\Sigma\in\mathcal{B}}\Bigl\{\mathbb{E}^{t,x}\Bigl[\lambda_{0}\int_{t}^{t+\theta}F(\Sigma_{s})ds+\bar{v}(t+\theta,X_{t+\theta}^{{\scriptscriptstyle \Gamma,\Sigma}})\Bigr]\Bigr\}.\label{eq:DPP1}
\end{equation}
\end{thm}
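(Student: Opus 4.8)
The plan is to prove the two inequalities separating the left-hand side of \eqref{eq:DPP1} from the right-hand side, which I will call $W(t,x)$. This is the standard route for a dynamic programming principle, but the unbounded domain and only locally Lipschitz $U$ force a localization argument at every estimate. Throughout I would rely on Proposition~\prettyref{prop:continuous in x} (the polynomial-growth local Lipschitz bound on $\bar v$), Assumption~\prettyref{assu:finiteness} (the moment bounds making the wealth SDE well posed), and the concatenation/flow property of the controlled state process $X^{\Gamma,\Sigma}$. A key preliminary fact I would establish first is a uniform-in-controls moment estimate of the form $\mathbb{E}^{t,x}[\sup_{s\in[t,T]}|X_s^{\Gamma,\Sigma}|^p]\le C(1+|x|^p)$, obtained from Gr\"onwall and the compactness of $A,B$; combined with Proposition~\prettyref{prop:continuous in x} this gives uniform integrability of the family $\{\bar v(t+\theta,X_{t+\theta}^{\Gamma,\Sigma})\}$, which is what legitimizes passing limits through expectations in the localization.

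For the inequality $\bar v(t,x)\le W(t,x)$: fix $\Gamma\in\mathcal N$ and $\varepsilon>0$, and on the interval $[t+\theta,T]$ use a near-optimal strategy for the subgame started from $(t+\theta,X_{t+\theta}^{\Gamma,\Sigma})$. The subtlety is measurability — the near-optimal strategy depends on the random starting point — so I would invoke a measurable selection argument to glue an $\varepsilon$-optimal strategy on $[t,t+\theta]$ (namely the restriction of $\Gamma$) with a family of $\varepsilon$-optimal strategies on $[t+\theta,T]$ into a single admissible strategy $\tilde\Gamma\in\mathcal N$; this is the classical technical core and where I expect to spend the most care. Taking expectations, using the tower property over $\mathcal F_{t,t+\theta}$ and the flow property of $X$, and then letting $\varepsilon\to 0$ yields one direction. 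For the reverse inequality $\bar v(t,x)\ge W(t,x)$: given $\Gamma\in\mathcal N$, choose for each $s$ a near-optimal response $\Sigma$ on $[t,t+\theta]$; then on $[t+\theta,T]$ the adversary plays near-optimally for the subgame, and again concatenating the market controls (this direction is easier, as one concatenates controls rather than strategies) shows that $\Gamma$ cannot do better than $W(t,x)$, hence $\bar v\le W$ fails and in fact the two match.

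The localization itself proceeds by replacing $\bar v$ and $U$ with bounded Lipschitz truncations $\bar v_n,U_n$ agreeing with the originals on $\{|x|\le n\}$, applying (a version of) the bounded-domain DPP of \citet{fleming1989existence} and \citet{buckdahn2008stochastic} to the truncated problem, and then letting $n\to\infty$; the uniform moment estimate above controls the error $\mathbb{E}^{t,x}[|\bar v(t+\theta,X_{t+\theta}^{\Gamma,\Sigma})-\bar v_n(\cdot)|\mathbbm 1_{\{|X_{t+\theta}|>n\}}]$ uniformly over $\Gamma,\Sigma$, so the truncation error vanishes. The main obstacle, as noted, is the measurable-selection/concatenation step producing a genuinely admissible (non-anticipative) strategy from the pasted pieces while respecting the information structure in the definition of $\mathcal N$; once that is in place, the rest is Gr\"onwall estimates, the tower property, and the continuity from Proposition~\prettyref{prop:continuous in x}, none of which I would spell out in full detail here.
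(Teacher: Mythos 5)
Your final paragraph is, in substance, the paper's actual proof: truncate, apply the bounded-case DPP of \citet{fleming1989existence} to the truncated game, and pass to the limit using moment estimates together with Proposition~\prettyref{prop:continuous in x}. The paper does exactly this, but with two details that your version gets wrong and that matter. First, the truncation must be applied to the state dynamics themselves, not only to $U$ and $\bar v$: the paper cuts off the wealth SDE with $\phi_k$ (equation \eqref{eq:localSDE-1}) and sets $U^k=\phi_{k+2}U$, because otherwise the coefficients $x(\alpha^{\intercal}(\mu-\mathbf r)+r)$ and $x\alpha^{\intercal}\sigma$ remain of linear growth and the cited bounded-case DPP does not apply to ``the truncated problem.'' Second, the truncation error must be estimated on the exit event $\{\tau_k\le T\}$ (with $\tau_k$ the first exit time from $B_k$), controlled by Markov's inequality on the running supremum, not on $\{|X_{t+\theta}|>n\}$ as you write: the truncated and original paths already differ once the ball has been left, even if the position at time $t+\theta$ is back inside, so your error term does not dominate $|\bar v^{k}(t+\theta,X^{k}_{t+\theta})-\bar v(t+\theta,X_{t+\theta})|$. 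With these two repairs, your localization step reproduces the paper's argument (Cauchy--Schwarz, the polynomial growth of $U$ and $\bar v$, and the $O(1/k)$ bound).

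The genuine gap is in the part you present as the core. If you follow the truncation route to its end, the direct two-inequality argument is not needed at all: the sup--inf structure, the pasting of strategies and any measurable selection are inherited from the DPP \eqref{eq:DPP2} of the truncated game, which is quoted, not proved, in the paper. If instead the two inequalities for the original game are meant to be your actual proof, then the step you explicitly defer --- building a single admissible non-anticipative $\tilde\Gamma\in\mathcal N$ by gluing $\varepsilon$-optimal strategies of the subgames started at the random point $(t+\theta,X_{t+\theta})$ --- is precisely the whole difficulty; it is delicate in the Fleming--Souganidis strategy framework (they circumvent naive measurable selection by approximating with time-discretized games), and leaving it as ``where I expect to spend the most care'' leaves the theorem unproved. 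Moreover, your two directions are attached to the wrong constructions: gluing $\Gamma$ on $[t,t+\theta]$ with near-optimal continuation strategies produces an admissible investor strategy and therefore a lower bound, $\bar v\ge W-\varepsilon$, not the inequality $\bar v\le W$ you announce; conversely, concatenating near-optimal market responses yields $\bar v\le W$, and the closing sentence of that paragraph (``hence $\bar v\le W$ fails and in fact the two match'') does not parse as a conclusion of either direction. The uniform moment bound $\mathbb{E}^{t,x}[\sup_{s\in[t,T]}|X^{\Gamma,\Sigma}_s|^{p}]\le C(1+|x|^{p})$ and the appeal to Proposition~\prettyref{prop:continuous in x} are correct and are indeed what the paper's limiting estimates rest on.
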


\begin{proof}
See Appendix \ref{sec:Appendix-proof of DPP}.
\end{proof}

As a consequence of the DPP, the value function $\bar{v}(t,x)$ satisfies
the following property. 
\begin{cor}
\label{cor:continuous in time}

Suppose that Assumptions \prettyref{assu:U}, \prettyref{assu:F convex}
and \prettyref{assu:finiteness} hold true. Then the value function
$\bar{v}(t,x)$ defined in \eqref{eq:upper} is H\"older continuous
in $t$ on $[0,T]$. 
\end{cor}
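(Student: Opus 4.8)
The plan is to derive Hölder continuity in $t$ as a direct consequence of the Dynamic Programming Principle (Theorem~\ref{thm:The-Dynamic-Programming}) together with the local Lipschitz estimate in $x$ (Proposition~\ref{prop:continuous in x}) and the moment bounds implied by Assumption~\ref{assu:finiteness}. Fix $0 \le t < t' \le T$ and set $\theta = t' - t$ (a deterministic time, hence trivially a stopping time). Applying \eqref{eq:DPP1} with this $\theta$, we write
\[
\bar v(t,x) = \adjustlimits\sup_{\Gamma\in\mathcal{N}}\inf_{\Sigma\in\mathcal{B}}\Bigl\{\mathbb{E}^{t,x}\Bigl[\lambda_0\int_t^{t'}F(\Sigma_s)\,ds + \bar v\bigl(t',X_{t'}^{\Gamma,\Sigma}\bigr)\Bigr]\Bigr\}.
\]
Subtracting $\bar v(t',x)$ and using that $\sup\inf$ of a difference is controlled by the $\sup$ over $\Gamma$ and $\inf$/$\sup$ over $\Sigma$ of the pointwise difference, it suffices to bound, uniformly in admissible $\Gamma,\Sigma$,
\[
\mathbb{E}^{t,x}\Bigl[\bigl|\bar v(t',X_{t'}^{\Gamma,\Sigma}) - \bar v(t',x)\bigr|\Bigr] + \lambda_0\,\mathbb{E}^{t,x}\Bigl[\int_t^{t'}\bigl|F(\Sigma_s)\bigr|\,ds\Bigr].
\]
The second term is $O(\theta)$ uniformly, using continuity of $F$ on the compact set $B$ (so $F$ is bounded there), which is even stronger than what Assumption~\ref{assu:finiteness} provides.

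For the first term I would invoke the local Lipschitz bound \eqref{eq:value function polynomial growth}: $|\bar v(t',X_{t'}) - \bar v(t',x)| \le \Phi(|X_{t'}|,|x|)\,|X_{t'}-x|$. By the Cauchy--Schwarz inequality this is at most $\bigl(\mathbb{E}^{t,x}[\Phi(|X_{t'}|,|x|)^2]\bigr)^{1/2}\bigl(\mathbb{E}^{t,x}[|X_{t'}-x|^2]\bigr)^{1/2}$. The first factor is bounded uniformly in $\Gamma,\Sigma$ by standard moment estimates for the linear SDE \eqref{eq:dynamic of X} with coefficients that are bounded on the compact control sets $A,B$ (Gronwall gives $\mathbb{E}^{t,x}[\sup_{s\le T}|X_s|^p] \le C_p(1+|x|^p)$ for every $p$, so the polynomial $\Phi$ has finite second moment with a constant depending only on $x$, $T$, $R$). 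For the second factor, writing $X_{t'}-x = \int_t^{t'} X_s(\alpha_s^\intercal(\mu_s-\mathbf r)+r)\,ds + \int_t^{t'} X_s\alpha_s^\intercal\sigma_s\,dW_s$ and applying the It\^o isometry together with the same moment bound yields $\mathbb{E}^{t,x}[|X_{t'}-x|^2] \le C(1+|x|^2)\theta$, again uniformly over admissible controls. Combining, the first term is $O(\sqrt\theta)$, so altogether $|\bar v(t,x)-\bar v(t',x)| \le C(x)\,\sqrt{|t-t'|}$, i.e.\ $\bar v$ is $\tfrac12$-H\"older in $t$, locally uniformly in $x$.

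The main obstacle is purely bookkeeping rather than conceptual: one must be careful that all the constants appearing (from the Gronwall moment estimates, from the bound on $\Phi$, from the bound on $F$) are genuinely \emph{uniform over the strategies $\Gamma \in \mathcal{N}$ and controls $\Sigma \in \mathcal{B}$}, so that the estimate survives passing through the $\sup_\Gamma\inf_\Sigma$. This is where compactness of $A$ and $B$ (Definitions~\ref{def: An-admissible-vol} and \ref{def: An-admissible-alpha}) does the essential work: it makes the drift coefficient $x(\alpha^\intercal(\mu-\mathbf r)+r)$ and diffusion coefficient $x\alpha^\intercal\sigma$ of \eqref{eq:dynamic of X} uniformly linearly bounded in $x$, independent of the chosen controls, which is exactly what the moment and increment estimates need. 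A secondary technical point is that the constant depends on $x$ through $\Phi$ and through $1+|x|^2$; this is fine since we only claim H\"older continuity in $t$ on $[0,T]$ for each fixed $x$ (or locally uniformly in $x$), consistent with the local nature of the Lipschitz estimate in Proposition~\ref{prop:continuous in x}. One detail worth noting: since $\bar v$ is only defined on $[0,T]$, for $t' = T$ we use $\bar v(T,\cdot) = U(\cdot)$, which satisfies the same type of polynomial-growth Lipschitz bound by Assumption~\ref{assu:U}, so the endpoint case requires no separate argument.
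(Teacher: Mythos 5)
Your proposal is correct and follows essentially the same route as the paper: apply the DPP over $[t,t']$, bound the penalty term by $\lambda_0\eta(t'-t)$ using boundedness of $F$ on the compact set $B$, and control $\mathbb{E}^{t,x}\bigl[\bigl|\bar v(t',X_{t'})-\bar v(t',x)\bigr|\bigr]$ via the polynomial-Lipschitz estimate of Proposition~\ref{prop:continuous in x} combined with Cauchy--Schwarz and the moment bound $\mathbb{E}^{t,x}\bigl[|X_{t'}-x|^2\bigr]\le C_T(1+x^2)(t'-t)$, yielding the same $\lambda_0\eta\,\theta+\Phi(|x|)\,\theta^{1/2}$ estimate. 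The only cosmetic difference is that the paper routes the Lipschitz step through the intermediate inequality \eqref{eq:v_continuous} on differences of $\mathbb{E}[U(X_T)]$ rather than quoting \eqref{eq:value function polynomial growth} directly, which is equivalent.
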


\begin{proof}
See Appendix \ref{sec:Appendix-proof of continuous in time }.
\end{proof}

\section{Viscosity solution of the HJBI equation\label{sec:Viscosity-solution-HJBI} }

In this section, we prove that the value function is the unique viscosity
solution of a Hamilton-Jacobi-Bellman-Isaacs equation. In \prettyref{subsec:Existence-of-viscosity},
we prove the existence of the viscosity solution, and we state the
uniqueness of this viscosity solution in \prettyref{subsec:The-Uniqueness-of-solution}.

\subsection{Existence of a viscosity solution of the HJBI Equation\label{subsec:Existence-of-viscosity}}

Now we state another main result in this paper; the proof is a modification
of \citet{talay2002worst}. 
\begin{thm}
\label{thm:existence-of-viscosity}Suppose that Assumptions \prettyref{assu:U},
\prettyref{assu:F convex} and \prettyref{assu:finiteness} hold true.
Then the value function $\bar{v}(t,x)$ defined in \eqref{eq:upper}
is a viscosity solution of the HJBI equation

\begin{align}
\begin{cases}
\frac{\partial v}{\partial t}(t,x)+H(t,x,\frac{\partial v}{\partial x}(t,x),\frac{\partial^{2}v}{\partial x^{2}}(t,x))=0 & \text{in }\,[0,T)\times\mathbb{R}\\
v(T,x)=U(x) & \text{on }\,[T]\times\mathbb{R},
\end{cases}\label{eq:HJBI1}
\end{align}
where 
\begin{equation}
H(t,x,p,M)=\adjustlimits\inf_{\mathbf{\Sigma}\in B}\sup_{\mathbf{a}\in A}
\left\{ \lambda_{0}F(\mathbf{\Sigma})+(\mathbf{a}^{\intercal} (\mu - \mathbf{r}) +r)xp+\frac{1}{2}tr\left(\mathbf{a}^{\intercal}\mathbf{\Sigma}\mathbf{a}x^{2}M\right)\right\} ,\label{eq:Hamiltonian1}
\end{equation}
for $(t,x,p,M)\in[0,T]\times\mathbb{R}\times\mathbb{R}\times\mathbb{R}$. 
\end{thm}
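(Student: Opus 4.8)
The plan is to follow the standard two-step viscosity-solution program, using the Dynamic Programming Principle (Theorem~\ref{thm:The-Dynamic-Programming}) as the engine, together with the regularity already established in Proposition~\ref{prop:continuous in x} and Corollary~\ref{cor:continuous in time}. Since $\bar v$ is continuous on $[0,T]\times\mathbb{R}$ (H\"older in $t$, locally Lipschitz in $x$), it is an admissible candidate for a viscosity solution; the boundary condition $v(T,x)=U(x)$ is immediate from the definition \eqref{eq:upper}. It therefore remains to verify the supersolution and subsolution properties at interior points $(t_0,x_0)\in[0,T)\times\mathbb{R}$.

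For the \emph{subsolution} property, I would fix a test function $\varphi\in C^{1,2}$ with $\varphi\geq\bar v$ and $\varphi(t_0,x_0)=\bar v(t_0,x_0)$, argue by contradiction assuming $\partial_t\varphi + H(t_0,x_0,\partial_x\varphi,\partial_{xx}\varphi) < -\eta < 0$, and use continuity of $H$ in $(t,x)$ (which holds because $F$ is continuous, $A,B$ are compact, and $\mu,r$ are fixed) to extend this strict inequality to a parabolic neighborhood. Then, choosing the constant-in-time control $\Sigma\equiv\bar\Sigma$ that (approximately) attains the outer $\inf$ in $H$, I would pick $\Gamma$ in the DPP to be a near-optimal response, apply It\^o's formula to $\varphi(s,X_s^{\Gamma,\Sigma})$ between $t_0$ and the exit time of the neighborhood (composed with a small deterministic time $\theta$), and plug into \eqref{eq:DPP1} to derive $\bar v(t_0,x_0) \leq \bar v(t_0,x_0) - c\,\mathbb{E}[\cdot] < \bar v(t_0,x_0)$, a contradiction. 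The \emph{supersolution} direction is symmetric: with $\varphi\leq\bar v$, assume $\partial_t\varphi + H > \eta > 0$, and now exploit that the \emph{inner} $\sup$ over $\mathbf a\in A$ lets the investor's strategy $\Gamma$ pick, pathwise, an allocation that keeps the drift-diffusion generator applied to $\varphi$ above $-\lambda_0 F(\Sigma_s)+\eta/2$ for \emph{every} admissible $\Sigma$, again contradicting \eqref{eq:DPP1}. The bounded-domain/bounded-$U$ hypotheses of Fleming--Souganidis are replaced here by the localization: stop at the first exit from a fixed bounded space-time box, where all coefficients are bounded and the polynomial growth bound \eqref{eq:value function polynomial growth} controls the boundary terms.

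The main obstacle, and the place where I expect the argument to require the most care, is handling the asymmetry between the two players inside the DPP when passing It\^o estimates through the $\sup_\Gamma\inf_\Sigma$. Unlike a control problem, one cannot simply "choose the optimal control": for the subsolution step the market player $\Sigma$ commits first (to a near-minimizing constant), and one must produce an investor strategy $\Gamma$ that is genuinely non-anticipative yet near-optimal on the small time interval --- here the standard trick is to take $\Gamma$ constant equal to a pointwise maximizer $\mathbf a^*(t_0,x_0)$ of the inner problem and absorb the error via continuity of the integrand on the compact set $A\times B$ and the modulus of continuity of $(t,x)\mapsto X^{\Gamma,\Sigma}$. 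For the supersolution step the roles and the order of quantifiers flip, so one instead builds $\Gamma$ as the map sending each $\Sigma$-path to the corresponding frozen maximizer, and checks that this defines an element of $\mathcal N$ (it does, being causal). A secondary technical point is that $\theta$ in the DPP may be taken as a stopping time, so I would set $\theta = \tau\wedge h$ with $\tau$ the box-exit time and $h\downarrow 0$, then divide by $h$ and let $h\to 0$, using dominated convergence justified by Assumption~\ref{assu:finiteness} and \eqref{eq:value function polynomial growth}. Modulo these bookkeeping points, the structure is exactly that of \citet{talay2002worst}, whose proof we adapt.
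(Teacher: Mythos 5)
Your proposal is correct in outline, but it takes a genuinely different route from the paper. You carry out the classical direct verification: starting from the DPP \eqref{eq:DPP1}, you test $\bar{v}$ against smooth functions and run an It\^o/contradiction argument, resolving the $\sup_{\Gamma}\inf_{\Sigma}$ asymmetry by letting the market commit to a near-minimizing constant $\bar{\Sigma}$ in the subsolution step and by assembling a non-anticipative investor strategy from pointwise maximizers in the supersolution step, with localization via exit times from a bounded box. The paper never performs this It\^o computation at all. Instead it recycles the localization from the proof of \prettyref{thm:The-Dynamic-Programming}: the truncated dynamics and truncated utility $U^{k}$ satisfy every hypothesis of \citet{fleming1989existence}, so the truncated value function $\bar{v}^{k}$ --- and a modification $\tilde{v}^{k}$ that keeps the original terminal datum $U$ --- is a viscosity solution of the HJBI equation with truncated Hamiltonian $H^{k}$ by direct citation; the theorem then follows from stability of viscosity solutions once one checks $H^{k}\rightarrow H$ and $\bigl|\bar{v}-\tilde{v}^{k}\bigr|\leq\Phi(\left|x\right|)/k$, the latter obtained from the polynomial bound in Assumption \prettyref{assu:U}, moment estimates on the wealth process, and a Markov-inequality bound on $\mathbb{P}(\tau_{k}\leq T)$. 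What each approach buys: the paper outsources the delicate game-theoretic constructions (non-anticipative strategy responses, measurable selection of maximizers) to \citet{fleming1989existence}, at the price of the tail and convergence estimates; your route is self-contained given the DPP and closer in spirit to \citet{talay2002worst}, but the burden shifts to exactly the points you flag --- making the ``frozen maximizer'' map $\Sigma\mapsto\mathbf{a}(\Sigma)$ rigorous as a measurable, causal element of $\mathcal{N}$ (a measurable-selection or finite-partition argument over the compact set $B$ is needed), and justifying the passage $h\downarrow0$ under the stopping-time DPP with the polynomial growth of $\bar{v}$. Neither point is a gap in principle, but they are where your write-up must supply details that the paper's citation-plus-limit argument simply avoids.
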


\begin{proof}
See Appendix \ref{sec:Appendix-proof of existence of viscosity solution}.
\end{proof}

\subsection{Comparison principle for the HJBI Equation\label{subsec:The-Uniqueness-of-solution}}

In this subsection, we present the comparison principle for equation
\eqref{eq:HJBI1}, which implies the uniqueness of the viscosity solution
of the HJBI equation. We can adapt the proof from \citet[Theorem 4.4.4]{pham2009continuous}
for an HJB equation and straightforwardly extend it to HJBI equations with two controls. 
\begin{thm}
\label{thm:Comparison-Principle-beta}Comparison Principle (\citealt[Theorem 4.4.4]{pham2009continuous}).

Let Assumptions \prettyref{assu:U}, \prettyref{assu:F convex} and
\prettyref{assu:finiteness} hold true. Define the HJBI equation as
\begin{multline}
-\frac{\partial v}{\partial t}(t,x)-\adjustlimits\inf_{\mathbf{\Sigma}\in B}\sup_{\mathbf{a}\in A}
\left\{ \lambda_{0}F(\mathbf{\Sigma})+(\mathbf{a}^{\intercal} (\mu - \mathbf{r}) +r )x\frac{\partial v}{\partial x}(t,x)+\frac{1}{2}tr\left(\mathbf{a}^{\intercal}\mathbf{\Sigma}\mathbf{a}x^{2}\frac{\partial^{2}v}{\partial x^{2}}(t,x)\right)\right\} =0,\\
\text{ }\text{for}\,(t,x)\in[0,T)\times\mathbb{R}.\label{eq:comparePDE-1}
\end{multline}
Let $U$ (resp.\! $V$) be a u.s.c.\! viscosity subsolution (resp.\!
l.s.c.\! supersolution) with polynomial growth condition to equation
\eqref{eq:comparePDE-1}. If $U(T,\cdot)\leq V(T,\cdot)$ on $\mathbb{R}$,
then $U\leq V$ on $[0,T]\times\mathbb{R}$. 
\end{thm}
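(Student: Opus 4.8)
The plan is to adapt the classical doubling-of-variables argument for second-order HJB equations, following \citet[Theorem 4.4.4]{pham2009continuous}, with the two modifications needed here: the unbounded state space $\mathbb{R}$ (which forces a growth-compensating penalization) and the presence of two controls under an $\inf\sup$ (which only affects the Hamiltonian estimate, not the structure of the argument). First I would reduce to a strict subsolution: for $\eta>0$ set $\tilde U(t,x)=U(t,x)-\tfrac{\eta}{t}$ (or $e^{\lambda t}$-type rescaling), which turns the subsolution inequality into a strict one and lets one take $\eta\to 0$ at the very end; one also fixes a smooth growth function, e.g. $\phi(x)=(1+|x|^2)^{k/2}$ with $k$ larger than the polynomial-growth exponent of $U,V$, and argues by contradiction that $\sup_{[0,T]\times\mathbb{R}}(U-V-2\beta\phi)>0$ for some small $\beta>0$ if the theorem fails.

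Next I would double the spatial variable: for $\varepsilon>0$ consider
\[
\Psi_{\varepsilon}(t,x,y)=U(t,x)-V(t,y)-\frac{|x-y|^{2}}{2\varepsilon}-\beta(\phi(x)+\phi(y)),
\]
and let $(t_\varepsilon,x_\varepsilon,y_\varepsilon)$ be a maximum point, whose existence is guaranteed by the penalization $\beta\phi$ (this is exactly where the unbounded domain is handled; the polynomial-growth hypothesis is what makes $\Psi_\varepsilon\to-\infty$ at spatial infinity). Standard arguments then give $|x_\varepsilon-y_\varepsilon|^2/\varepsilon\to 0$, $t_\varepsilon$ bounded away from $T$ (using $U(T,\cdot)\le V(T,\cdot)$), and convergence of $(t_\varepsilon,x_\varepsilon,y_\varepsilon)$ along a subsequence to a maximizer of $U-V-2\beta\phi$. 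Applying the Crandall--Ishii lemma at $(t_\varepsilon,x_\varepsilon,y_\varepsilon)$ produces $a\in\mathbb{R}$ and $X,Y\in\mathbb{R}$ (here scalars, since $x$ is one-dimensional) with $a$ common, $(a,p_\varepsilon+\beta\phi'(x_\varepsilon),X+\beta\phi''(x_\varepsilon))$ in the closure of the parabolic superjet of $U$ and $(a,p_\varepsilon-\beta\phi'(y_\varepsilon),Y-\beta\phi''(y_\varepsilon))$ in the closure of the parabolic subjet of $V$, where $p_\varepsilon=(x_\varepsilon-y_\varepsilon)/\varepsilon$, and the crucial matrix inequality $X-Y\le \tfrac{3}{\varepsilon}|x_\varepsilon-y_\varepsilon|^{2}$-type bound (in scalar form $X\le Y$ up to the $O(1/\varepsilon)$ term).

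Writing the strict subsolution inequality for $U$ and the supersolution inequality for $V$ and subtracting, the time derivatives cancel the strict slack and one is left with an estimate on
\[
H\bigl(t_\varepsilon,x_\varepsilon,p_\varepsilon+\beta\phi'(x_\varepsilon),X+\beta\phi''(x_\varepsilon)\bigr)-H\bigl(t_\varepsilon,y_\varepsilon,p_\varepsilon-\beta\phi'(y_\varepsilon),Y-\beta\phi''(y_\varepsilon)\bigr).
\]
Here the $\inf_{\Sigma\in B}\sup_{\mathbf a\in A}$ structure is dealt with by the usual trick: the inequality $\inf_\Sigma\sup_{\mathbf a}G_1-\inf_\Sigma\sup_{\mathbf a}G_2\le\sup_\Sigma\sup_{\mathbf a}(G_1-G_2)$, so it suffices to bound the difference of integrands uniformly over the \emph{compact} sets $A,B$. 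Since $A,B$ are compact, $F$ is continuous (hence bounded on $B$), and the coefficients $(\mathbf a^\intercal(\mu-\mathbf r)+r)xp$ and $\tfrac12\mathrm{tr}(\mathbf a^\intercal\Sigma\mathbf a x^2 M)$ are polynomial in $x$ with coefficients bounded over $A\times B$, the standard Lipschitz-in-$(x,p,M)$-on-compacts estimates apply: the drift term contributes something like $C|x_\varepsilon-y_\varepsilon|\,|p_\varepsilon|+C\beta(|\phi'(x_\varepsilon)|+|\phi'(y_\varepsilon)|)|x_\varepsilon|$, the diffusion term something like $C(x_\varepsilon^2 X-y_\varepsilon^2 Y)$ handled via the matrix inequality plus $\tfrac{C}{\varepsilon}|x_\varepsilon-y_\varepsilon|^2$, and the $\phi$-corrections are $O(\beta)$ times polynomially-bounded quantities evaluated near the limit point. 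Letting $\varepsilon\to 0$ kills the $|x_\varepsilon-y_\varepsilon|^2/\varepsilon$ and $|x_\varepsilon-y_\varepsilon|\,|p_\varepsilon|$ terms, leaving $0<0+O(\beta)$, and then $\beta\to0$ and $\eta\to 0$ give the contradiction. The main obstacle is bookkeeping the quadratic growth in $x$ of the diffusion coefficient $x^2 M$ against the choice of $\phi$: one must verify that $\beta\phi$ genuinely dominates — i.e. that $\lambda_0 F(\Sigma)+(\dots)xp+\tfrac12\mathrm{tr}(\dots x^2 M)$ evaluated on the modified jets still yields a net $O(\beta)$ error rather than something that blows up with $|x_\varepsilon|$; this is where the precise polynomial degree of $\phi$ relative to the growth of $U,V$ and the $x^2$ in the Hamiltonian must be reconciled, exactly as in \citet[Theorem 4.4.4]{pham2009continuous}, and the rest is routine.
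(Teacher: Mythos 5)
Your proposal follows essentially the same route as the paper, which gives no independent argument but simply defers to the doubling-of-variables/Crandall--Ishii proof of \citet[Theorem 4.4.4]{pham2009continuous} and notes that the extension to the $\inf\sup$ Hamiltonian over the compact sets $A,B$ is straightforward --- exactly the adaptation you carry out, with the correct key estimate $\inf_{\Sigma}\sup_{\mathbf a}G_1-\inf_{\Sigma}\sup_{\mathbf a}G_2\le\sup_{\Sigma}\sup_{\mathbf a}(G_1-G_2)$. Your sketch is sound (and more detailed than the paper's), provided the growth-absorption step you flag at the end is completed as in Pham, e.g.\ via the time-exponential factor that dominates the generator applied to the penalization function.
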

As a consequence of the comparison principle, the function $\bar{v}(t,x)$
\eqref{eq:upper} is in fact the unique viscosity solution of the
HJBI equation \eqref{eq:HJBI1}. 
\begin{cor}
\label{cor:u<v}Let Assumptions \prettyref{assu:U}, \prettyref{assu:F convex}
and \prettyref{assu:finiteness} hold true. Define the lower and upper
value functions of the two-player zero-sum SDG by \eqref{eq:lower}
and \eqref{eq:upper}. Then 
\[
\underline{v}(t,x)\leq\bar{v}(t,x)\qquad\text{for}\,(t,x)\in[0,T]\times\mathbb{R}.
\]
\end{cor}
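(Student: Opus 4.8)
The plan is to identify the Hamilton--Jacobi--Bellman--Isaacs equation solved by the lower value $\underline v$, to show that $\underline v$ is a viscosity \emph{subsolution} of precisely the equation \eqref{eq:comparePDE-1} for which $\bar v$ is a supersolution, and then to invoke the comparison principle (Theorem \prettyref{thm:Comparison-Principle-beta}) with the common terminal datum $U$. For the first point, I would note that the a~priori estimates behind Proposition \prettyref{prop:continuous in x}, Corollary \prettyref{cor:continuous in time} and the dynamic programming principle of Theorem \prettyref{thm:The-Dynamic-Programming} use only the moment bounds for the state SDE \eqref{eq:dynamic of X} together with Assumptions \prettyref{assu:U}, \prettyref{assu:F convex} and \prettyref{assu:finiteness}, and are insensitive to which of the two players is granted the non-anticipating strategy. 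Re-running them for $\underline v$ yields that $\underline v$ is locally Lipschitz in $x$ with polynomial growth (the analogue of \eqref{eq:value function polynomial growth} for $\underline v$), H\"older in $t$, and satisfies
\[
\underline v(t,x)=\adjustlimits\inf_{\Delta\in\mathcal M}\sup_{\alpha\in\mathcal A}\Bigl\{\mathbb E^{t,x}\Bigl[\lambda_{0}\int_{t}^{t+\theta}F(\Delta_{s})\,ds+\underline v\bigl(t+\theta,X_{t+\theta}^{\alpha,{\scriptscriptstyle\Delta}}\bigr)\Bigr]\Bigr\}
\]
for every stopping time $t+\theta\in[t,T]$.

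From this dynamic programming principle, mimicking the proof of Theorem \prettyref{thm:existence-of-viscosity}, I would show that $\underline v$ is a viscosity solution on $[0,T)\times\mathbb R$ of the HJBI equation \eqref{eq:HJBI1} in which $H$ is replaced by the \emph{lower} Hamiltonian
\[
H^{-}(t,x,p,M)=\sup_{\mathbf{a}\in A}\,\inf_{\mathbf{\Sigma}\in B}\Bigl\{\lambda_{0}F(\mathbf{\Sigma})+(\mathbf{a}^{\intercal}(\mu-\mathbf{r})+r)xp+\frac{1}{2}tr\left(\mathbf{a}^{\intercal}\mathbf{\Sigma}\mathbf{a}\,x^{2}M\right)\Bigr\},
\]
together with the terminal condition $v(T,\cdot)=U$. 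The order of the $\sup$ and the $\inf$ is interchanged relative to \eqref{eq:Hamiltonian1} because in $\underline v$ it is the market (the minimiser) that plays the non-anticipating strategy, so the investor's control is ``revealed first'' at the infinitesimal scale.

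Since $\inf_{\mathbf{\Sigma}\in B}\sup_{\mathbf{a}\in A}\{\,\cdot\,\}\ge\sup_{\mathbf{a}\in A}\inf_{\mathbf{\Sigma}\in B}\{\,\cdot\,\}$ for any function, one has $H\ge H^{-}$ pointwise, and therefore every viscosity subsolution of the $H^{-}$-equation is automatically a viscosity subsolution of \eqref{eq:comparePDE-1}: if $\phi\in C^{1,2}$ touches $\underline v$ from above at a point, then $-\partial_{t}\phi-H^{-}\le 0$ holds there, hence a fortiori $-\partial_{t}\phi-H\le 0$. Thus $\underline v$ is a continuous (in particular u.s.c.) viscosity subsolution of \eqref{eq:comparePDE-1} with polynomial growth, while $\bar v$ is, by Theorem \prettyref{thm:existence-of-viscosity}, a continuous (in particular l.s.c.) viscosity supersolution of the same equation with polynomial growth, and $\underline v(T,\cdot)=U=\bar v(T,\cdot)$ on $\mathbb R$. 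The comparison principle (Theorem \prettyref{thm:Comparison-Principle-beta}) then yields $\underline v\le\bar v$ on $[0,T]\times\mathbb R$.

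The step I expect to require the most care is the pair of ``transfer'' arguments above: verifying that the dynamic programming principle and the viscosity characterisation really do carry over to $\underline v$ with the $\sup$--$\inf$ order interchanged. The regularity estimates pass through verbatim, and once the $H^{-}$-equation is available the reduction to \eqref{eq:comparePDE-1} is immediate from $H\ge H^{-}$; in particular, no minimax/Isaacs argument is needed for this one-sided inequality. (By contrast, the reverse bound $\bar v\le\underline v$, and hence the full coincidence stated in Proposition \prettyref{prop:concave in alpha}, would additionally require $H\le H^{-}$, i.e.\ the Isaacs condition, which is where the convexity of $F$ in $\mathbf{\Sigma}$ and the concavity of the integrand in $\mathbf{a}$ come into play through a minimax theorem --- but this is not used here.)
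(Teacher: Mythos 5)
Your proposal is correct and takes essentially the same route as the paper: both arguments rest on the pointwise inequality between the inf-sup Hamiltonian $H$ and the sup-inf Hamiltonian, on the viscosity characterisation of $\underline v$ as the solution of the lower Isaacs equation (obtained, as you and the paper both indicate, by rerunning the Fleming--Souganidis/DPP machinery with the roles of the players exchanged), and on the comparison principle of Theorem \prettyref{thm:Comparison-Principle-beta}. The only difference is the direction of the transfer: the paper uses $H\geq\tilde H$ to downgrade $\bar v$ to a supersolution of the sup-inf equation and compares there, whereas you use the same inequality to downgrade $\underline v$ to a subsolution of the inf-sup equation \eqref{eq:comparePDE-1}, which has the slight advantage of invoking the comparison theorem for exactly the equation in which it is stated.
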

\begin{proof}
From \prettyref{thm:existence-of-viscosity}, $\bar{v}(t,x)$ is a
viscosity solution of the HJBI equation \eqref{eq:HJBI1}. Let $\phi\in C^{\infty}([0,T)\times\mathbb{R})$
be a test function such that $(t_{0},x_{0})\in[0,T)\times\mathbb{R}$
is a local minimum of $\bar{v}-\phi$. Using the viscosity supersolution
property of $\bar{v}(t,x)$, we have 
\[
-\frac{\partial\phi}{\partial t}(t_{0},x_{0})-H(t_{0},x_{0},\frac{\partial\phi}{\partial x}(t_{0},x_{0}),\frac{\partial^{2}\phi}{\partial x^{2}}(t_{0},x_{0}))\geq0,
\]
where $H(t,x,p,M)$ is defined by \eqref{eq:Hamiltonian1}. Define
\begin{equation}
\tilde{H}(t,x,p,M)=\adjustlimits\sup_{\mathbf{a}\in A}\inf_{\mathbf{\Sigma}\in B}\left\{ \lambda_{0}F(\mathbf{\Sigma})
+(\mathbf{a}^{\intercal} (\mu - \mathbf{r}) +r)xp+\frac{1}{2}tr\left(\mathbf{a}^{\intercal}\mathbf{\Sigma}\mathbf{a}x^{2}M\right)\right\} .\label{eq:Hamiltonian1-1}
\end{equation}
It is obvious that $H\geq\tilde{H}$, so 
\[
-\frac{\partial\phi}{\partial t}(t_{0},x_{0})-\tilde{H}(t_{0},x_{0},\frac{\partial\phi}{\partial x}(t_{0},x_{0}),\frac{\partial^{2}\phi}{\partial x^{2}}(t_{0},x_{0}))\geq0\quad\text{in }\,[0,T)\times\mathbb{R}.
\]

Thus $\bar{v}(t,x)$ is a supersolution of the HJBI equation 
\[
\frac{\partial v}{\partial t}(t,x)+\tilde{H}(t,x,\frac{\partial v}{\partial x}(t,x),\frac{\partial^{2}v}{\partial x^{2}}(t,x))=0,\,(t,x)\in[0,T)\times\mathbb{R}.
\]

Using the results of \citet{fleming1989existence} and a similar argument,
we can prove the lower value function $\underline{v}(t,x)$ \eqref{eq:lower}
is the unique viscosity solution of the HJBI equation 
\begin{align}
\begin{cases}
\frac{\partial v}{\partial t}(t,x)+\tilde{H}(t,x,\frac{\partial v}{\partial x}(t,x),\frac{\partial^{2}v}{\partial x^{2}}(t,x))=0 & \text{in }\,[0,T)\times\mathbb{R}\\
v(T,x)=U(x) & \text{on }\,[T]\times\mathbb{R}.
\end{cases}\label{eq:HJBI1-1}
\end{align}
Finally, by the comparison principle, we have $\underline{v}(t,x)\leq\bar{v}(t,x)$,
as required. 
\end{proof}

\section{Numerical results \label{sec:Numerical-results}}

In this section, we provide a few numerical examples
with commonly used utility functions. We first establish an analytical
solution in the case of the Logarithmic utility function. Then we
numerically approximate the value functions for both the Logarithmic
and CRRA utility functions using an implicit finite difference method, a control randomization method, and a Generative Adversarial Network method.

\subsection{Analytical solution\label{subsec:Analytical-Solution}}

In the first example, we consider $U(x)=\ln(x)$ and the penalty function
$F(\sigma_{t}^{2})=(\sigma_{t}-\sigma_{0})^{2}$. It is possible to
find the explicit solution for the value function as well as the optimal
controls. Writing $X_{T}$ explicitly, the value function becomes:
\begin{equation}
\begin{aligned}\bar{v}(t,x) & =\adjustlimits\sup_{\alpha\in\mathcal{A}}\inf_{\sigma^{2}\in\mathcal{B}}\left\{ \mathbb{E}^{t,x}\Bigl[\ln(x)+\int_{t}^{T}(\alpha_{s}\mu+(1-\alpha_{s})r-\frac{1}{2}\alpha_{s}^{2}\sigma_{s}^{2})ds+\int_{t}^{T}\alpha_{s}\sigma_{s}dW_{s}+\lambda_{0}\int_{t}^{T}(\sigma_{s}-\sigma_{0})^{2}ds\Bigr]\right\} \\
 & =\adjustlimits\sup_{\alpha\in\mathcal{A}}\inf_{\sigma^{2}\in\mathcal{B}}\left\{ \mathbb{E}^{t,x}\Bigl[\ln(x)+\int_{t}^{T}\alpha_{s}\mu+(1-\alpha_{s})r-\frac{1}{2}\alpha_{s}^{2}\sigma_{s}^{2}+\lambda_{0}(\sigma_{s}-\sigma_{0})^{2}ds\Bigr]\right\} .
\end{aligned}
\label{eq:value function explicit}
\end{equation}
To find the optimal $\alpha_{s}$ and $\sigma_{s}^{2}$, we can differentiate
instantaneously the integrand $\alpha_{s}(\mu-r)+r-\frac{1}{2}\alpha_{s}^{2}\sigma_{s}^{2}+\lambda_{0}(\sigma_{s}-\sigma_{0})^{2}$
with respect to $\alpha_{s}$ and $\sigma_{s}^{2}$ respectively.
Then we obtain the following optimality conditions: 
\begin{align}
\hat{\alpha}_{s} & =\frac{\mu-r}{\hat{\sigma}_{s}^{2}},\label{eq:simultaneous1}\\
-\frac{1}{2}\hat{\alpha}_{s}^{2}+\lambda_{0}(1-\frac{\sigma_{0}}{\hat{\sigma}_{s}}) & =0,\label{eq:simultaneous2}
\end{align}
which leads to a quartic equation 
\begin{align}
0 & =\hat{\sigma}_{s}^{4}-\sigma_{0}\hat{\sigma}_{s}^{3}-\dfrac{(\mu-r)^{2}}{2\lambda_{0}}.\label{eq:quartic}
\end{align}

The optimal $\hat{\sigma}_{s}$ and $\hat{\alpha}_{s}$ can be solved
from equation \eqref{eq:quartic} explicitly; we provide the solution
in the Appendix \ref{sec:Appendix-explicit solution}. The equation  \eqref{eq:quartic} always has a real positive root, hence
the optimal volatility $\hat{\sigma}_{s}\in B$ and optimal strategy
$\hat{\alpha}_{s}\in A$.  By substituting the optimal controls into
\eqref{eq:value function explicit}, we obtain the analytical solution
of the value function. From equations \eqref{eq:simultaneous1}--\eqref{eq:simultaneous2},
we observe that the optimal volatility and investment strategy are
both constants, being independent of the wealth $X_{s}$ and the time
$s$. The classical optimal portfolio strategy given by Merton is
also a constant, where $\alpha^{*}=\frac{\mu-r}{\sigma^{2}(1-\gamma)}$
for CRRA utility functions. However, in our problem, it is not possible to
find an analytical solution for a power utility function. We will
use numerical methods to estimate the values in the next subsection.
It is worth mentioning that, when $U(x)=\ln(x)$,  we can apply the above method to portfolios with multiple risky assets and get the analytical solutions by solving a system of optimality conditions.  The detailed process is very similar, hence omitted here. 
Moreover, the reference volatility $\sigma_{0}$ is not necessarily a constant, it
can be a local volatility depending on time and stock price.
However, for multiple assets, it would increase the dimension of the problem.

\subsection{Comparison of robust and non-robust portfolios with  Monte Carlo simulation \label{subsec:Monte Carlo comparison}}

In this section, we implement our robust strategy using Monte Carlo simulations, and compare the performance of robust and non-robust portfolios. 

As we know, in the real world volatility estimates are noisy and biased, though likely to oscillate around a reference value in the long run. In the first experiment, 
we have a reference covariance matrix $\Sigma_0$, which is estimated according to historical data. 
We assume that the real-world covariance is \textit{the reference covariance  $\Sigma_0 $ plus some noise}.
We construct robust and non-robust portfolios consisting of two risky assets and one risk-free asset. 
For the robust portfolio, we use $\lambda_0 F(\Sigma_s) = \lambda_0 \bigl\Vert  \Sigma_s - \Sigma_0 \bigl\Vert_2^2$ ( $\bigl\Vert \cdot \bigl\Vert_2$ denotes the usual Frobenius norm) as the penalty function,  then the analytical robust investment strategy $(\hat{\alpha}_s^1, \hat{\alpha}_s^2)$ can be calculated in a similar method to the one in section \ref{subsec:Analytical-Solution}. 
For the non-robust one, we use $\Sigma_0$ as the covariance, then 
calculate the  non-robust strategy $(\alpha_s^1, \alpha_s^2)$ accordingly. 
Assuming the real covariance matrix during the investment process is $\Sigma _{\mathrm{real}} = \Sigma_0 + \varepsilon \times \text{noise}$,  where the noise follows a standard normal distribution $\mathcal{N} (0,1)$ and $\varepsilon $ is the magnitude of the noise, 
we use Monte Carlo simulations  to estimate the expected utility function  
\begin{equation}
\mathbb{E} \Bigl[ \ln(X_T) \Bigr] = \mathbb{E}^{t,x}\Bigl[\ln(x)+\int_{t}^{T}\alpha_{s}^{\intercal} (\mu - \mathbf{r}) + r -\frac{1}{2} \alpha_{s}^{\intercal} \Sigma _{\mathrm{real}} \alpha_{s} ds\Bigr].
\label{robust and nonrobust}
\end{equation}
We substitute $ \alpha_s = (\hat{\alpha}_s^1, \hat{\alpha}_s^2)$ in \eqref{robust and nonrobust} for the robust portfolio, and $\alpha_s = (\alpha_s^1, \alpha_s^2)$ for the non-robust one. 

The results with various $\lambda_0$ are shown in Figures \ref{fig:robsut_simulation_la001} to \ref{fig:robsut_simulation_la70}, where we used  $2 \times 10^5$ paths in the simulation and the initial wealth $X_0 = 1$. We can observe that the robust portfolio may underperform when there is little noise. But, as the noise size $\varepsilon$ increases, the robust strategy will outperform the non-robust strategy eventually. 
Comparing Figures \ref{fig:robsut_simulation_la001}, \ref{fig:robust_simulation_la1} and  \ref{fig:robsut_simulation_la70}, we can find that when the penalty is relatively weak ($\lambda_0 = 0.01$), it takes a bigger noise size for the robust strategy to outperform. When the penalty is stiff ($\lambda_0 = 70$), the robust strategy will outperform with a very small noise size. The robust expected utility is almost a constant for all sizes of noise in Figure \ref{fig:robsut_simulation_la001}, meaning that our model is very robust to changes in market circumstances. Among the three values of $\lambda_0$ illustrated, Figure \ref{fig:robsut_simulation_la70} is probably  the most attractive to investors. When the reference $\Sigma_0$ is perfect, the robust portfolio only loses to the non-robust one by a little, but when $\Sigma_0$ is wrong, the robust portfolio outperforms the non-robust one by a large amount. It means the price we pay for the robustness is tolerable, but the potential reward is substantial.

Define the \textit{crossing point $\varepsilon$ } as the value of $\varepsilon$ for which the robust expected utility matches the non-robust expected utility.
Figure \ref{fig:epsilon_vs_lambda} depicts how the crossing point $\varepsilon$ varies with respect to $\lambda_0$. 
It tells us how much should our reference covariance be wrong for the robust portfolio to outperform the non-robust portfolio. 
The behaviour of the robust portfolio varies with $\lambda_0$. For a certain $\varepsilon$, by looping over a range of $\lambda_0$, we can find the one giving us the maximal  robust expected utility. This relation is plotted in Figure \ref{fig:best lambda}.
With this plot, if we know how confident we are with the reference $\Sigma_0$ (i.e., the value of $\varepsilon$), we can choose  the best $\lambda_0$ for robust portfolio allocation. 

\begin{figure}[h]
\centering
\begin{minipage}{.5\textwidth}
  \centering
  \includegraphics[scale=0.4]{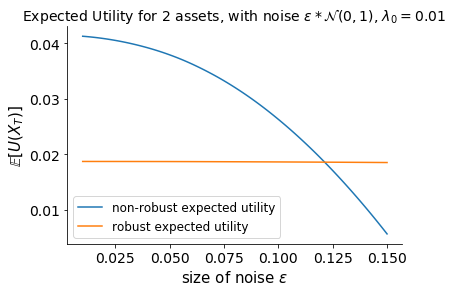} 
  \captionof{figure}{Expected utility with $\lambda_0 = 0.01$}
  \label{fig:robsut_simulation_la001}
\end{minipage}%
\begin{minipage}{.5\textwidth}
  \centering
  \includegraphics[scale=0.4]{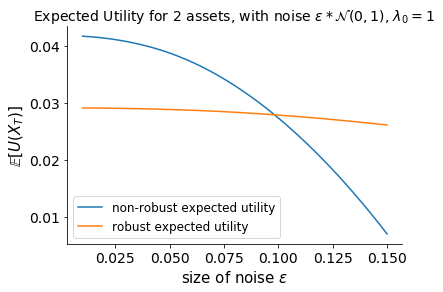}
  \captionof{figure}{Expected utility with $\lambda_0 = 1$}
  \label{fig:robust_simulation_la1}
\end{minipage}
\begin{minipage}{.5\textwidth}
  \centering
  \includegraphics[scale=0.4]{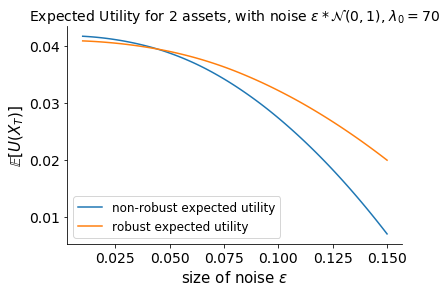} 
  \captionof{figure}{Expected utility with $\lambda_0 = 70$}
  \label{fig:robsut_simulation_la70}
\end{minipage}%
\end{figure}

\begin{figure}[h]
\centering
\begin{minipage}{.5\textwidth}
  \centering
  \includegraphics[scale=0.4]{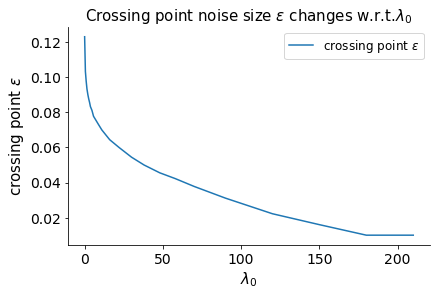} 
  \captionof{figure}{}
  \label{fig:epsilon_vs_lambda}
\end{minipage}%
\begin{minipage}{.5\textwidth}
  \centering
  \includegraphics[scale=0.4]{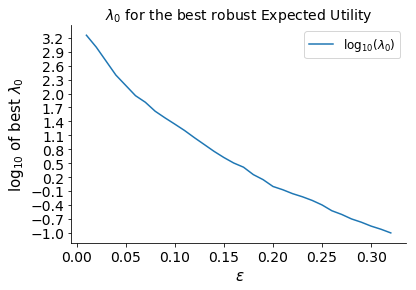}
  \captionof{figure}{}
  \label{fig:best lambda}
\end{minipage}
\end{figure}

\subsection{Comparison of robust and non-robust portfolios with  empirical market data \label{subsec:Empirical Experiment}}

In the second experiment, we implement the robust and non-robust strategies with empirical market data.
We have  $1007 $ portfolios, and  we construct each portfolio according to robust and non-robust allocations, respectively.
Each portfolio consists of $2$ risky assets and $1 $ risk-free asset, with a maturity of $T=1$ year. The portfolios' starting dates range from 02/04/15 to 03/04/19 (for example, the $1$st portfolio starts on 02/04/15 and lasts for one year, the $1007$th portfolio starts on 03/04/19 and lasts for one year as well). 
We choose the S$\&$P500 (\textsuperscript{$\wedge$}GSPC) and SPDR Gold Shares (GLD)\footnote{Stock prices are downloaded from Yahoo Finance.} as our risky assets and use a constant interest rate $ r = 0.015 $.
For a specific portfolio, we set $\Sigma_0$ to be the sample covariance estimator of the $5$ years of daily relative returns before the starting date. The estimated annual expected  returns $\mu_1, \mu_2$ are the exponentially weighted moving average of the  daily relative returns with a $5$-year lookback window and $2.75$-year half-life. With a decay parameter $ \beta = 0.999$, for the $n$th portfolio,
$\mu_{i,i=1,2} = 252 \times \frac{1}{1-\beta^{1260}}\sum_{t=0}^{1260} (1-\beta)\beta^t \frac{S^i_{n-t} - S^i_{n-t-1}}{S^i_{n-t-1}}$.

In this experiment, we use a logarithmic utility function and a penalty function $\lambda_0 F(\Sigma_s) = \lambda_0 \bigl\Vert  \Sigma_s - \Sigma_0 \bigl\Vert_2^2$. At the beginning of the investment process for each portfolio, 
we estimate parameters $\mu_1, \mu_2, \Sigma_0$ and then
compute the robust and non-robust portfolio allocations accordingly. Starting from an initial wealth $ X_{0}  =1 $, the wealth of  the non-robust portfolio evolves as
\begin{multline}
X_{n+1}  =  X_{n} \exp \Bigl\{ \alpha_n^1 \frac{S^1_{n+1}-S^1_{n}}{S^1_n} +  \alpha_n^2 \frac{S^2_{n+1}-S^2_{n}}{S^2_n} + (1- \alpha_n^1 - \alpha_n^2) r \Delta t \\
 -\frac{1}{2} \left[ \alpha_n^1 \left(\frac{S^1_{n+1}-S^1_{n}}{S^1_n} - \mu_1 \Delta t\right) + \alpha_n^2 \left(\frac{S^2_{n+1}-S^2_{n}}{S^2_n} - \mu_2 \Delta t\right) \right]^2 \Bigr\} \,,\,n\in[0,251],
 \label{expirical_dynamics}
 \end{multline}
where $(\alpha_n^1, \alpha_n^2)$ are the non-robust allocations on day $n$. For the wealth of the robust portfolio, just replace $(\alpha_n^1, \alpha_n^2)$ with the robust allocations $(\hat{\alpha}_n^1, \hat{\alpha}_n^2)$ in \eqref{expirical_dynamics}. Finally, by averaging the $\ln(X_T)$ of all the portfolios, we get the expected utility function. 

Figures \ref{fig:la001}--\ref{fig:la1000} present the terminal wealth $X_T$ of the $1007$  robust and non-robust portfolios. For a small $\lambda_0$, the robust portfolios are very stable. No matter how the market changes, the robust terminal wealth stays around $1$. As $\lambda_0$ increases, the robust portfolios start to show fluctuations. Eventually, their behaviour converges to that of the non-robust portfolios as $\lambda_0$ approaches to infinity, which corresponds to the non robust case.
This behaviour is consistent with our expectations. The penalty function is not playing its role when $\lambda_0$ is close to zero. Hence the robust allocations are optimal for the most chaotic market situations, and the investment strategies are very conservative. 
As $\lambda_0$ becomes larger, the penalty function comes into play and prevents extreme volatilities. As a consequence, the robust strategies are less conservative, and portfolios will show more fluctuations under regime changes. 

We show the robust and non-robust expected utilities in Figure \ref{fig:10year_v_la0}. It depicts how $\mathbb{E}[\ln (X^{\alpha^1, \alpha^2}_T)]$ and $\mathbb{E}[\ln (X^{\hat{\alpha}^1, \hat{\alpha}^2}_T)]$ change w.r.t. $\lambda_0$. We can compare this plot with Figures \ref{fig:robsut_simulation_la001}, \ref{fig:robust_simulation_la1}, \ref{fig:robsut_simulation_la70} and \ref{fig:best lambda} in section \prettyref{subsec:Monte Carlo comparison}. For a given amount of noise, the robust portfolio may underperform for small $\lambda_0$, but the value will increase gradually and reach a highest point. Finally, the robust  expected utility will converge to the non-robust one. 

\begin{figure}[h]
\centering
\begin{minipage}{.5\textwidth}
  \centering
  \includegraphics[scale=0.4]{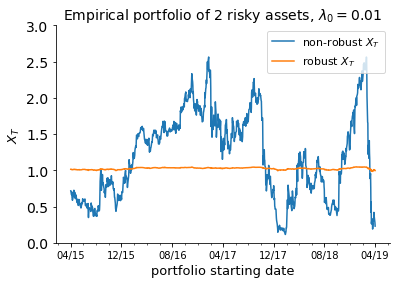} 
  \captionof{figure}{Terminal wealth with $\lambda_0=0.01$}
  \label{fig:la001}
  \vspace{2ex}
\end{minipage}%
\begin{minipage}{.5\textwidth}
  \centering
  \includegraphics[scale=0.4]{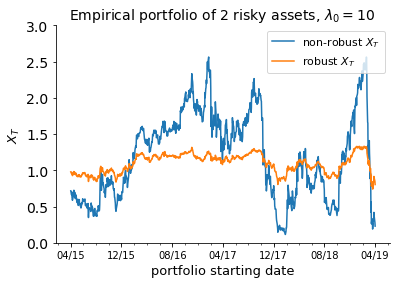}
  \captionof{figure}{Terminal wealth with $\lambda_0=10$}
   \label{fig:la10}
  \vspace{2ex}
\end{minipage}
\begin{minipage}{.5\textwidth}
  \centering
  \includegraphics[scale=0.4]{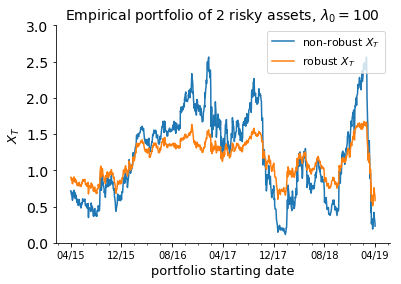} 
  \captionof{figure}{Terminal wealth with $\lambda_0=100$}
   \label{fig:la100}
\end{minipage}%
\begin{minipage}{.5\textwidth}
  \centering
  \includegraphics[scale=0.4]{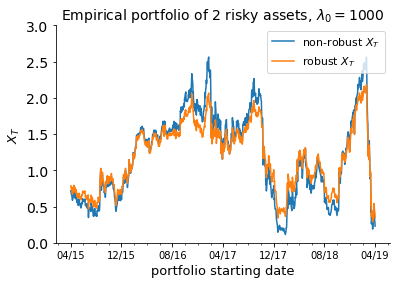} 
  \captionof{figure}{Terminal wealth with $\lambda_0=1000$}
  \label{fig:la1000}
\end{minipage}%
\end{figure}

\begin{figure}[h]
\begin{centering}
\includegraphics[scale=0.4]{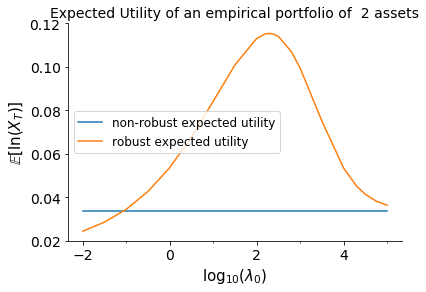}
\par\end{centering}
\caption{Empirical expected utility w.r.t. $\lambda_0$ 
\label{fig:10year_v_la0}}
\end{figure}

To illustrate the time evolution of the portfolio wealth,  we show the stock prices and  wealth of two portfolios,  starting on 2017-01-03 (Figure \ref{fig:portfolio_17_01_03}) and 2018-01-26 (Figure \ref{fig:portfolio_18_01_26}), respectively.
For the portfolio in Figure \ref{fig:portfolio_17_01_03}, the optimal non-robust allocations are $\alpha^1 = 5.778, \alpha^2 = -2.174$, and the robust allocations with $\lambda_0=200$ are $\hat{\alpha}^1 = 3.083, \hat{\alpha}^2 = -1.452 $. The allocations are both constant, independent of time. The S$\&$P500 keeps rising in Figure \ref{fig:stock_price_17_01_03}, while there are some fluctuations in the Gold price. Over the same period, the absolute performance of the non-robust portfolio is better all the way (Figure \ref{fig:portfolio_wealth_17_01_03}). 
For the portfolio in Figure \ref{fig:portfolio_18_01_26}, we have $\alpha^1 = 9.418, \alpha^2 = 0.301$, and 
$\hat{\alpha}^1 = 3.940, \hat{\alpha}^2 = -0.054 $. Since the proportions invested in Gold are small for both robust and non-robust portfolios, the trend of wealth is dominated by the price of S$\&$P500. 
There are two big drops happening in Feb. 2018 and Dec. 2018, respectively. These are also reflected in the portfolio wealth in Figure \ref{fig:portfolio_wealth_18_01_26}. 
However, compared with the non-robust strategy, the robust strategy is more conservative.  Hence, the robust portfolio loses less during the market shocks and outperforms the non-robust one.

\begin{figure}[h]
    \centering
    \subfloat[\label{fig:stock_price_17_01_03}]{{\includegraphics[scale=0.45]{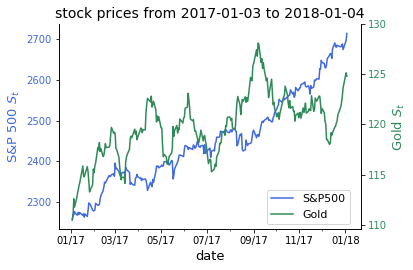} }}%
    \qquad \quad
    \subfloat[\label{fig:portfolio_wealth_17_01_03}]{{\includegraphics[scale=0.45]{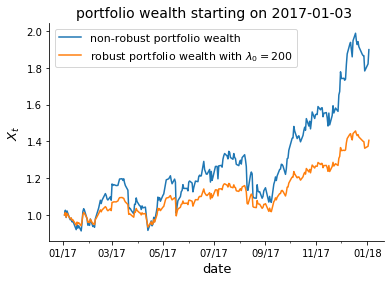} }}%
    \caption{The portfolio starting on 2017-01-03}%
    \label{fig:portfolio_17_01_03}%
\end{figure}

\begin{figure}[h]
    \centering
    \subfloat[\label{fig:stock_price_18_01_26}]{{\includegraphics[scale=0.45]{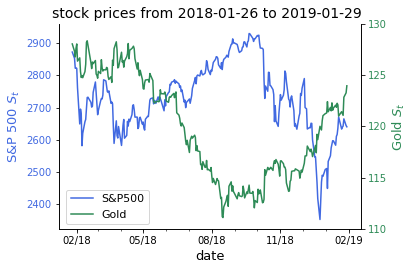} }}%
    \qquad \quad
    \subfloat[\label{fig:portfolio_wealth_18_01_26}]{{\includegraphics[scale=0.45]{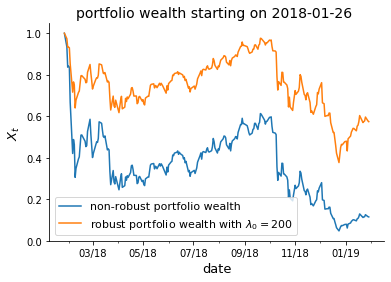} }}%
    \caption{The portfolio starting on 2018-01-26}%
    \label{fig:portfolio_18_01_26}%
\end{figure}

From the above empirical experiments and the Monte Carlo simulations from subsection \prettyref{subsec:Monte Carlo comparison} , we can see that, by adding this robust mechanism with a properly chosen $\lambda_0$,  the portfolio value can overcome a wrong covariance matrix estimate and is less vulnerable to sudden market shocks. 
Furthermore, unlike other robust methods which only consider the worst case, our model is more flexible and provides a greater range of more practical in-between option. 

\subsection{Implicit finite difference method\label{subsec:Finite-Difference-Method}}

In this section, we are computing the value function via an implicit finite difference method. 
We use the penalty function $\lambda_0 F(\sigma_{t}^{2})=\lambda_{0}(\sigma_{t}^{2})^{2}$ for simplicity. Then the HJBI equation is 
\begin{equation}
\bar{v}_{t}+H(t,x,\bar{v}_{x},\bar{v}_{xx})=0,\label{eq:numer_HJBI}
\end{equation}
where the Hamiltonian is defined by 
\begin{equation}
H(t,x,\bar{v}_{x},\bar{v}_{xx})=\inf_{\mathbf{\sigma}^{2}}\sup_{\mathbf{a}}\Bigl\{\mathbf{a}(\mu-r)x\bar{v}_{x}+rx\bar{v}_{x}+\frac{1}{2}\mathbf{a}^{2}\sigma^{2}x^{2}\bar{v}_{xx}+\lambda_{0}(\sigma^{2})^{2}\Bigr\}.\label{eq:Hamiltonian in PDE}
\end{equation}
Solving for the optimal controls in \eqref{eq:Hamiltonian in PDE}
using the first order condition, we obtain $\hat{\mathbf{a}}=-\frac{(\mu-r)x\bar{v}_{x}}{\sigma^{2}x^{2}\bar{v}_{xx}}$
and $\hat{\sigma}^{2}=\Bigl(-\frac{(\mu-r)^{2}\bar{v}_{x}^{2}}{4\lambda_{0}\bar{v}_{xx}}\Bigr)^{1/3}$.
Substituting $\hat{\mathbf{a}}$ and $\hat{\sigma}^{2}$ into the
PDE \eqref{eq:numer_HJBI}, we obtain 
\begin{alignat*}{1}
\bar{v}_{t}+C\bar{v}_{x}^{\frac{4}{3}}(-\bar{v}_{xx})^{-\frac{2}{3}}+rx\bar{v}_{x} & =0,
\end{alignat*}
where $C=(3\times2^{-\frac{4}{3}})\lambda_{0}^{\frac{1}{3}}(\mu-r)^{\frac{4}{3}}$.
Note we have shown in \prettyref{sec:Existence-of-saddle} that $\bar{v}_{xx}<0$.

Since the PDE \eqref{eq:numer_HJBI} is non-linear, in order to use
the implicit finite difference method, we first linearize the function
$H$ with respect to the second order term via the Legendre transform.
This method was also used by \citet{jonsson2002partial,jonsson2002optimal}
to solve nonlinear HJB equations. We also combine the linearization
step with a fixed-point iteration scheme.

Define $H^{*}$ as the Legendre transform of $H$ with respect to
the second order term; it is given by 
\begin{alignat*}{1}
H^{*}(a) & =-C_{2}a^{\frac{2}{5}}\bar{v}_{x}^{\frac{4}{5}}-rx\bar{v}_{x},
\end{alignat*}
where $C_{2}=\frac{5}{3}(\frac{2}{3})^{-\frac{2}{5}}C^{\frac{3}{5}}$.
Hence, we can represent $H(\bar{v}_{xx})$ as the supremum of linear
functions of $\bar{v}_{xx}$, 
\begin{alignat}{1}
H(\bar{v}_{xx}) & =\sup_{a}\biggl\{ a\cdot\bar{v}_{xx}-H^{*}(a)\biggr\}.\label{eq:numer_linearized H}
\end{alignat}
It is difficult to check the condition for stability in our PDE as
the optimal $a$ is unknown. Fortunately, implicit finite difference
methods have a weaker requirement for stability than explicit finite
difference methods.

We set the time grid as $0,1,...,n,n+1,...,N$, and the spatial grid
as $1,2,...i,i+1,...M$. With the maturity $T=1$, we use a constant
time step $\Delta t=\frac{T}{N}$ and a constant spatial step $\Delta x$.
We apply a forward approximation for $\bar{v}_{t}$, a central approximation
for $\bar{v}_{x}$, and a standard approximation for $\bar{v}_{xx}$.
Working backward in the implicit scheme, at each time step $n$, the
optimal $\hat{a}$ in \eqref{eq:numer_linearized H} is the solution
of the first order condition $\bar{v}_{xx}^{n}+C_{2}(\bar{v}_{x}^{n})^{\frac{4}{5}}\frac{2}{5}\hat{a}^{-\frac{3}{5}}=0,$
or equivalently, 
\begin{equation}
\hat{a}=\frac{2}{3}C(\bar{v}_{x}^{n})^{\frac{4}{3}}(-\bar{v}_{xx}^{n})^{-\frac{5}{3}}\eqqcolon f(\hat{a}).\label{eq:fixed point function}
\end{equation}
Although we do not have the true values for $\bar{v}^{n}$ as the values
of $\bar{v}^{n}$ depend on $\hat{a}$, we can use a fixed-point iteration
scheme to find the solution of equation \eqref{eq:fixed point function}.
First we make an initial guess $\hat{a}_{0}$ using the known values
$\bar{v}^{n+1}$, then iteratively generate a sequence $\hat{a}_{k,k=1,2,...}$
with $\hat{a}_{k}=f(\hat{a}_{k-1})$ until $\hat{a}_{k}$ converges.

Finally we can substitute the discrete approximations of the derivatives
into the HJBI equation \eqref{eq:numer_HJBI}, and we obtain the implicit
form: 
\begin{multline}
\left(\frac{\hat{a}(i)\Delta t}{\Delta x^{2}}-\frac{r\left(i\Delta x+x_{0}\right)\Delta t}{2\Delta x}\right)\bar{v}_{i-1}^{n}+\left(-1-\frac{2\hat{a}(i)\Delta t}{\Delta x^{2}}\right)\bar{v}_{i}^{n}+\left(\frac{\hat{a}(i)\Delta t}{\Delta x^{2}}+\frac{r\left(i\Delta x+x_{0}\right)\Delta t}{2\Delta x}\right)\bar{v}_{i+1}^{n}\\
=-\bar{v}_{i}^{n+1}-C_{2}\hat{a}(i)^{\frac{2}{5}}\Bigl(\frac{\bar{v}_{i+1}^{n+1}-\bar{v}_{i-1}^{n+1}}{2\Delta x}\Bigr)^{\frac{4}{5}}\Delta t.\label{eq:discretized implicit PDE}
 \end{multline}

Let $\mathbf{B}$ be the coefficient matrix, $K^{n}$ the value vector
at time $n$ and $F^{n+1}$ the right hand side of \eqref{eq:discretized implicit PDE}.
Then equation \eqref{eq:discretized implicit PDE} can be written
in a matrix notation: 
\[
\mathbf{B}K^{n}+G^{n}=F^{n+1},\qquad n=N-1,...,1,0.
\]
The algorithm for this method is summarized in \textbf{Algorithm \ref{alg:Implicit-Finite-Difference}}.

\begin{algorithm}[H]
\For{step $n=N : 1$}{
\begin{enumerate}
\item Solve $\mathbf{B}K^{n-1}+G^{n-1}=F^{n}$ using $\hat{a}_{0}(i)=g(\bar{v}_{i+1}^{n},\bar{v}_{i}^{n},\bar{v}_{i-1}^{n})$,
and get the value vector $K_{0}^{n-1}$ 
\item \label{enu:Solve value }Solve $\mathbf{B}K^{n-1}+G^{n-1}=F^{n}$
using $\hat{a}_{1}(i)=g(\bar{v}_{i+1}^{n-1},\bar{v}_{i}^{n-1},\bar{v}_{i-1}^{n-1})$,
where the values $\bar{v}^{n-1}$ are from $K_{0}^{n-1}$. Then get
the value vector $K_{1}^{n-1}$. 
\item Repeat step \ref{enu:Solve value } until $\bigl\Vert\hat{a}_{j}-\hat{a}_{j-1}\bigr\Vert_{2}\leq\text{tolerance }$ 
\item Let $K^{n-1}=K_{j}^{n-1}$ 
\end{enumerate}
}
\caption{Implicit Finite Difference Scheme\label{alg:Implicit-Finite-Difference}}
\end{algorithm}

\subsubsection{Logarithmic utility function}

In the 1-asset example, we use the logarithmic utility function and the penalty
function  $\lambda_0 F(\sigma_{t}^{2})=\lambda_{0}(\sigma_{t}^{2})^{2}$. The
terminal condition is given by the utility function, 
\begin{alignat*}{1}
\bar{v}(t_{N},x_{i}) & =U(x_{i})\quad\forall i\in[1,M].
\end{alignat*}

The boundary conditions $\bar{v}(t_{n},x_{1})$ and $\bar{v}(t_{n},x_{M})$
for $n\in[0,N-1]$ are given explicitly by the equation 
\[
\bar{v}(t_{n},x)=\ln(x)+\sup_{\alpha}\inf_{\sigma^{2}} \Bigl\{ \sum_{s=n}^{N-1}\left(\alpha_{s}(\mu-r)+r-\frac{1}{2}\alpha_{s}^{2}\sigma_{s}^{2}+\lambda_{0}(\sigma_{s}^{2})^{2}\right)\Delta t \Bigr\},
\]
with 
\[
\hat{\alpha}_{s}=\frac{\mu-r}{\sigma_{s}^{2}},\:\hat{\sigma}_{s}^{2}=\frac{\alpha^{2}}{4\lambda_{0}}.
\]

\begin{figure}
\centering
\begin{minipage}{.5\textwidth}
  \centering
  \includegraphics[width=.8\linewidth]{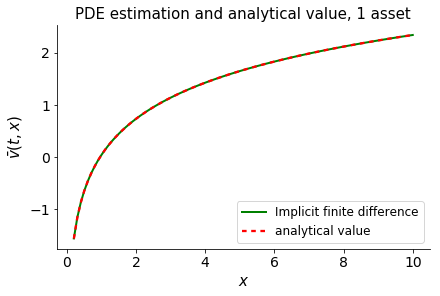} 
  \captionof{figure}{$U(X_{T})=\ln(X_{T})$, $S_{t}\in\mathbb{R}^{1}$}
  \label{fig:PDE-result}
\end{minipage}%
\begin{minipage}{.5\textwidth}
  \centering
  \includegraphics[width=.8\linewidth]{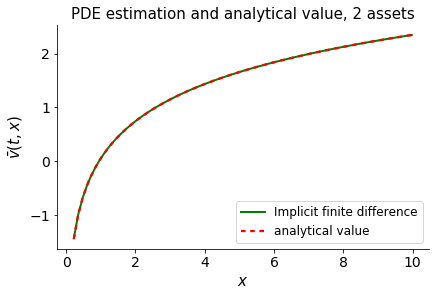}
  \captionof{figure}{$U(X_{T})=\ln(X_{T})$, $S_{t}\in\mathbb{R}^{2}$}
  \label{fig:PDE for 2d log}
\end{minipage}
\end{figure}


Similarly, we can also implement the above method on a 2-asset example
where $S_{t}\in\mathbb{R}^{2}$ and $\lambda_0 F(\Sigma_{t})=\lambda_{0}\left\Vert \Sigma_{t}\right\Vert _{2}^{2}$.  
The HJBI equation becomes
\begin{multline}
\bar{v}_{t}+\inf_{\sigma_{1},\sigma_{2},\rho}\sup_{\alpha_{1},\alpha_{2}} \Bigl\{  \left(\alpha_{1}(\mu_{1}-r)+\alpha_{2}(\mu_{2}-r)+r\right)x\bar{v}_{x}+\frac{1}{2}(\alpha_{1}^{2}\sigma_{1}^{2}+2\alpha_{1}\alpha_{2}\sigma_{1}\sigma_{2}\rho+\alpha_{2}^{2}\sigma_{2}^{2})x^{2}\bar{v}_{xx} \\
+\lambda_{0}(\sigma_{1}^{4}+2\sigma_{1}^{2}\sigma_{2}^{2}\rho^{2}+\sigma_{2}^{4})\Bigr\} =0.
\label{HJBI 2 asset}
\end{multline}

We can solve for the optimal controls  $\hat{\alpha}_{1}, \hat{\alpha}_{2}, \hat{\sigma}_{1},\hat{\sigma}_{2}, \hat{\rho}$ in \eqref{HJBI 2 asset} using the first order condition.
In this example, we always have the optimal $\hat{\sigma}_1, \hat{\sigma}_2 >0$ and $\hat{\rho} \in [-1,1]$. Then, by applying \textbf{Algorithm \ref{alg:Implicit-Finite-Difference}}, we can get the value function of a portfolio with 2 risky assets.

Figure \ref{fig:PDE-result} shows the PDE estimated $\bar{v}(t,x)$ for the 1-asset example with parameters $r=0.015, \mu=0.035, \lambda_{0}=10$; 
Figure \ref{fig:PDE for 2d log} shows result for the 2-asset case with parameters $r=0.015,\mu_{1}=0.035,\mu_{2}=0.045,\lambda_{0}=10$.
Comparing with the analytical solution, we can see that the two curves completely overlap for both 1-asset and 2-asset cases, which validates the accuracy of the PDE approach.

\subsubsection{Power utility function}

In the second example, we use a power utility function. This time,
we only have the terminal condition and the boundary condition for
$x_{1}=0$, but not the boundary condition for a large $x_{M}$. For
functions $x^{\gamma}$ where $\gamma<1,\gamma\neq0$, the limit of
the first order derivative approaches $0$ as $x$ goes to infinity.
Therefore we can use a zero Neumann boundary condition when $x_{M}$
is large. Then we have the following terminal and boundary conditions:
\begin{gather*}
\bar{v}(t_{N},x_{i})=U(x_{i})\,\forall i\in[1,M],\quad\bar{v}(t_{n},x_{1})=0\,\forall x_{1}=0,n\in[0,N-1],\quad\frac{\partial\bar{v}}{\partial x}(t_{n},x_{M})=0\,\forall n\in[0,N-1].
\end{gather*}
Figure \ref{fig:PDE value power utility} shows the simulated value
$\bar{v}(t,x)$ for a range of $x$, with $U(X_{T})=\frac{4}{3}X_{T}^{\frac{1}{4}}$
and parameters $\mu=0.035,r=0.015,\lambda_{0}=10$. We only display
the estimated curve computed by our PDE method, as there is no analytical
solution available for comparison in this example. Figure \ref{fig:estimated a}
shows the first four iterations of the estimated $\hat{a}$ from an
initial guess. There is almost no difference between the four curves,
indicating that the fixed point iteration scheme has converged within
the first four iterations.

%

\begin{figure}%
    \centering
    \subfloat[estimated value function\label{fig:PDE value power utility}]{{\includegraphics[width=.45\textwidth]{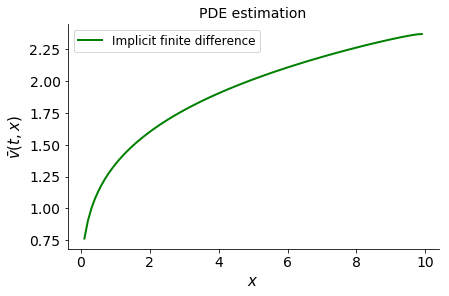} }}%
    \qquad \quad
    \subfloat[estimated $\hat{a}$ in each iteration\label{fig:estimated a}]{{\includegraphics[width=.45\textwidth]{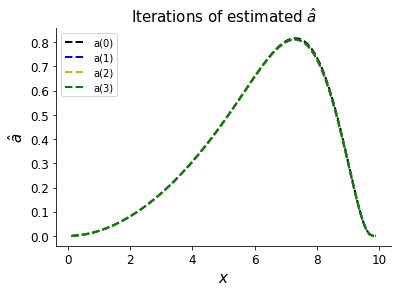} }}%
    \caption{Finite Difference Method for $U(X_{T})=\frac{4}{3}X_{T}^{\frac{1}{4}}$}%
    \label{fig:example}%
\end{figure}

This subsection has shown that the PDE method converges to the true
value efficiently. Nevertheless, there are a few shortcomings to this
approach: 
\begin{itemize}
\item The PDE approach requires tedious algebraic manipulation before implementation.
In particular, even when using the same utility function, the preliminary
computations have to be redone if we switch to a different penalty
function. 
\item In general, PDE approaches suffer from the curse of dimensionality.
As the dimension of the problem becomes higher, the computational
complexity increases exponentially and the approach becomes infeasible.
Although the PDE approach suffices for our current problem as the
wealth process is only one-dimensional, it may not be feasible for
other problems arising from multidimensional stochastic differential
games. 
\end{itemize}
For these two reasons, in the next subsection we develop a numerical
scheme based on Monte Carlo simulations, which can be potentially
useful for high-dimensional problems or in the case of complex penalty
functions.

\subsection{Monte Carlo method \label{subsec:Monte-Carlo-Method} }

In this section, we implement a Regression Monte Carlo scheme to solve
the same robust portfolio allocation problems. \citet{carriere1996valuation}
introduced the Regression Monte Carlo approach to solve optimal stopping
problems for any Markovian process in discrete time. In particular,
he used non-parametric regression techniques. Later, \citet{tsitsiklis2001regression}
and \citet{longstaff2001valuing} used a similar scheme with ordinary
least squares (a.k.a.\! Least Squares Monte Carlo) to value American
options, respectively by value iteration and by performance iteration
(see for example \citealt{denault2017simulation}). Since then, Regression
Monte Carlo has become a popular tool in option pricing and more generally
for solving discrete-time stochastic control problems in finite horizon.

First of all, we discretize the time interval $[0,T]$ into $N$ time
steps with a constant step size $\Delta t=\frac{T}{N}$. Using the
Euler scheme on the logarithm of the state variable, one obtains the
following dynamics for the discrete-time wealth $X_{n}$: 
\begin{alignat}{1}
X_{0} & =x\nonumber \\
X_{n+1} & =X_{n}\exp\left(\left[(\alpha_{n}^{\intercal} (\mu - \mathbf{r}) + r -\frac{1}{2} \alpha_{n}^{\intercal} \Sigma_{n}\alpha_{n} \right]\Delta t+\alpha_{n}^{\intercal} (\Sigma_{n})^{\frac{1}{2}} \Delta W_{n}\right)\,\,,\,n\in[0,N-1]\label{eq:dynamic_discrete}
\end{alignat}

and the discretized form of our value is 
\begin{alignat}{1}
\bar{v}(0,X_{0}) & =\adjustlimits\sup_{\alpha\in\mathcal{A}}\inf_{\Sigma \in\mathcal{B}}\Bigl\{\mathbb{E}\bigl[\lambda_{0}\sum_{n=0}^{N-1}F(\Sigma_{n})\Delta t+U(X_{N})\bigl|X_{0}=x\bigr]\Bigr\}\,.\label{eq:value_discrete}
\end{alignat}
As we have proved in \prettyref{sec:The-Dynamic-Programming}, this
value function satisfies the DPP: 
\begin{alignat}{1}
\bar{v}(N,X_{N}) & =U(X_{N})\nonumber \\
\bar{v}(n,X_{n}) & =\adjustlimits\sup_{\alpha\in\mathcal{A}}\inf_{\Sigma \in\mathcal{B}}\Bigl\{\lambda_{0}F(\Sigma_{n})\Delta t+\mathbb{E}\bigl[\bar{v}(n+1,X_{n+1})\bigl|\mathcal{F}_{n}\bigr]\Bigr\}\,\,,\,n\in[0,N-1]\,.\label{eq:DPP_discrete}
\end{alignat}

\subsubsection{Control randomization}

Inspired by the Dynamic Programming Principle, we can start from the
known terminal condition and compute the value functions backward
in time recursively. Equation \eqref{eq:DPP_discrete} involves a
conditional expectation, which cannot be computed explicitly. Instead,
one can for example use a least squares regression to approximate $\mathbb{E}\bigl[\bar{v}(n+1,X_{n+1})\bigl|\mathcal{F}_{n}\bigr]$
with a polynomial basis function. The obstacle in the implementation
is that we are not able to simulate the paths $X_{n}$ forward, since
the dynamics of the state variable depends on the uncertain controls.
Following \citet{kharroubi2014numerical}, one way to tackle this
problem is an initial randomization of the controls, i.e., we choose
an arbitrary initial distribution for the controls and simulate the
$X_{n}$ with these dummy $\alpha_{n}$ and $\Sigma_{n}$ , before
including these dummy controls in the regressors of the least-squares
regressions.

Proofs of the convergence and error bounds for standard Regression
Monte Carlo are available in \citet{clement2002analysis} and \citet{beutner2013fast}
for example. In the case of controlled dynamics, \citet{kharroubi2015discretization}
analyzed the time-discretization error, and \citet{kharroubi2014numerical}
investigated the projection error generated by approximating the conditional
expectation by basis functions for the control randomization scheme.
Recently, alternative randomization schemes have been proposed in
the literature, such as \citet{ludkovski2019simulation}, \citet{balata2018regress},
\citet{hure2018applications} or \citet{shen2019bsbu}, which are
more amenable to comprehensive convergence proofs, see \citet{balata2017regress}
and \citet{hure2018convergence}. Nevertheless, the classical control
randomization scheme retains some advantages, such as the ease
with which it can handle switching costs, as shown in \citet{zhang2019simulation}.

For the choice of basis function $\phi$, we can use a polynomial
function in $X_{n},\alpha_{n},\Sigma_{n}$, and let $\phi=\sum_{k=0}^{K}\beta_{k}\phi_{k}$.
Once we complete the regression, we can approximate the conditional
expected value function $\mathbb{E}\bigl[\bar{v}(n+1,X_{n+1})\bigl|\mathcal{F}_{n}\bigr]$
in \eqref{eq:DPP_discrete} by $\phi(\hat{\beta};X_{n},\alpha_{n},\Sigma_{n})$.
For the $m$th simulation path, we can find the optimal controls by:
\begin{alignat*}{1}
\hat{\Sigma}_{n}^{m} & =\arg\min_{\Sigma_{n}^{m}}\Bigl\{\lambda_{0}F(\Sigma_{n}^{m})\Delta t+\phi(\hat{\beta};X_{n}^{m},\alpha_{n}^{m},\Sigma_{n}^{m})\Bigr\},\\
\hat{\alpha}_{n}^{m} & =\arg\max_{\alpha_{n}^{m}}\Bigl\{\lambda_{0}F(\hat{\Sigma}_{n}^{m})\Delta t+\phi(\hat{\beta};X_{n}^{m},\alpha_{n}^{m},\hat{\Sigma}_{n}^{m}(\alpha_{n}^{m}))\Bigr\}.
\end{alignat*}
The complete process is shown in \textbf{Algorithm \ref{alg:Control-Randomization}.}

\begin{algorithm}[h]
\textbf{Backward Regression:} 
\begin{enumerate}
\item Choose an initial distribution and generate initial random controls
accordingly.
\item Generate $M$ paths of state variable $X_{n}$. The $m$th path starts
from the initial condition $X_{0}^{m}=x$, evolves following the dynamics
with $\{\alpha_{n}^{m},\Sigma_{n}^{m}\}_{n=0}^{N-1}$ and assign $\bar{v}(N,X_{N}^{m})=U(X_{N}^{m})$. 

\item  For $n=N-1:0$ \textbf{do}
\begin{enumerate}
\item Regress $\bigl\{\bar{v}(n+1,X_{n+1}^{m})\bigr\}_{m=1}^{M}$ on $\bigl\{ X_{n}^{m},\alpha_{n}^{m},\Sigma_{n}^{m}\bigr\}_{m=1}^{M}$
, and get the regression coefficients $\bigl\{\hat{\beta}_{n+1}^{k}\bigr\}_{k}$ 
\item Find the optimal controls $\hat{\alpha}_{n}^{m},\hat{\Sigma}_{n}^{m}$
by $\arg\max_{\alpha}\min_{\Sigma}\Bigl\{\lambda_{0}F(\Sigma_{n}^{m})\Delta t+\sum_{k=0}^{K}\hat{\beta}_{n+1}^{k}\phi_{k}(X_{n,}^{m}\alpha_{n}^{m},\Sigma_{n}^{m})\Bigr\}$ 
\item The value function at time step $n$ is $\bar{v}(n,X_{n}^{m})=\lambda_{0}F(\hat{\Sigma}_{n}^{m})\Delta t+\sum_{k=0}^{K}\hat{\beta}_{n+1}^{k}\phi_{k}(X_{n,}^{m}\hat{\alpha}_{n}^{m},\hat{\Sigma}_{n}^{m})$ 
\end{enumerate}

\item The value function $\bar{v}(0,x)=\frac{1}{M}\sum_{m=1}^{M}\bar{v}(0,X_{0}^{m})$ 
\end{enumerate}
$\,$

\textbf{Forward Resimulation:} 
\begin{enumerate}
\item Set the initial condition $\tilde{X}_{0}^{m}=x$ 
\item For $n=0:N-1$ 
\begin{enumerate}
\item Find the optimal controls $\tilde{\alpha}_{n}^{m},\tilde{\Sigma}_{n}^{m}$
by $\arg\max_{\alpha}\min_{\Sigma}\Bigl\{\lambda_{0}F(\Sigma_{n})\Delta t+\sum_{k=0}^{K}\hat{\beta}_{n+1}^{k}\phi_{k}(\tilde{X}_{n,}^{m}\alpha_{n},\Sigma_{n})\Bigr\}$,
using the regression coefficients obtained in the backward part and
the new state variable $\tilde{X}_{n}^{m}$. 
\item The state variable at time step $n+1$ is 
$\tilde{X}_{n+1}^{m}=\tilde{X}_{n}^{m}\exp\biggl\{\Bigl[(\tilde{\alpha}_{n}^{m})^{\intercal}(\mu- \mathbf{r})+r - \dfrac{1}{2}(\tilde{\alpha}_{n}^{m})^{\intercal} \tilde{\Sigma}_{n}^{m} \tilde{\alpha}_{n}^{m}   \Bigr]\Delta t+(\tilde{\alpha}_{n}^{m})^{\intercal}(\tilde{\Sigma}_{n}^{m})^{\frac{1}{2}}\Delta W_{n}\biggr\}$ 
\end{enumerate}
\item The forward simulated value function $\bar{v}_{f}(0,x)=\frac{1}{M}\sum_{m=1}^{M}\Bigl[\lambda_{0}\sum_{n=0}^{N-1}F(\tilde{\Sigma}_{n}^{m})\Delta t+U(\tilde{X}_{N}^{m})\Bigr]$\caption{Control Randomization\label{alg:Control-Randomization}}
\end{enumerate}
\end{algorithm}

\subsubsection{Logarithmic utility function}

We first consider an example with 1 risky asset. When the utility
function is logarithmic and the penalty function is $\lambda_0 F(\sigma_{t}^{2})=\lambda_{0}(\sigma_{t}^{2})^{2}$, 
we choose the following basis function 
\[
\sum_{k=0}^{K}\beta_{n+1}^{k}\phi_{k}(X_{n,}\alpha_{n},\sigma_{n})=\beta_{0}+\beta_{1}\ln(X_{n})+\beta_{2}\alpha_{n}+\beta_{3}\alpha_{n}\sigma_{n}+\beta_{4}\sigma_{n}^{2}\alpha_{n}^{2}.
\]
To find the optimal controls, we differentiate $\lambda_{0}F(\sigma_{n}^{2})\Delta t+\sum_{k=0}^{K}\beta_{n+1}^{k}\phi_{k}(X_{n,}\alpha_{n},\sigma_{n})$
with respect to $\alpha_{n}$ and $\sigma_{n}^{2}$, then we can get
the optimal controls by solving the following polynomial equation
\[
4\lambda_{0}dt\hat{\sigma}_{n}^{6}+\frac{\beta_{2}\beta_{3}}{2\beta_{4}}\hat{\sigma}_{n}+\frac{\beta_{2}^{2}}{2\beta_{4}}=0.
\]
With $\beta_{4}<0$, there exists a real positive root. We can see
the optimal controls are constants for each step, being independent
of the state variable $X_{n}$, this is the same as our observation
in the analytical solution.

We used $M=5\times10^{6}$ paths, $T=1$ and step size $\Delta t=\frac{1}{50}$
in the simulation, with the parameters $x_0 = 5, r = 0.015, \lambda_0=10$. Figure \ref{fig:analytical function value x05}
shows the backward regression values, forward resimulation values and
true values as we change the parameter $\mu$. Figure \ref{fig:Compare-forward-MC and PDE}
compares the forward resimulation values, finite difference results and
true values as we change the parameter $\mu$.  It shows that both the PDE
and Monte Carlo approach the true value in this example.

\begin{figure}[h]
\begin{minipage}[t]{0.5\columnwidth}%
\includegraphics[scale=0.45]{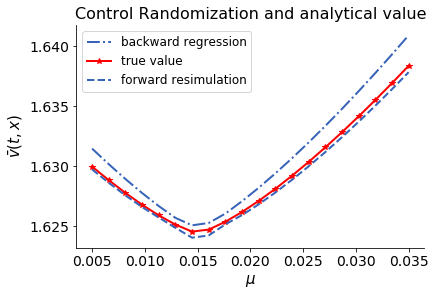}\caption{$U(X_{T})=\ln(X_{T}), S_{t}\in\mathbb{R}^1$ \label{fig:analytical function value x05}}
\end{minipage}%
\begin{minipage}[t]{0.5\columnwidth}%
\includegraphics[scale=0.45]{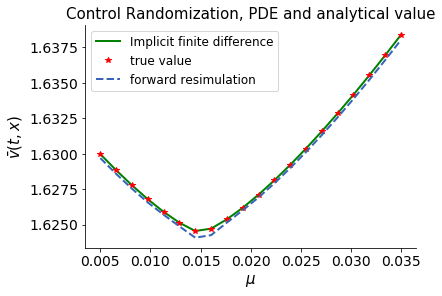}\caption{$U(X_{T})=\ln(X_{T}), S_{t}\in\mathbb{R}^1$ \label{fig:Compare-forward-MC and PDE}}
\end{minipage}
\end{figure}

\begin{figure}[h]
\begin{minipage}[t]{0.5\columnwidth}%
\includegraphics[scale=0.45]{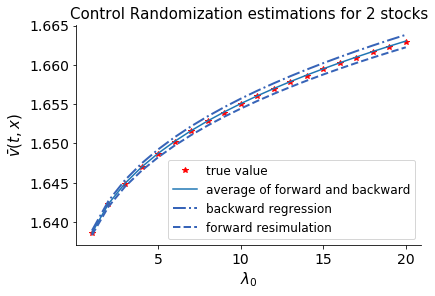}
\caption{$U(X_{T})=\ln(X_{T})$, $S_{t}\in\mathbb{R}^{2}, x_{0}=5,$\\ $r=0.015,\mu_{1}=0.035,\mu_{2}=0.045$ 
\label{fig:MC for 2 stocks log}}
\end{minipage}%
\begin{minipage}[t]{0.5\columnwidth}%
\includegraphics[scale=0.45]{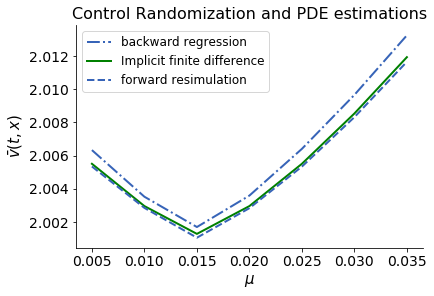} 
\caption{$U(X_{T})=\frac{4}{3}X_{T}^{\frac{1}{4}}$\label{fig:power MC 5million}}
\end{minipage}
\end{figure}

For the example with $2$ risky assets, we use the logarithmic utility function
and the penalty function $F(\Sigma_{t})=\lambda_{0}\left\Vert \Sigma_{t}\right\Vert _{2}^{2}$.
We choose the following basis function in this case:
\begin{multline*}
\sum_{k=0}^{K}\beta_{n+1}^{k}\phi_{k}(X_{n}, \alpha_{n}^1, \alpha_{n}^2, \sigma_{n}^1,  \sigma_{n}^2, \rho_n)
=\beta_{0}+\beta_{1}\ln(X_{n})+\beta_{2}\alpha_n^{1}+\beta_{3}\alpha_n^{2}+\beta_{4}(\alpha_n^{1})^{2} (\sigma_n^{1})^{2} + \beta_{5}(\alpha_n^{2})^{2} (\sigma_n^{2})^{2} \\
+\beta_{6}\alpha_n^{1}\alpha_n^{2}\sigma_n^{1}\sigma_n^{2}\rho_n+\beta_{7}(\sigma_n^{1})^{4}+\beta_{8}(\sigma_n^{2})^{4}+\beta_{9}(\sigma_n^{1})^{2} (\sigma_n^{2})^{2}\rho_n^{2},
\end{multline*}
where $\sigma_n^1, \sigma_n^2$ are the volatilities of the two assets and $\rho_n$ is the correlation between the assets. We can differentiate $\lambda_{0}\left\Vert \Sigma_{t}\right\Vert _{2}^{2}\Delta t+
\sum_{k=0}^{K}\beta_{n+1}^{k}\phi_{k}(X_{n}, \alpha_{n}^1, \alpha_{n}^2, \sigma_{n}^1,  \sigma_{n}^2, \rho_n)$ to get the optimal controls. In practice, we always have
$\hat{\sigma}_n^1, \hat{\sigma}_n^2 > 0$, but we need to truncate $\hat{\rho}_n $ to $[-1, 1]$.
The optimal controls are also constants for each step as in the 1-asset case. 

In the implementation, we use $M=4\times10^{6}$ paths, $T=1$ and step size $\Delta t=\frac{1}{50}$. The result is provided in Figure \ref{fig:MC for 2 stocks log}. This plot compares the backward
regression values, forward resimulation values and the analytical values, and it shows
how the values change w.r.t. the penalty strength $\lambda_{0}$. From our observation, the average of the forward and backward results yields an even better estimate. 

We can observe from Figure \ref{fig:analytical function value x05}
and \ref{fig:MC for 2 stocks log} that, as claimed in \citet{kharroubi2014numerical},
the value function estimated at the end of the backward loop serves
as an upper bound for the true value, while the one obtained from
the forward resimulation serves as a lower bound and has a smaller
error than the upper bound.

\subsubsection{Power utility function}

Here we show a 1-asset example with power utility. When the utility function is $U(X_{T})=\frac{4}{3}X_{T}^{\frac{1}{4}}$
and the penalty function $\lambda_0 F(\sigma_{t}^{2})=\lambda_{0}(\sigma_{t}^{2})^{2}$,
we choose the basis function 
\begin{equation}
\phi=\beta_{0}+\beta_{1}X_{n}^{\frac{1}{4}}+\beta_{2}X_{n}^{\frac{1}{4}}\alpha_{n}+\beta_{3}X_{n}^{\frac{1}{4}}\alpha_{n}\sigma_{n}+\beta_{4}X_{n}^{\frac{1}{4}}\alpha_{n}^{2}\sigma_{n}^{2}.\label{eq:power basis function}
\end{equation}

To find the optimal controls, we differentiate $\lambda_{0}F(\sigma_{n}^{2})\Delta t+\sum_{k=0}^{K}\beta_{n+1}^{k}\phi_{k}(X_{n,}\alpha_{n},\sigma_{n})$
and then get the polynomial equation \eqref{eq:polynomial for power utility}
for each path. We can see the optimal controls $\hat{\alpha}_{n}$
and $\hat{\sigma}_{n}$ depend on $X_{n}$ in this case. 
\begin{equation}
\beta_{2}^{2}X_{n}^{\frac{1}{4}}+\beta_{2}\beta_{3}X_{n}^{\frac{1}{4}}\sigma_{n}+8\beta_{4}\lambda_{0}dt\sigma_{n}^{6}=0\label{eq:polynomial for power utility}
\end{equation}

Figure \eqref{fig:power MC 5million} shows Monte Carlo and finite
difference approximations for a range of drifts $\mu$, with $x_{0}=5,, r=0.015, \lambda_{0}=10$,
$M=5\times10^{6}$, $N=65$. We can see that the PDE estimates lie
within the Monte Carlo bounds and that the forward simulation values
almost overlap the PDE estimations. Although we do not have the analytical
solution for this power utility case, these plots suggest that we
are able to estimate the true values accurately with both Control
Randomization and Finite Difference.

In both the logarithmic and power utility cases, the forward resimulation
always performs better than the backward loop estimates. That is because
the forward resimulation only suffers from one source of error, the
optimal control estimation, while the backward regression suffers
more directly from regression error (see \citealt{kharroubi2014numerical}).
So the forward simulation result is a better estimator of the true
value and is the one we use for comparison with the analytical and
PDE approaches.

From the results above, we can see that for these robust portfolio
allocation problems with one single risky asset, both PDE and Monte
Carlo methods provide accurate estimates, with the PDE estimates being
slightly better overall. Both methods can be considered for solving
robust portfolio allocation problems in practice. Some difficulties
with the Monte Carlo approach are the choice of the basis and the
number of Monte Carlo paths needed for a stable convergence. Still,
the Monte Carlo would be the method of choice for more realistic portfolio
allocation with multiple risky assets (see \citealt{zhang2019simulation}),
as the PDE approach could quickly become computationally intractable
in this situation.

\subsection{Generative Adversarial Networks \label{subsec:GANs} }

In this section, we devise a GAN-based algorithm to solve the two-player zero-sum differential game. 

Generative Adversarial Networks were introduced in \citet{goodfellow2014generative}. A GAN is a combination of two competing (deep) neural networks: a generator and a discriminator. The generator network tries to generate data that looks similar to the training data, and the discriminator network tries to tell the real data from the fake data. The idea behind GANs is very similar to the robust optimization problem studied in our paper: GANs can be interpreted as minimax games between the generator and the discriminator, whereas our problem is a  minimax game between the agent who controls the portfolio allocation and the market who controls the covariance matrix. Inspired by this connection, we propose the following GAN-based algorithm. 

Our GANs are composed of two neural networks; one generates $\alpha$ ($\alpha$-generator),
the other generates $\sigma$ ($\sigma$-generator). The two networks have conflicting goals,
the $\alpha$-generator tries to maximize the expected utility,
while the $\sigma$-generator wants to minimize the expected utility.
They compete against each other during the training. Because we have
two networks with different objectives, it cannot be trained as
a regular neural network. Each training iteration is divided into
two phases: In the first phase, we train the $\alpha$-generator,
with the loss function $L_{1}=-\mathbb{E}\left[U(X_{T})+\lambda_{0}\int_{t}^{T}F(\sigma^2_{s})ds\right]$.
Then the back-propagation only optimizes the weights of the $\alpha$-generator.
In the second phase, given the output $\alpha$ from the $\alpha$-generator,
we train the $\sigma$-generator with a loss function $L_{2}=\mathbb{E}\left[U(X_{T})+\lambda_{0}\int_{t}^{T}F(\sigma^2_{s})ds\right]$.
During this phase, the weights of the $\alpha$-generator are frozen
and the back-propagation only updates the weights of the $\sigma$-generator.
In a zero-sum game, the $\alpha$-generator and $\sigma$-generator
constantly try to outsmart each other. As training advances, the game
may end up at a Nash Equilibrium. 

A demonstration of the simplified  network architecture is illustrated in Figure \ref{fig:demonstration of the adversarial network}. The blue part on the left of Figure \ref{fig:demonstration of the adversarial network} is the $\alpha$-generator. For each time step $n$, we construct a network ($\mathcal{A}_{n}$), with the input $X_n$ and parameter $\sigma_n$, the network generates output $\alpha_n$. With the dynamics of wealth \eqref{eq:dynamic_discrete}, we can continue this process until we get the terminal wealth $X_N$. Once we get the output $\{\alpha_n\}_{n\in [1,N]}$, we can use them as parameters for the $\sigma$-generator (the green part in the figure). In the $\sigma$-generator, similarly, we have one network ($\mathcal{S}_{n}$) for each time step $n$. With the input $X_n$ and parameter $\alpha_n$, we can generate $\sigma_n$. At the end of this phrase, the sequence $\{\sigma_n\}_{n\in [1,N]}$ will be fed into the $\alpha$-generator as parameters as well.  We have  summarized this training process for 1-asset examples in Algorithm  \ref{alg:Deep_Learning}.

In the implementation, we choose the parameters $T=1, r=0.015, \mu=0.035$.  The training data has a sample size $M =200,000$.  We discretize  the investment process into $N=65$ time steps. The deep neural network for each time step contains $4$ hidden layers, using Leaky ReLU as the activation function. For the $\sigma$ generator, to ensure the positivity of the output, we use Leaky Sigmoid as the activation function of the output layer. It is defined as $\text{LeakySigmoid}_\beta(z ) = \frac{1}{1+e^{-x}}\mathbbm{1}(x \leq \beta) +
\left[ \frac{e^{-\beta}}{(1+e^{-\beta})^2} \times (x-\beta)+\frac{1}{1+e^{-\beta}} \right]  \mathbbm{1}(x > \beta)$. Its shape is similar to Sigmoid, but its range is $[0, +\infty]$.
We train the first $100$ epochs with a  learning rate $5\times10^{-4}$, and then we train another $50$ epochs with a decreased learning rate $1\times10^{-4}$.

\begin{figure}[H]
{\centering

\includegraphics[scale=0.28]{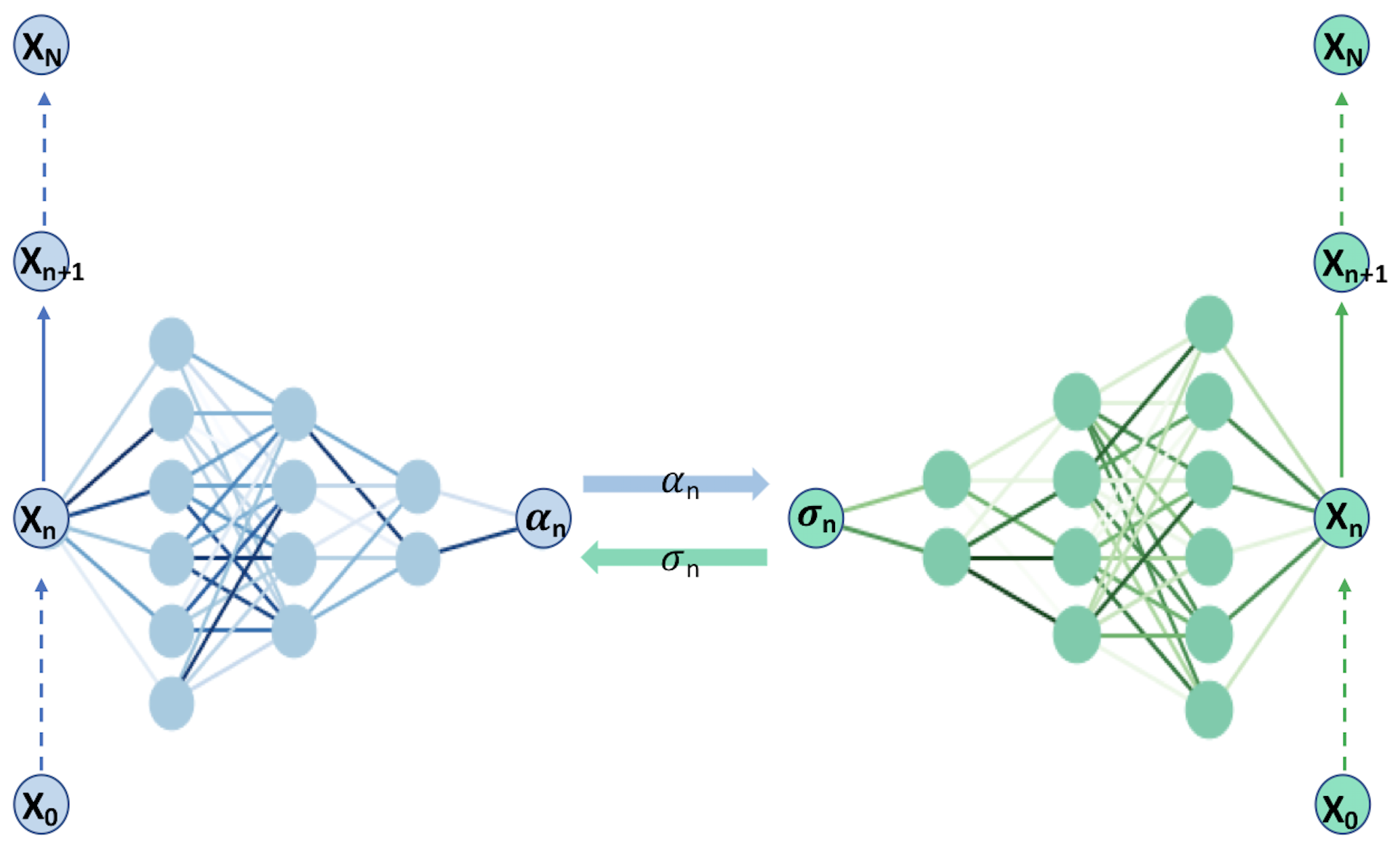}

}
\caption{A demonstration of the adversarial networks\label{fig:demonstration of the adversarial network}}
\end{figure}

\begin{algorithm}[h]
Given the initial condition $X_{0}^{m}=x_{0},\forall m\in[1,M]$,
and the initial starting point of $\sigma=\{\sigma_{n}\}_{n,m=1}^{N-1,M}$:

\For{epoch $=1: \text{number of epochs}$}{
\medskip{}
\textbf{Phase 1}: train the $\alpha$-generator

\For{time step $n=1 : N-1$}{

With the network $\mathcal{A}_{n}$, inputs $\{X_{n}\}_{m=1}^{M}$, parameters $\{\sigma_{n}\}_{m=1}^{M}$,
outputs $\alpha_{n,m}=\mathcal{A}_{n}(X_{n,m},\sigma_{n,m})$;

$X_{n+1,m}=X_{n,m}\exp\left\{ \left(\alpha_{n,m}(\mu-r)+r-\frac{1}{2}\alpha_{n,m}^{2}\sigma_{n,m}^{2}\right)\Delta t+\alpha_{n,m}\sigma_{n,m}\Delta W_{n,m}\right\} $;
}

Loss function $L_{1}=-\frac{1}{M}\sum_{m=1}^{M}\biggl\{ U(X_{N,m})+\lambda_{0}\sum_{n=1}^{N-1}F(\sigma_{n,m}^{2})\Delta t\biggr\}$;

Train the neurons with an Adam optimizer and update $\mathcal{A}_{n}, n\in[1,N-1]$.

\medskip{}

\textbf{Phase 2}: train the $\sigma$-generator

\For{time step $n=1 : N-1$}{

With the network $\mathcal{S}_{n}$, inputs $\{X_{n}\}_{m=1}^{M}$, parameters $\{\alpha_{n}\}_{m=1}^{M}$,
outputs $\sigma_{n.m}=\mathcal{S}_{n}(X_{n,m},\alpha_{n,m})$;

$X_{n+1,m}=X_{n,m}\exp\left\{ \left(\alpha_{n,m}(\mu-r)+r-\frac{1}{2}\alpha_{n,m}^{2}\sigma_{n,m}^{2}\right)\Delta t+\alpha_{n,m}\sigma_{n,m}\Delta W_{n,m}\right\} $;
}

Loss function $L_{2}=\frac{1}{M}\sum_{m=1}^{M}\biggl\{ U(X_{N,m})+\lambda_{0}\sum_{n=1}^{N-1}F(\sigma_{n,m}^{2})\Delta t\biggr\}$;

Train the neurons with an Adam optimizer and update $\mathcal{S}_{n}, n\in[1,N-1]$.

}

\caption{\label{alg:Deep_Learning}Training Generative Adversarial Networks}
\end{algorithm}

We now assess the quality of Algorithm  \ref{alg:Deep_Learning}. Firstly, we use a utility function 
$U(X_{T})=\ln(X_{T})$ and a cost function $\lambda_0 F(\sigma_{t}^{2})=\lambda_0 (\sigma_{t}-\sigma_{0})^{2}$. Assuming the portfolio has an initial wealth $x_{0}=5$, the analytical solution facilitates numerical comparison. 
Figures \ref{fig:GANs_value} compares the learned value functions with the true values for a range of $\lambda_0$. It shows good accuracy of the learned functions versus the true ones. The errors are of magnitude $10^{-5}$. The loss function $L_2$ during the training is presented in Figure \ref{fig:GANs_loss}. Unlike the trend in training regular deep neural networks, the loss function is not monotonically decreasing. As we can see, the minimizer was dominating the competition at the beginning, the loss function decreasing rapidly. Then the maximizer caught up, the loss function increased for a while and finally converged to the true value. 

In the second example, we use a utility function  $U(X_{T})=3X_{T}^{\frac{1}{4}}$ and a cost function $F(\sigma_{t}^{2})=(\sigma_{t}^{2})^2$. We set $\lambda_0 = 10$ in this case and estimate the value functions  for a range of $x_0$.
Since we do not have access to the true values for power utility, we compare the GANs estimated values with the PDE estimations in \ref{fig:GANs_value_power}. The loss function $L_1$ for $x_0 = 6$ during the training is presented in Figure \ref{fig:GANs_loss_power}.

\begin{figure}[H]
\subfloat[value functions estimated with GANs, $U(X_{T})=\ln(X_{T})$\label{fig:GANs_value}]
{\includegraphics[scale=0.45]{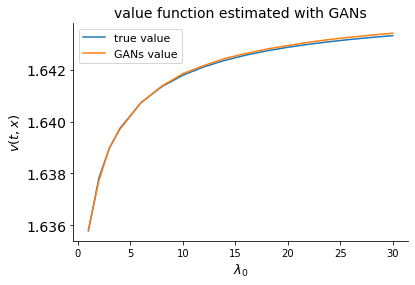}} 
\hfill{}
\subfloat[the minimizer's loss function, $\lambda_{0}=10$ \label{fig:GANs_loss}]
{\includegraphics[scale=0.45]{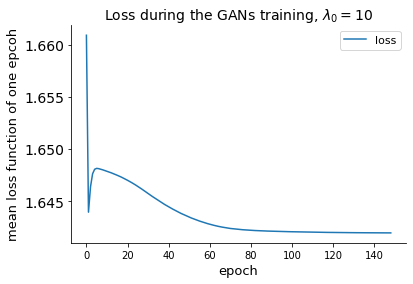}}
\caption{}
\end{figure}

\begin{figure}[H]
\subfloat[value functions estimated with GANs, $U(X_{T})=3X_{T}^{\frac{1}{4}}$ \label{fig:GANs_value_power}]
{\includegraphics[scale=0.45]{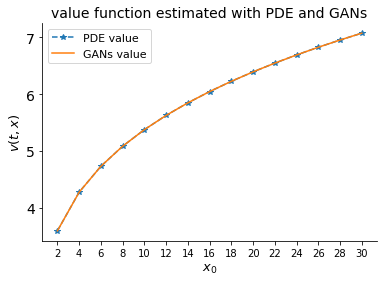}} 
\hfill{}
\subfloat[the maximizer's loss function, $x_{0}=6$ \label{fig:GANs_loss_power}]
{\includegraphics[scale=0.45]{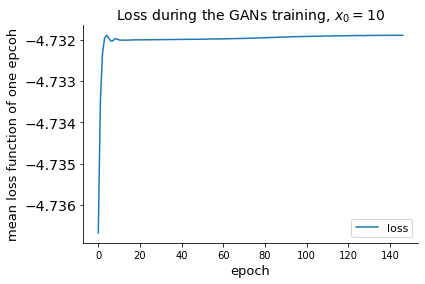}}
\caption{}
\end{figure}

Despite the promising results, a limitation of GANs, shared with deep neural networks in general, is the sensitivity of training to the chosen parameters. On difficult problems, fine-tuning the hyper-parameters of the GAN to facilitate training might require a lot of effort. One standard strategy for stabilizing training is to carefully design the model, 
either by adopting a proper architecture \citep{radford2020unsupervised} or by selecting an easy-to-optimize objective function \citep{salimans2016improved}. In spite of this caveat, GANs can be considered a viable contender to the more classical Monte Carlo methods of subsection \ref{subsec:Monte-Carlo-Method} for robust portfolio allocation involving multiple risky assets, and deserve further investigation.

\section{Conclusion}

In this paper, we interpreted a robust portfolio optimization problem
as a two-player zero-sum stochastic differential game. We have proven that the value function is the unique
viscosity solution of a Hamilton--Jacobi--Bellman--Isaacs equation,
and satisfies the Dynamic Programming Principle. 
We  compared the performance of the robust and non-robust portfolios with both Monte Carlo simulation and empirical market data. Under market shocks, our robust mechanism can prevent huge losses. By choose the $\lambda_0$  properly, the robust portfolios have a higher expected utility than the non-robust one.
In addition to the finite difference method, we provide control randomization and GANs algorithms to estimate the value function. 
These two methods can enrich quantitative techniques for solving robust portfolio optimization problems. 
Both of them have demonstrated high accuracy in the numerical results.   

\section*{Acknowledgements\label{sec:acknowledgements}}

The Centre for Quantitative Finance and Investment Strategies has
been supported by BNP Paribas. Ivan Guo has been partially supported by
the Australian Research Council Discovery Project DP170101227.

\bibliographystyle{chicago}
\bibliography{zerosum_2}

\appendix
\section{Appendices}

\subsection{Proof of Proposition \prettyref{prop:concave in alpha}\label{sec:Appendix-proof in section 4}}
\begin{proof}
First of all, define $w(t,x)\coloneqq\sup_{\alpha\in\mathcal{A}}\mathbb{E}^{t,x}\left[U(X_{T}^{\alpha,{\scriptscriptstyle \Sigma}})\right]$,
$(t,x)\in[0,T]\times\mathbb{R}$. All the assumptions on $\alpha,U,X_{t}$
hold for $w(t,x)$, except that we assume the covariance $\Sigma$
for time $u\in[t,T]$ is a fixed known process in $\mathcal{B}$.
An argument used in \citet[p.52]{pham2009continuous} proved that,
when the utility function $U(\cdot)$ is continuous, increasing and
concave on $\mathbb{R},$ $w(t,\cdot)$ is also increasing and concave
in $x$, $\forall t\in[0,T]$.

For any fixed $\Sigma\in\mathcal{B}$, we define a function $q(t,x)$
by 
\begin{alignat*}{1}
q(t,x) & =\sup_{\alpha\in\mathcal{A}}\mathbb{E}^{t,x}\left[U(X_{T}^{\alpha,{\scriptscriptstyle \Sigma}})+\lambda_{0}\int_{t}^{T}F(\Sigma_{s})ds\right],\\
 & =w(t,x)+\mathbb{E}\bigl[\lambda_{0}\int_{t}^{T}F(\Sigma_{s})ds\bigr].
\end{alignat*}
Then $q(t,x)$ is also concave in $x$ for $t\in[0,T]$. %
We define 
\[
L(t,x,\alpha_{t},\Sigma_{t})\coloneqq\lambda_{0}F(\Sigma_{t})+(\alpha_{t}^{\intercal}\mu+r-\alpha_{t}^{\intercal}\mathbf{r})x\frac{\partial q}{\partial x}(t,x)+\frac{1}{2}tr\left(\alpha_{t}^{\intercal}\Sigma_{t}\alpha_{t}x^{2}\frac{\partial^{2}q}{\partial x^{2}}(t,x)\right).
\]
In addition to Assumption \prettyref{assu:F convex}, we know $L$
is convex in $\Sigma_{t}$ and concave in $\alpha_{t}$. By \citet[Theorem 49.A]{zeidler2013nonlinear},
there exists a saddle point $(\alpha_{t}^{*},\Sigma_{t}^{*})\in A\times B$,
such that 
\begin{equation}
\inf_{\Sigma_{t}\in B}L(t,x,\alpha_{t}^{*},\Sigma_{t})=L(t,x,\alpha_{t}^{*},\Sigma_{t}^{*})=\sup_{\alpha_{t}\in A}L(t,x,\alpha_{t},\Sigma_{t}^{*}).\label{eq:saddle point}
\end{equation}
We know from \citet[Chapter 4.3]{pham2009continuous} that $q(t,x)$
is a viscosity solution of the HJB equation 
\begin{alignat*}{2}
\;\frac{\partial q}{\partial t}(t,x)+\sup_{\alpha_{t}}L(t,x,\alpha_{t},\Sigma_{t})=0, & \quad q(T,x)=U(x) & .
\end{alignat*}

Then $q^{*}(t,x)\coloneqq\sup_{\alpha\in\mathcal{A}}\mathbb{E}^{t,x}\left[U(X_{T}^{\alpha,{\scriptscriptstyle \Sigma^{*}}})+\lambda_{0}\int_{t}^{T}F(\Sigma_{s}^{*})ds\right]$
is a viscosity solution of the PDE 
\[
\frac{\partial q}{\partial t}(t,x)+\sup_{\alpha_{t}\in A}L(t,x,\alpha_{t},\Sigma_{t}^{*})=0,
\]
which is equivalent to 
\begin{equation}
\frac{\partial q}{\partial t}(t,x)+\inf_{\Sigma_{t}\in B}L(t,x,\alpha_{t}^{*},\Sigma_{t})=0\label{eq:inf HJB}
\end{equation}
due to the saddle point property \eqref{eq:saddle point}. Using arguments
similar to the ones in \citet[Chapter 4]{pham2009continuous}, the
function $\inf_{\Sigma\in\mathcal{B}}\mathbb{E}^{t,x}\left[U(X_{T}^{\alpha^{*},{\scriptscriptstyle \Sigma}})+\lambda_{0}\int_{t}^{T}F(\Sigma_{s})ds\right]$
is the unique viscosity solution of the HJB equation \eqref{eq:inf HJB}.
Therefore we have 
\[
\sup_{\alpha\in\mathcal{A}}\mathbb{E}^{t,x}\left[U(X_{T}^{\alpha,{\scriptscriptstyle \Sigma^{*}}})+\lambda_{0}\int_{t}^{T}F(\Sigma_{s}^{*})ds\right]=\inf_{\Sigma\in\mathcal{B}}\mathbb{E}^{t,x}\left[U(X_{T}^{\alpha^{*},{\scriptscriptstyle \Sigma}})+\lambda_{0}\int_{t}^{T}F(\Sigma_{s})ds\right].
\]

With $J(t,x,\alpha,\Sigma)=\mathbb{E}^{t,x}\left[U(X_{T}^{\alpha,{\scriptscriptstyle \Sigma}})+\lambda_{0}\int_{t}^{T}F(\Sigma_{s})ds\right]$,
then the inequality 
\begin{equation}
\inf_{\Sigma\in\mathcal{B}}\sup_{\alpha\in\mathcal{A}}J(t,x,\alpha,\Sigma)\leq\sup_{\alpha\in\mathcal{A}}J(t,x,\alpha,\Sigma^{*})=\inf_{\Sigma\in\mathcal{B}}J(t,x,\alpha^{*},\Sigma)\leq\sup_{\alpha\in\mathcal{A}}\inf_{\Sigma\in\mathcal{B}}J(t,x,\alpha,\Sigma)\label{eq:inequality}
\end{equation}
implies 
\[
\inf_{\Sigma\in\mathcal{B}}\sup_{\alpha\in\mathcal{A}}\mathbb{E}^{t,x}\left[U(X_{T}^{\alpha,{\scriptscriptstyle \Sigma}})+\lambda_{0}\int_{t}^{T}F(\Sigma_{s})ds\right]=\sup_{\alpha\in\mathcal{A}}\inf_{\Sigma\in\mathcal{B}}\mathbb{E}^{t,x}\left[U(X_{T}^{\alpha,{\scriptscriptstyle \Sigma}})+\lambda_{0}\int_{t}^{T}F(\Sigma_{s})ds\right].
\]

From Proposition \ref{prop001}, we have $\underline{u}(t,x)\leq\underline{v}(t,x)\leq\bar{v}(t,x)\leq\bar{u}(t,x)$.
Combining this with $\bar{u}(t,x)=\underline{u}(t,x)$, we obtained
the required equalities 
\[
\underline{u}(t,x)=\underline{v}(t,x)=\bar{v}(t,x)=\bar{u}(t,x).
\]
\end{proof}

\subsection{Proof of Proposition \prettyref{prop:continuous in x}\label{sec:Appendix-proof in section 5}}

\begin{proof}
Let $X_{T}^{{\scriptscriptstyle \Sigma,\Gamma}}$ and $\bar{X}_{T}^{{\scriptscriptstyle \Sigma,\Gamma}}$
be the solutions of the SDE \eqref{eq:dynamic of X} with initial
states $(t,x)$ and $(t,\bar{x})$ respectively, they are both controlled
by an arbitrary pair of admissible control and strategy processes
$(\Sigma,\Gamma)$. From Assumption \prettyref{assu:U}, we have 
\[
\Bigl|U(X_{T}^{{\scriptscriptstyle \Sigma,\Gamma}})-U(\bar{X}_{T}^{{\scriptscriptstyle \Sigma,\Gamma}})\Bigr|\leq Q(\left|X_{T}^{{\scriptscriptstyle \Sigma,\Gamma}}\right|,\left|\bar{X}_{T}^{{\scriptscriptstyle \Sigma,\Gamma}}\right|)\left|X_{T}^{{\scriptscriptstyle \Sigma,\Gamma}}-\bar{X}_{T}^{{\scriptscriptstyle \Sigma,\Gamma}}\right|.
\]
We have 
\begin{alignat}{1}
\Bigl|\mathbb{E}^{t,x}\left[U(X_{T}^{{\scriptscriptstyle \Sigma,\Gamma}})\right]-\mathbb{E}^{t,\bar{x}}\left[U(\bar{X}_{T}^{{\scriptscriptstyle \Sigma,\Gamma}})\right]\Bigr|\leq\mathbb{E}\Bigl|U(X_{T}^{{\scriptscriptstyle \Sigma,\Gamma}})-U(\bar{X}_{T}^{{\scriptscriptstyle \Sigma,\Gamma}})\Bigr|.
\end{alignat}
By the Cauchy-Schwarz inequality, 
\begin{alignat}{1}
\Bigl(\mathbb{E}\Bigl|U(X_{T}^{{\scriptscriptstyle \Sigma,\Gamma}})-U(\bar{X}_{T}^{{\scriptscriptstyle \Sigma,\Gamma}})\Bigr|\Bigr)^{2} & \leq\mathbb{E}\biggl[Q(\bigl|X_{T}^{{\scriptscriptstyle \Sigma,\Gamma}}\bigr|,\bigl|\bar{X}_{T}^{{\scriptscriptstyle \Sigma,\Gamma}}\bigr|)^{2}\biggr]\mathbb{E}\biggl[\left|X_{T}^{{\scriptscriptstyle \Sigma,\Gamma}}-\bar{X}_{T}^{{\scriptscriptstyle \Sigma,\Gamma}}\right|^{2}\biggr].\label{eq:value_2_1}
\end{alignat}
It is straightforward to check that there exist constants $C$, $m_{1},m_{2}$
and $\beta_{0}$ such that 
\begin{equation}
\mathbb{E}\biggl[Q(\bigl|X_{T}^{{\scriptscriptstyle \Sigma,\Gamma}}\bigr|,\bigl|\bar{X}_{T}^{{\scriptscriptstyle \Sigma,\Gamma}}\bigr|)^{2}\biggr]\mathbb{E}\biggl[\left|X_{T}^{{\scriptscriptstyle \Sigma,\Gamma}}-\bar{X}_{T}^{{\scriptscriptstyle \Sigma,\Gamma}}\right|^{2}\biggr]\leq C\mathbb{E}\biggl[\bigl|X_{T}^{{\scriptscriptstyle \Sigma,\Gamma}}\bigr|^{2m_{1}}+\bigl|\bar{X}_{T}^{{\scriptscriptstyle \Sigma,\Gamma}}\bigr|^{2m_{2}}\biggr]e^{2\beta_{0}(T-t)}\left|x-\bar{x}\right|^{2}.\label{eq:estimation_3}
\end{equation}
By the classical inequality $\mathbb{E}^{t,x}\Bigl[\max_{t\leq s\leq T}\bigl|X_{s}^{{\scriptscriptstyle \Sigma,\Gamma}}\bigr|^{2m}\Bigr]\leq{\color{blue}{\color{black}C_{T}}}(1+x^{2m})$
(e.g., \citet[Theorem 1.3.15]{pham2009continuous}), for arbitrary
control and strategy processes $\Gamma,\Sigma$, we have 
\begin{alignat}{1}
\Bigl|\mathbb{E}^{t,x}\left[U(X_{T}^{{\scriptscriptstyle \Sigma,\Gamma}})\right]-\mathbb{E}^{t,\bar{x}}\left[U(\bar{X}_{T}^{{\scriptscriptstyle \Sigma,\Gamma}})\right]\Bigr| & \leq\Phi(\left|x\right|,\left|\bar{x}\right|)\left|x-\bar{x}\right|,\label{eq:v_continuous}
\end{alignat}
where $C_{T},m$ are constants, and $\Phi$ is a polynomial function.

Next, for all bounded functions $\mathbb{E}^{t,x}\Bigl[\lambda_{0}\int_{t}^{T}F(\Sigma_{s})ds+U(X_{T}^{{\scriptscriptstyle \Gamma,\Sigma}})\Bigr]$
and $\mathbb{E}^{t,\bar{x}}\Bigl[\lambda_{0}\int_{t}^{T}F(\Sigma_{s})ds+U(\bar{X}_{T}^{{\scriptscriptstyle \Gamma,\Sigma}})\Bigr]$,
\begin{alignat}{1}
 & \biggl|\inf_{\Sigma}\mathbb{E}^{t,x}\Bigl[\lambda_{0}\int_{t}^{T}F(\Sigma_{s})ds+U(X_{T}^{{\scriptscriptstyle \Gamma,\Sigma}})\Bigr]-\inf_{\Sigma}\mathbb{E}^{t,\bar{x}}\Bigl[\lambda_{0}\int_{t}^{T}F(\Sigma_{s})ds+U(\bar{X}_{T}^{{\scriptscriptstyle \Gamma,\Sigma}})\Bigr]\biggr|\nonumber \\
\leq{} & \sup_{\Sigma}\biggl|\mathbb{E}^{t,x}\Bigl[\lambda_{0}\int_{t}^{T}F(\Sigma_{s})ds+U(X_{T}^{{\scriptscriptstyle \Gamma,\Sigma}})\Bigr]-\mathbb{E}^{t,\bar{x}}\Bigl[\lambda_{0}\int_{t}^{T}F(\Sigma_{s})ds+U(\bar{X}_{T}^{{\scriptscriptstyle \Gamma,\Sigma}})\Bigr]\biggr|,\label{eq:inequality_1}\\
 & \biggl|\sup_{\Gamma}\mathbb{E}^{t,x}\Bigl[\lambda_{0}\int_{t}^{T}F(\Sigma_{s})ds+U(X_{T}^{{\scriptscriptstyle \Gamma,\Sigma}})\Bigr]-\sup_{\Gamma}\mathbb{E}^{t,\bar{x}}\Bigl[\lambda_{0}\int_{t}^{T}F(\Sigma_{s})ds+U(\bar{X}_{T}^{{\scriptscriptstyle \Gamma,\Sigma}})\Bigr]\biggr|\nonumber \\
\leq{} & \sup_{\Gamma}\biggl|\mathbb{E}^{t,x}\Bigl[\lambda_{0}\int_{t}^{T}F(\Sigma_{s})ds+U(X_{T}^{{\scriptscriptstyle \Gamma,\Sigma}})\Bigr]-\mathbb{E}^{t,\bar{x}}\Bigl[\lambda_{0}\int_{t}^{T}F(\Sigma_{s})ds+U(\bar{X}_{T}^{{\scriptscriptstyle \Gamma,\Sigma}})\Bigr]\biggr|.\label{eq:inequality_2}
\end{alignat}
Under Assumptions \prettyref{assu:U} and \prettyref{assu:finiteness},
$\bar{v}(t,x)$ is bounded. Then we can write the difference between
the two value functions as: 
\begin{alignat}{1}
 & \Bigl|\bar{v}(t,x)-\bar{v}(t,\bar{x})\Bigr|\\
\leq{} & \sup_{\Gamma}\sup_{\Sigma}\biggl|\mathbb{E}^{t,x}\Bigl[\lambda_{0}\int_{t}^{T}F(\Sigma_{s})ds+U(X_{T}^{{\scriptscriptstyle \Gamma,\Sigma}})\Bigr]-\mathbb{E}^{t,\bar{x}}\Bigl[\lambda_{0}\int_{t}^{T}F(\Sigma_{s})ds+U(\bar{X}_{T}^{{\scriptscriptstyle \Gamma,\Sigma}})\Bigr]\biggr|.
\end{alignat}

In addition to the inequality \eqref{eq:v_continuous}, the value
function $\bar{v}(t,x)$ is locally Lipschitz continuous in $x$. 
\end{proof}

\subsection{Proof of \prettyref {thm:The-Dynamic-Programming}\label{sec:Appendix-proof of DPP}}

\begin{proof}
We use localization techniques here. Let $B_{k}=\{x\in\mathbb{R},x^{2}<k^{2}\}$,
let $\phi_{k}(x)$ be a function such that $\phi_{k}(x)=1$ on $B_{k}$,
and $\phi_{k}(x)=0$ outside $B_{k}$. Then we can define a new process
\begin{equation}
dX_{s}^{k}=\phi_{k}(X_{s}^{k})X_{s}^{k}\Bigl[(\alpha_{s}^{\intercal}\mu+r-\alpha_{s}^{\intercal}\mathbf{r})ds+\alpha_{s}^{\intercal}\sigma_{s}dW_{s}\Bigr],\label{eq:localSDE-1}
\end{equation}
starting from an initial condition $(t,x)\in[0,T]\times\mathbb{R}$.
Let $U^{k}(x)=\phi_{k+2}(x)U(x)$, then we can define the truncated
value function by 
\begin{equation}
\bar{v}^{k}(t,x)=\adjustlimits\sup_{\Gamma\in\mathcal{N}}\inf_{\Sigma\in\mathcal{B}}\Bigl\{\mathbb{E}^{t,x}\Bigl[\lambda_{0}\int_{t}^{T}F(\Sigma_{s})ds+U^{k}(X_{T}^{k,{\scriptscriptstyle \Gamma,\Sigma}})\Bigr]\Bigr\}.\label{eq:value2}
\end{equation}
In the above setting, the drift and volatility functions in the SDE
\eqref{eq:localSDE-1} are bounded, and the utility function in \eqref{eq:value2}
is bounded and Lipschitz continuous. Since all assumptions of \citet{fleming1989existence}
are satisfied, the localized value function $\bar{v}^{k}$ defined
in \eqref{eq:value2} satisfies the dynamic programming principle:
for $t\leq t+\theta\leq T$, 
\begin{equation}
\bar{v}^{k}(t,x)=\adjustlimits\sup_{\Gamma\in\mathcal{N}}\inf_{\Sigma\in\mathcal{B}}\Bigl\{\mathbb{E}^{t,x}\Bigl[\lambda_{0}\int_{t}^{t+\theta}F(\Sigma_{s})ds+\bar{v}^{k}(t+\theta,X_{t+\theta}^{k,{\scriptscriptstyle \Gamma,\Sigma}})\Bigr]\Bigr\}.\label{eq:DPP2}
\end{equation}
In this proof, $X_{t+\theta}^{k,{\scriptscriptstyle \Gamma,\Sigma}}$
and $X_{t+\theta}^{{\scriptscriptstyle \Gamma,\Sigma}}$ are the solutions
of SDE \eqref{eq:localSDE-1} and SDE \eqref{eq:dynamic of X} respectively,
both starting from $(t,x)$, controlled by processes $\Gamma,\Sigma$
for the time $u\in[t,t+\theta]$.

As $k\rightarrow\infty$, $\bar{v}^{k}(t,x)$ defined in \eqref{eq:value2}
approaches $\bar{v}(t,x)$ defined in \eqref{eq:upper}, then our
problem reduces to proving that the right hand side of \eqref{eq:DPP2}
converges to the right hand side of \eqref{eq:DPP1}.

Note that if $X_{s}^{k}$ is in $\overline{B_{k+1}}$, then $X_{u}^{k}$
is in $\overline{B_{k+1}}$ $\forall u\in[s,T]$ almost surely. \textcolor{black}{Define}
$\tau_{k}$ to be the first exit time of $X_{t}^{k}$ from $B_{k}$.
Thus, for $(t,x)\in[0,T]\times\mathbb{R}$, we have 
\begin{alignat}{1}
 & \biggl|\adjustlimits\sup_{\Gamma\in\mathcal{N}}\inf_{\Sigma\in\mathcal{B}}\Bigl\{\mathbb{E}^{t,x}\Bigl[\lambda_{0}\int_{t}^{t+\theta}F(\Sigma_{s})ds+\bar{v}^{k}(t+\theta,X_{t+\theta}^{k,{\scriptscriptstyle \Gamma,\Sigma}})\Bigr]\Bigr\}-\adjustlimits\sup_{\Gamma\in\mathcal{N}}\inf_{\Sigma\in\mathcal{B}}\Bigl\{\mathbb{E}^{t,x}\Bigl[\lambda_{0}\int_{t}^{t+\theta}F(\Sigma_{s})ds+\bar{v}(t+\theta,X_{t+\theta}^{{\scriptscriptstyle \Gamma,\Sigma}})\Bigr]\Bigr\}\biggr|\nonumber \\
\leq{} & \adjustlimits\sup_{\Gamma\in\mathcal{N}}\sup_{\Sigma\in\mathcal{B}}\mathbb{E}^{t,x}\biggl|\bar{v}^{k}(t+\theta,X_{t+\theta}^{k,{\scriptscriptstyle \Gamma,\Sigma}})-\bar{v}(t+\theta,X_{t+\theta}^{{\scriptscriptstyle \Gamma,\Sigma}})\biggr|\nonumber \\
\leq{} & \adjustlimits\sup_{\Gamma\in\mathcal{N}}\sup_{\Sigma\in\mathcal{B}}\mathbb{E}^{t,x}\biggl|\Bigl(\bar{v}^{k}(t+\theta,X_{t+\theta}^{k,{\scriptscriptstyle \Gamma,\Sigma}})-\bar{v}(t+\theta,X_{t+\theta}^{{\scriptscriptstyle \Gamma,\Sigma}})\Bigr)\mathbbm{1}(\tau_{k}>T)\biggr|\label{eq:convergence-1-1}\\
 & \quad+\adjustlimits\sup_{\Gamma\in\mathcal{N}}\sup_{\Sigma\in\mathcal{B}}\mathbb{E}^{t,x}\biggl|\Bigl(\bar{v}^{k}(t+\theta,X_{t+\theta}^{k,{\scriptscriptstyle \Gamma,\Sigma}})-\bar{v}(t+\theta,X_{t+\theta}^{{\scriptscriptstyle \Gamma,\Sigma}})\Bigr)\mathbbm{1}(\tau_{k}\leq T)\biggr|.\label{eq:convergence-2-1}
\end{alignat}

If $\tau_{k}>T$, the term \eqref{eq:convergence-1-1} is zero. For
the term \eqref{eq:convergence-2-1}, for any arbitrary pair $(\bar{\Gamma},\bar{\Sigma})$,
we have 
\begin{alignat}{1}
 & \left(\mathbb{E}^{t,x}\biggl|\bar{v}^{k}(t+\theta,X_{t+\theta}^{k,{\scriptscriptstyle \bar{\Gamma},\bar{\Sigma}}})-\bar{v}(t+\theta,X_{t+\theta}^{{\scriptscriptstyle \bar{\Gamma},\bar{\Sigma}}})\mathbbm{1}(\tau_{k}\leq T)\biggr|\right)^{2}\nonumber \\
\leq{} & \mathbb{E}^{t,x}\left[\biggl|\bar{v}^{k}(t+\theta,X_{t+\theta}^{k,{\scriptscriptstyle \bar{\Gamma},\bar{\Sigma}}})-\bar{v}(t+\theta,X_{t+\theta}^{{\scriptscriptstyle \bar{\Gamma},\bar{\Sigma}}})\biggr|^{2}\right]\times\mathbb{P}(\tau_{k}\leq T).\label{eq:upper bound in DPP}
\end{alignat}
Finally our task is to show that the upper bound \eqref{eq:upper bound in DPP}
converges to zero as $k$ goes to infinity.

Let $X_{T}^{k,{\scriptscriptstyle \Gamma,\Sigma}}$ be the solution
of SDE \eqref{eq:localSDE-1} starting from $(t+\theta,X_{t+\theta}^{k,{\scriptscriptstyle \bar{\Gamma},\bar{\Sigma}}})$,
and $X_{T}^{{\scriptscriptstyle \Gamma,\Sigma}}$ be the solution
of \eqref{eq:dynamic of X} starting from $(t+\theta,X_{t+\theta}^{{\scriptscriptstyle \bar{\Gamma},\bar{\Sigma}}})$,
they are controlled by $\Gamma,\Sigma$ for the time $u\in[t+\theta,T]$.

Using arguments in equations \eqref{eq:inequality_1} and \eqref{eq:inequality_2},
\begin{alignat*}{1}
 & \Bigl|\bar{v}^{k}(t+\theta,X_{t+\theta}^{k,{\scriptscriptstyle \bar{\Gamma},\bar{\Sigma}}})-\bar{v}(t+\theta,X_{t+\theta}^{{\scriptscriptstyle \bar{\Gamma},\bar{\Sigma}}})\Bigr|\\
\leq{} & \sup_{\Gamma\in\mathcal{N}}\sup_{\Sigma\in\mathcal{B}}\Bigl|\mathbb{E}^{t+\theta,X_{t+\theta}^{k,{\scriptscriptstyle \bar{\Gamma},\bar{\Sigma}}}}\Bigl[\lambda_{0}\int_{t}^{T}F(\Sigma_{s})ds+U^{k}(X_{T}^{k,{\scriptscriptstyle \Gamma,\Sigma}})\Bigr]-\mathbb{E}^{t+\theta,X_{t+\theta}^{{\scriptscriptstyle \bar{\Gamma},\bar{\Sigma}}}}\Bigl[\lambda_{0}\int_{t}^{T}F(\Sigma_{s})ds+U(X_{T}^{{\scriptscriptstyle \Gamma,\Sigma}})\Bigr]\Bigr|
\end{alignat*}
For any arbitrary controls $(\Gamma,\Sigma)$ for the time $u\in[t+\theta,T]$,
it is easy to see that 
\begin{alignat*}{1}
 & \mathbb{E}^{t,x}\Big(\Bigl|\mathbb{E}^{t+\theta,X_{t+\theta}^{k,{\scriptscriptstyle \bar{\Gamma},\bar{\Sigma}}}}\Bigl[U^{k}(X_{T}^{k,{\scriptscriptstyle \Gamma,\Sigma}})\Bigr]-\mathbb{E}^{t+\theta,X_{t+\theta}^{{\scriptscriptstyle \bar{\Gamma},\bar{\Sigma}}}}\Bigl[U(X_{T}^{{\scriptscriptstyle \Gamma,\Sigma}})\Bigr]\Bigr|\Big)\\
\leq{} & \mathbb{E}^{t,x}\Big(\mathbb{E}^{t+\theta,X_{t+\theta}^{k,{\scriptscriptstyle \bar{\Gamma},\bar{\Sigma}}}}\Bigl|U^{k}(X_{T}^{k,{\scriptscriptstyle \Gamma,\Sigma}})\Bigr|+\mathbb{E}^{t+\theta,X_{t+\theta}^{{\scriptscriptstyle \bar{\Gamma},\bar{\Sigma}}}}\Bigl|U(X_{T}^{{\scriptscriptstyle \Gamma,\Sigma}})\Bigr|\Big)\\
\leq{} & \mathbb{E}^{t,x}\Big(\mathbb{E}^{t+\theta,X_{t+\theta}^{k,{\scriptscriptstyle \bar{\Gamma},\bar{\Sigma}}}}\Bigl[C\Bigl|X_{T}^{k,{\scriptscriptstyle \Gamma,\Sigma}}\Bigr|^{2m_{1}}\Bigr]+\mathbb{E}^{t+\theta,X_{t+\theta}^{{\scriptscriptstyle \bar{\Gamma},\bar{\Sigma}}}}\Bigl[C\Bigl|X_{T}^{{\scriptscriptstyle \Gamma,\Sigma}}\Bigr|^{2m_{2}}\Bigr]\Big)\\
\leq{} & \mathbb{E}^{t,x}\Big(K_{T}\bigl(1+\Bigl|X_{t+\theta}^{k,{\scriptscriptstyle \bar{\Gamma},\bar{\Sigma}}}\Bigr|^{2m_{1}}\bigr)+K_{T}\bigl(1+\Bigl|X_{t+\theta}^{{\scriptscriptstyle \bar{\Gamma},\bar{\Sigma}}}\Bigr|^{2m_{2}}\bigr)\Big)\\
\leq{} & C_{T}\Big(1+|x|^{2m}\Bigr),
\end{alignat*}
where $C,K_{T},C_{T},m_{1},m_{2},m$ are constants. Then there exists
a polynomial $\Phi$ such that 
\begin{alignat}{1}
\mathbb{E}^{t,x}\left[\Bigl|\bar{v}^{k}(t+\theta,X_{t+\theta}^{k,{\scriptscriptstyle \Gamma,\Sigma}})-\bar{v}(t+\theta,X_{t+\theta}^{{\scriptscriptstyle \Gamma,\Sigma}})\Bigr|^{2}\right] & \leq\Phi(|x|),\label{poly estimation}
\end{alignat}

and the Markov inequality yields 
\begin{equation}
\mathbb{P}(\tau_{k}\leq T)\leq\frac{\mathbb{E}^{t,x}\Bigl[\sup_{t\leq s\leq T}\bigl|X_{s}^{{\scriptscriptstyle \Gamma,\Sigma}}\bigr|^{2}\Bigr]}{k^{2}}\leq\frac{C_{T}\left(1+x^{2}\right)}{k^{2}},\label{Markov}
\end{equation}
where $C_{T}$ is a constant independent of $k$. Therefore we have
\[
\mathbb{E}^{t,x}\biggl|(\bar{v}^{k}(t+\theta,X_{t+\theta}^{k,{\scriptscriptstyle \Gamma,\Sigma}})-\bar{v}(t+\theta,X_{t+\theta}^{{\scriptscriptstyle \Gamma,\Sigma}}))\mathbbm{1}(\tau_{k}\leq T)\biggr|\leq\frac{K(\bigl|x\bigr|)}{k},
\]
where $K(\bigl|x\bigr|)$ is a polynomial function in terms of $x$.

As $k\rightarrow\infty$, the term \eqref{eq:convergence-2-1} goes
to zero as well, therefore 
\[
\bar{v}(t,x)=\adjustlimits\sup_{\Gamma\in\mathcal{N}}\inf_{\Sigma\in\mathcal{B}}\Bigl\{\mathbb{E}^{t,x}\Bigl[\lambda_{0}\int_{t}^{t+\theta}F(\Sigma_{s})ds+\bar{v}(t+\theta,X_{t+\theta}^{{\scriptscriptstyle \Gamma,\Sigma}})\Bigr]\Bigr\},
\]
as the left and right hand sides of \eqref{eq:DPP2} converge to the
left and right hand sides of equation \eqref{eq:DPP1} respectively. 
\end{proof}

\subsection{Proof of Corollary \ref{cor:continuous in time}\label{sec:Appendix-proof of continuous in time }}
\begin{proof}
Let $X_{s}^{{\scriptscriptstyle \Gamma,\Sigma}}$ be the solution
of the SDE \eqref{eq:dynamic of X} starting from $x$ at time $t$,
controlled by $\Gamma,\Sigma$ for time $u\in[t,s]$. By the Dynamic
Programming Principle and inequality \eqref{eq:v_continuous}, for
$t<s<T,$ 
\begin{alignat*}{1}
\bigl|\bar{v}(t,x)-\bar{v}(s,x)\bigr| & =\Bigl|\adjustlimits\sup_{\Gamma\in\mathcal{N}}\inf_{\Sigma\in\mathcal{B}}\Bigl\{\mathbb{E}^{t,x}\left[\lambda_{0}\int_{t}^{s}F(\Sigma_{u})du+\bar{v}(s,X_{s}^{{\scriptscriptstyle \Gamma,\Sigma}})\right]\Bigr\}-\bar{v}(s,x)\Bigr|.
\end{alignat*}
With any arbitrary control and strategy processes $(\hat{\Sigma},\hat{\Gamma})$
for time $u\in[t,s]$, we have 
\begin{alignat}{1}
 & \Bigl|\mathbb{E}^{t,x}\left[\lambda_{0}\int_{t}^{s}F(\hat{\Sigma}_{u})du+\bar{v}(s,X_{s}^{{\scriptscriptstyle \hat{\Gamma},\hat{\Sigma}}})\right]-\bar{v}(s,x)\Bigr|\\
={} & \Bigl|\mathbb{E}^{t,x}\left[\lambda_{0}\int_{t}^{s}F(\hat{\Sigma}_{u})du\right]+\mathbb{E}^{t,x}\left[\bar{v}(s,X_{s}^{{\scriptscriptstyle \hat{\Gamma},\hat{\Sigma}}})-\bar{v}(s,x)\right]\Bigr|\label{eq:continuous in t}\\
\leq{} & \Bigl|\mathbb{E}^{t,x}\left[\lambda_{0}\int_{t}^{s}F(\hat{\Sigma}_{u})du\right]\Bigr|+\mathbb{E}^{t,x}\left[\sup_{\Gamma}\sup_{\Sigma}\left(\mathbb{E}^{s,X_{s}^{{\scriptscriptstyle \hat{\Gamma},\hat{\Sigma}}}}[U(X_{T}^{{\scriptscriptstyle \Gamma,\Sigma}})]-\mathbb{E}^{s,x}[U(X_{T}^{{\scriptscriptstyle \Gamma,\Sigma}}]\right)\right].\label{eq:continuous in t-2}
\end{alignat}
Referring to \eqref{eq:v_continuous}, there exist a polynomial function
$\Phi$ and constants $C,C_{T},m_{1},m_{2}$ for an arbitrary pair
of $(\Gamma,\Sigma)$ for time $u\in[s,T]$ such that 
\begin{alignat*}{1}
 & \mathbb{E}^{t,x}\left[\mathbb{E}^{s,X_{s}^{{\scriptscriptstyle \hat{\Gamma},\hat{\Sigma}}}}[U(X_{T}^{{\scriptscriptstyle \Gamma,\Sigma}})]-\mathbb{E}^{s,x}[U(X_{T}^{{\scriptscriptstyle \Gamma,\Sigma}})]\right]\\
\leq{} & \mathbb{E}^{t,x}\Bigl[\Phi(\left|X_{s}^{{\scriptscriptstyle \hat{\Gamma},\hat{\Sigma}}}\right|,\bigl|x\bigr|)\left|X_{s}^{{\scriptscriptstyle \hat{\Gamma},\hat{\Sigma}}}-x\right|\Bigr]\\
\leq{} & \mathbb{E}^{t,x}\left[\Phi(\left|X_{s}^{{\scriptscriptstyle \hat{\Gamma},\hat{\Sigma}}}\right|,\bigl|x\bigr|)^{2}\right]^{1/2}\mathbb{E}^{t,x}\left[\left|X_{s}^{{\scriptscriptstyle \hat{\Gamma},\hat{\Sigma}}}-x\right|^{2}\right]^{1/2}\\
\leq{} & \Bigl(Cx^{2m_{1}}+C_{T}\left(1+x^{2m_{2}}\right)\Bigr)^{1/2}\mathbb{E}^{t,x}\Bigl[\bigl|X_{s}^{{\scriptscriptstyle \hat{\Gamma},\hat{\Sigma}}}-x\bigr|^{2}\Bigr]^{1/2}.
\end{alignat*}
We know 
\[
\mathbb{E}^{t,x}\Bigl[\bigl|X_{s}^{{\scriptscriptstyle \hat{\Gamma},\hat{\Sigma}}}-x\bigr|^{2}\Bigr]\leq C_{T}(1+x^{2})(s-t).
\]
Let $\eta=\max\left\{ \bigl|F(\Sigma_{u})\bigr|:\Sigma_{u}\in B\right\} $,
therefore 
\[
\bigl|\bar{v}(t,x)-\bar{v}(s,x)\bigr|\leq\lambda_{0}\eta(s-t)+\Phi(|x|)(s-t)^{1/2}.
\]
Hence $\bar{v}(t,x)$ is H\"older continuous in $t\in[0,T]$. 
\end{proof}

\subsection{Proof of \prettyref{thm:existence-of-viscosity}\label{sec:Appendix-proof of existence of viscosity solution}}

\begin{proof}
We again make use of the localized processes $X_{t}^{k},U^{k}$ and
$\bar{v}^{k}$ from the proof of \prettyref{thm:The-Dynamic-Programming}
in \prettyref{sec:The-Dynamic-Programming}. The HJBI equation associated
with SDE \eqref{eq:localSDE-1} is 
\begin{align}
\begin{cases}
\frac{\partial v}{\partial t}(t,x)+H^{k}(t,x,\frac{\partial v}{\partial x}(t,x),\frac{\partial^{2}v}{\partial x^{2}}(t,x))=0 & \text{in }\,[0,T)\times\mathbb{R},\\
v(T,x)=U^{k}(x) & \text{on }\,[T]\times\mathbb{R},
\end{cases}\label{eq:HJBI2}
\end{align}
where 
\begin{equation}
H^{k}(t,x,p,M)=\adjustlimits\inf_{\mathbf{\Sigma}\in B}\sup_{\mathbf{a}\in A}\left\{ \lambda_{0}F(\mathbf{\Sigma})+\phi_{k}(x)(\mathbf{a}^{\intercal}\mu+r-\mathbf{a}^{\intercal}\mathbf{r})xp+\frac{1}{2}tr\left(\phi_{k}^{2}(x)\mathbf{a}^{\intercal}\mathbf{\Sigma}\mathbf{a}x^{2}M\right)\right\} .\label{eq:Hamiltonian2}
\end{equation}
All the assumptions in \citet{fleming1989existence} are satisfied,
so $\bar{v}^{k}(t,x)$ \eqref{eq:value2} is a viscosity solution
of the HJBI equation \eqref{eq:HJBI2}.

Now we introduce another value function 
\[
\tilde{v}^{k}(t,x)=\begin{cases}
\bar{v}^{k}(t,x) & \forall(t,x)\in[0,T]\times B_{k+1}\\
U(x)+\inf_{\Sigma\in\mathcal{B}}\mathbb{E}^{t,x}[\lambda_{0}\int_{t}^{T}F(\Sigma_{s})ds] & \forall(t,x)\in[0,T]\times(\mathbb{R}\backslash B_{k+1})
\end{cases}.
\]
In the first case where $x\in B_{k+1}$, we have $\left(X_{T}^{k}\right)^{2}<\left(k+2\right)^{2}$
almost surely. Therefore 
\[
\tilde{v}^{k}(t,x)=\adjustlimits\sup_{\Gamma\in\mathcal{N}}\inf_{\Sigma\in\mathcal{B}}\mathbb{E}^{t,x}[\lambda_{0}\int_{t}^{T}F(\Sigma_{s})ds+U(X_{T}^{k,{\scriptscriptstyle \Gamma,\Sigma}})],\:\:\forall(t,x)\in[0,T]\times B_{k+1}.
\]
Then $\tilde{v}^{k}(t,x),\forall(t,x)\in[0,T]\times B_{k+1}$ is a
viscosity solution of 
\begin{align}
\begin{cases}
\frac{\partial v}{\partial t}(t,x)+H^{k}(t,x,\frac{\partial v}{\partial x}(t,x),\frac{\partial^{2}v}{\partial x^{2}}(t,x))=0 & \text{in }\,[0,T)\times\mathbb{R},\\
v(T,x)=U(x) & \text{on }\,[T]\times\mathbb{R}.
\end{cases}\label{eq:HJBI3}
\end{align}
Since the drift and diffusion of $X_{t}^{k}$ are zero outside of
$B_{k+1}$, then $X_{T}^{k,{\scriptscriptstyle \Gamma,\Sigma}}=x$
for $x\in(\mathbb{R}\backslash B_{k+1})$ and 
\begin{alignat*}{1}
\tilde{v}^{k}(t,x) & =\sup_{\Gamma\in\mathcal{N}}\inf_{\Sigma\in\mathcal{B}}\mathbb{E}^{t,x}[\lambda_{0}\int_{t}^{T}F(\Sigma_{s})ds+U(X_{T}^{k,{\scriptscriptstyle \Gamma,\Sigma}})],\:\:\forall(t,x)\in[0,T]\times\left(\mathbb{R}\backslash B_{k+1}\right).
\end{alignat*}
It is easy to check that $\tilde{v}^{k}(t,x),\forall(t,x)\in[0,T]\times(\mathbb{R}\backslash B_{k+1})$
is also a viscosity solution of HJBI equation \eqref{eq:HJBI3} with
$\phi_{k}(x)=0$. Combining the two cases, we have 
\[
\tilde{v}^{k}(t,x)=\sup_{\Gamma\in\mathcal{N}}\inf_{\Sigma\in\mathcal{B}}\mathbb{E}^{t,x}[\lambda_{0}\int_{t}^{T}F(\Sigma_{s})ds+U(X_{T}^{k,{\scriptscriptstyle \Gamma,\Sigma}})],\quad\text{on }[0,T]\times\mathbb{R},
\]
and $\tilde{v}^{k}(t,x)$ is a viscosity solution of \eqref{eq:HJBI3}.

Since $H^{k}$ $\rightarrow H$ as $k\rightarrow\infty$, if we can
prove $\tilde{v}^{k}\rightarrow\bar{v}$ as $k\rightarrow\infty$,
then it shows that $\bar{v}$ is a viscosity solution of equation
\eqref{eq:HJBI1}. We will prove the convergence in the following
way: first of all, we have 
\begin{alignat*}{1}
\Bigl|\bar{v}-\tilde{v}^{k}\Bigr| & \leq\sup_{\Gamma}\sup_{\Sigma}\biggl|\mathbb{E}^{t,x}\Bigl[\lambda_{0}\int_{t}^{T}F(\Sigma_{s})ds+U(X_{T}^{{\scriptscriptstyle \Gamma,\Sigma}})\Bigr]-\mathbb{E}^{t,x}\Bigl[\lambda_{0}\int_{t}^{T}F(\Sigma_{s})ds+U(X_{T}^{k,{\scriptscriptstyle \Gamma,\Sigma}})\Bigr]\biggr|.
\end{alignat*}

For any arbitrary pair of control and strategy processes $(\Gamma,\Sigma)$,
we have 
\begin{equation}
\mathbb{E}^{t,x}\Bigl[\Big(\bigl(\lambda_{0}\int_{t}^{T}F(\Sigma_{s})ds+U(X_{T}^{{\scriptscriptstyle \Gamma,\Sigma}})\bigr)-\bigl(\lambda_{0}\int_{t}^{T}F(\Sigma_{s})ds+U(X_{T}^{k,{\scriptscriptstyle \Gamma,\Sigma}})\bigr)\Big)\mathbbm{1}(\tau_{k}\geq T)\Bigr]=0.\label{eq:vis solution before stopping time}
\end{equation}
Using Assumption \prettyref{assu:U}, we can write 
\begin{alignat}{1}
 & \biggl|\mathbb{E}^{t,x}[\lambda_{0}\int_{t}^{T}F(\Sigma_{s})ds+U(X_{T}^{{\scriptscriptstyle \Gamma,\Sigma}}])]-\mathbb{E}^{t,x}[\lambda_{0}\int_{t}^{T}F(\Sigma_{s})ds+U(X_{T}^{k,{\scriptscriptstyle \Gamma,\Sigma}})]\biggr|\nonumber \\
={} & \biggl|\mathbb{E}^{t,x}\Bigl[\bigl(U(X_{T}^{{\scriptscriptstyle \Gamma,\Sigma}})-U(X_{T}^{k,{\scriptscriptstyle \Gamma,\Sigma}})\bigr)\mathbbm{1}(\tau_{k}<T)\Bigr]\biggr|\nonumber \\
\leq{} & \mathbb{E}^{t,x}\Bigl[Q(\bigl|X_{T}^{{\scriptscriptstyle \Gamma,\Sigma}}\bigr|,\bigl|X_{T}^{k,{\scriptscriptstyle \Gamma,\Sigma}}\bigr|)(\bigl|X_{T}^{{\scriptscriptstyle \Gamma,\Sigma}}\bigr|-\bigl|X_{T}^{k,{\scriptscriptstyle \Gamma,\Sigma}}\bigr|)\mathbbm{1}(\tau_{k}<T)\Bigr].\label{eq:polynomial indicator}
\end{alignat}
Applying the Cauchy-Schwarz inequality on the upper bound \eqref{eq:polynomial indicator},
with similar arguments in \eqref{Markov}, we obtain 
\begin{alignat}{1}
 & \left(\mathbb{E}^{t,x}\Bigl[Q(\left|X_{T}^{{\scriptscriptstyle \Gamma,\Sigma}}\right|,\bigl|X_{T}^{k,{\scriptscriptstyle \Gamma,\Sigma}}\bigr|)(\bigl|X_{T}^{{\scriptscriptstyle \Gamma,\Sigma}}\bigr|-\bigl|X_{T}^{k,{\scriptscriptstyle \Gamma,\Sigma}}\bigr|)\mathbb{I}(\tau_{k}<T)\Bigr]\right){}^{2}\nonumber \\
\leq{} & C_{T}\left(1+x^{2m}\right)\times\frac{C_{T}\left(1+x^{2}\right)}{k^{2}}.\label{eq:Markov and SDE}
\end{alignat}
Hence 
\begin{equation}
\mathbb{E}^{t,x}\Bigl[Q(\left|X_{T}^{{\scriptscriptstyle \Gamma,\Sigma}}\right|,\bigl|X_{T}^{k,{\scriptscriptstyle \Gamma,\Sigma}}\bigr|)(\bigl|X_{T}^{{\scriptscriptstyle \Gamma,\Sigma}}\bigr|-\bigl|X_{T}^{k,{\scriptscriptstyle \Gamma,\Sigma}}\bigr|)\mathbbm{1}(\tau_{k}<T)\Bigr]\leq\frac{\Phi(\left|x\right|)}{k},\label{eq:HJBI estimate}
\end{equation}
where $\Phi(\left|x\right|)$ is a polynomial function independent
of $k$. Since $(\Gamma,\Sigma)$ are arbitrary, combining \eqref{eq:vis solution before stopping time},
\eqref{eq:polynomial indicator} and \eqref{eq:HJBI estimate}, we
deduce that 
\begin{alignat*}{1}
\Bigl|\bar{v}-\tilde{v}^{k}\Bigr| & \leq\frac{\Phi(\left|x\right|)}{k}.
\end{alignat*}
So $\tilde{v}^{k}$ converges to $\bar{v}$ as $k\rightarrow\infty$.
Thus $\bar{v}$ is a viscosity solution of the HJBI equation \eqref{eq:HJBI1}. 
\end{proof}

\subsection{Explicit solution of equation \eqref{eq:quartic}\label{sec:Appendix-explicit solution}}

For completeness, we express the real positive root of equation
\eqref{eq:quartic} explicitly.

Let $c=\dfrac{(\mu-r)^{2}}{2\lambda_{0}}$, the discriminant of the
equation $\Delta=-256c^{3}-27\sigma_{0}^{4}c^{2}$ is less than zero,
meaning there are two distinct real roots. It is easy to check that
there is one positive and one negative root, and the real positive
one is

\begin{gather*}
\hat{\sigma}=\frac{\sigma_{0}}{4}+\frac{1}{2}\left[\frac{\sigma_{0}^{2}}{4}+\frac{\sqrt[3]{\sqrt{3}\sqrt{27\sigma_{0}^{4}c^{2}+256c^{3}}-9\sigma_{0}^{2}c}}{\sqrt[3]{2}3^{2/3}}-\frac{4\sqrt[3]{\frac{2}{3}}c}{\sqrt[3]{\sqrt{3}\sqrt{27\sigma_{0}^{4}c^{2}+256c^{3}}-9\sigma_{0}^{2}c}}\right]^{\frac{1}{2}}\\
+\frac{1}{2}\Biggl[\frac{\sigma_{0}^{2}}{2}-\frac{\sqrt[3]{\sqrt{3}\sqrt{27\sigma_{0}^{4}c^{2}+256c^{3}}-9\sigma_{0}^{2}c}}{\sqrt[3]{2}3^{2/3}}+\frac{4\sqrt[3]{\frac{2}{3}}c}{\sqrt[3]{\sqrt{3}\sqrt{27\sigma_{0}^{4}c^{2}+256c^{3}}-9\sigma_{0}^{2}c}}\\
+\frac{\sigma_{0}^{3}}{4\sqrt{\frac{\sigma_{0}^{2}}{4}+\frac{\sqrt[3]{\sqrt{3}\sqrt{27\sigma_{0}^{4}c^{2}+256c^{3}}-9\sigma_{0}^{2}c}}{\sqrt[3]{2}3^{2/3}}-\frac{4\sqrt[3]{\frac{2}{3}}c}{\sqrt[3]{\sqrt{3}\sqrt{27\sigma_{0}^{4}c^{2}+256c^{3}}-9\sigma_{0}^{2}c}}}}\Biggr]^{\frac{1}{2}}.
\end{gather*}

\end{document}